\numberwithin{equation}{section}
\newcommand{\half}{\frac{1}{2}}
\newcommand{\eps}{\varepsilon}
\newcommand{\CC}{\mathbb {C}}
\newcommand{\Hg}{\mathscr{H}}
\newtheorem{theorem}{Theorem}[section]
\newtheorem{thm}{Theorem}[section]
\newtheorem{lemma}{Lemma}[section]
\newtheorem{corollary}{Corollary}[section]
\newtheorem*{claim}{Claim}
\newtheorem*{question}{Question}
\newtheorem*{example}{Example 1}
\newtheorem*{example2}{Example 2}
\title[Discrete Hilbert transforms on sparse sequences]
{Discrete Hilbert transforms on sparse sequences}
\author [Yurii Belov]{Yurii Belov }
\address{Department of Mathematical Sciences\\
Norwegian University of Science and Technology (NTNU)\\
 NO- 7491 Trondheim, Norway}
 \email{j\_b\_juri\_belov@mail.ru}
\author [Tesfa Y. Mengestie]{Tesfa Y. Mengestie }
\address{Department of Mathematical Sciences\\
Norwegian University of Science and Technology (NTNU)\\
 NO- 7491 Trondheim, Norway}
\email{mengesti@math.ntnu.no}
\author[Kristian Seip]{Kristian Seip}
\address{Department of Mathematical Sciences\\
Norwegian University of Science and Technology (NTNU)\\
 NO- 7491 Trondheim, Norway}
\email{seip@math.ntnu.no}
\thanks{The authors are supported by the Research Council of
Norway grant 185359/V30.} \subjclass[2000]{30E05, 46E22}
\begin{document}
\begin{abstract}
Weighted discrete Hilbert transforms $(a_n)_n \mapsto \sum_n a_n
v_n/(z-\gamma_n)$ from $\ell^2_v$ to a weighted $L^2$ space are
studied, with $\Gamma=(\gamma_n)$ a sequence of distinct points in
the complex plane and $v=(v_n)$ a corresponding sequence of positive
numbers. In the special case when $|\gamma_n|$ grows at least
exponentially, bounded transforms of this kind are described in
terms of a simple relative to the Muckenhoupt $(A_2)$ condition. The
special case when $z$ is restricted to another sequence $\Lambda$ is
studied in detail; it is shown that a bounded transform satisfying a
certain admissibility condition can be split into finitely many
surjective transforms, and precise geometric conditions are found
for invertibility of such two weight transforms. These results can
be interpreted as statements about systems of reproducing kernels in
certain Hilbert spaces of which de Branges spaces and model
subspaces of $H^2$ are prime examples. In particular, a connection
to the Feichtinger conjecture is pointed out. Descriptions of
Carleson measures and Riesz bases of normalized reproducing kernels
for certain ``small'' de Branges spaces follow from the results of
this paper.
\end{abstract}
\maketitle

\section{Introduction and main results}

This paper is concerned with the mapping properties of what we will
call discrete Hilbert transforms in the complex plane. One aspect of
this topic was treated in \cite{BMS}, where all unitary discrete
Hilbert transforms were described. When we now turn to questions
about boundedness, surjectivity, and invertibility, results of the
same generality seem at present out of reach. The results to be
presented below are complete only when the discrete Hilbert
transforms are defined on particularly sparse sequences. We will
nevertheless present the problems in the most general setting, as we
believe they merit further investigations. We will emphasize the
connection with topics such as Carleson measures and Riesz bases of
normalized reproducing kernels in Hilbert spaces of analytic
functions; this will lead us to the most intriguing general
question, namely whether or not the Feichtinger conjecture holds
true for systems of reproducing kernels in such spaces.

We begin by assuming that we are given an infinite sequence of
distinct points $\Gamma=(\gamma_n)$ in $\CC$ and a corresponding
sequence of positive numbers $v=(v_n)$, both indexed by the positive
integers. We define the weighted Hilbert transform as the map
\begin{equation} (a_n)\mapsto\sum_{n=1}^\infty \frac{a_n
v_n}{z-\gamma_n}, \label{discreteH}
\end{equation}
which is well defined when $(a_n)$ belongs to $\ell^2_v=\{ (a_n):\
\|a\|_v^2= \sum_{n=1}^\infty |a_n|^2 v_n<\infty\}$ and $z$ is a
point in the set
\[ (\Gamma, v)^{*}=\left\{ z\in \CC:\
\sum_{n=1}^\infty \frac{v_n}{|z-\gamma_n|^2}<\infty\right\}. \] We
denote the transformation defined in \eqref{discreteH} by
$H_{(\Gamma,v)}$ and ask if we may describe those nonnegative
measures $\mu$ on $(\Gamma,v)^{*}$ such that $H_{(\Gamma,v)}$ is a
bounded map from $\ell^2_v$ to $L^2((\Gamma,v)^{*},\mu)$. This
question is another version of the long-standing problem of finding
criteria akin to the Muckenhoupt $(A_2)$ condition for boundedness
of two-weight Hilbert transforms, cf. the discussion in the last
three chapters of \cite{V}.

The centerpiece of this paper is a solution to the boundedness
problem when $\Gamma$ is exponentially or super-exponentially
``sparse'', i.e., when we have
\begin{equation}
\inf_{n\geq1}|\gamma_{n+1}|/|\gamma_n|>1. \label{expon}
\end{equation}
In this case, $(\Gamma,v)^{*}$ is nonempty and in fact equal to
$\CC\setminus \Gamma$ if and only if \begin{equation}
\sum_{n=1}^\infty \frac{v_n}{1+|\gamma_n|^2} < \infty;
\label{admiss}
\end{equation} we will say that $v$ is an \emph{admissible weight sequence
for $\Gamma$} if \eqref{admiss} holds. When we consider the
boundedness problem for such sparse sequences $\Gamma$, it is quite
natural to partition $\CC$ in the following way. Set
$\Omega_1=\{z\in \CC:\ |z|<\big(|\gamma_1|+|\gamma_2|\big)/2\}$ and
then
\[
\Omega_n=\left\{z\in \mathbb{C} : \
\big(|\gamma_{n-1}|+|\gamma_n|\big)/2\le
|z|<\big(|\gamma_n|+|\gamma_{n+1}|\big)/2\right\}
 \text{ for }  n\ge 2 \label{not2}.
 \]
Our solution to the boundedness problem reads as follows.
\begin{thm} \label{th1}
Suppose that the sequence $\Gamma$ satisfies the sparseness
condition \eqref{expon} and that $v$ is an admissible weight
sequence for $\Gamma$. If $\mu$ is a nonnegative measure on $\CC$
with $\mu(\Gamma)=0$, then the map $H_{(\Gamma,v)}$ is bounded from
$\ell^2_v$ to $L^2(\mathbb{C},\mu)$ if and only if
\begin{equation}\label{trivial} \sup_{n\ge 1} \int_{\Omega_n}
\frac{v_n d\mu(z)}{|z-\gamma_n|^2} <\infty\end{equation} and
\begin{equation}
 \sup_{n\ge 1} \left(\sum_{l=1}^{n} v_l \sum_{m=n+1}^{\infty}\int_{\Omega_m}
 \frac{d\mu(z)}{|z|^{2}}+ \sum_{m=1}^n \mu\left(\Omega_m\right) \sum_{l=n+1}^\infty
 \frac{v_l}{|\gamma_l|^2}
\right) <\infty.
 \label{carl_2}
\end{equation}
\end{thm}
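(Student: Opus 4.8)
The plan is to use the sparseness \eqref{expon}, say $\abs{\gamma_{n+1}}\ge q\abs{\gamma_n}$ with $q>1$, to split the quadratic form $\int_\CC\abs{H_{(\Gamma,v)}a}^2\,d\mu$ into three essentially independent pieces adapted to the annuli $\Omega_m$. The elementary geometric consequence of \eqref{expon} that drives everything is that, for $z\in\Omega_m$, one has $\abs z\asymp\abs{\gamma_m}$, together with $\abs{z-\gamma_n}\asymp\abs z$ for $n<m$ and $\abs{z-\gamma_n}\asymp\abs{\gamma_n}$ for $n>m$, all implied constants depending only on $q$. Writing $f=H_{(\Gamma,v)}a$ and decomposing the defining series on $\Omega_m$ as $f=D_m+L_m+U_m$, where $D_m$ is the single term $n=m$, $L_m$ collects the indices $n<m$ and $U_m$ the indices $n>m$, I would analyse $\int_\CC\abs f^2\,d\mu=\sum_m\int_{\Omega_m}\abs f^2\,d\mu$ through these three contributions. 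Admissibility of $v$ is used to guarantee that the tail sums $\sum_{n>N}v_n/\abs{\gamma_n}^2$ appearing below are finite, and $\mu(\Gamma)=0$ to make the diagonal integrals meaningful.

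\emph{Sufficiency.} Using $\abs f^2\le 3(\abs{D_m}^2+\abs{L_m}^2+\abs{U_m}^2)$ I would treat the pieces one at a time. The diagonal piece is immediate, since $\sum_m\int_{\Omega_m}\abs{D_m}^2\,d\mu=\sum_m\abs{a_m}^2v_m\bigl(v_m\int_{\Omega_m}\abs{z-\gamma_m}^{-2}\,d\mu\bigr)$ is dominated by $\norm a_v^2$ exactly when \eqref{trivial} holds. For the lower piece, $\abs{z-\gamma_n}\asymp\abs z$ and Cauchy--Schwarz give $\int_{\Omega_m}\abs{L_m}^2\,d\mu\lesssim\bigl(\sum_{n<m}\abs{a_n}v_n\bigr)^2\delta_m$ with $\delta_m=\int_{\Omega_m}\abs z^{-2}\,d\mu$, and summing in $m$ reduces to the weighted discrete Hardy inequality $\sum_m\delta_m\bigl(\sum_{n<m}x_n\bigr)^2\lesssim\sum_nx_n^2/v_n$; by Muckenhoupt's criterion this holds precisely when $\sup_N\bigl(\sum_{l\le N}v_l\bigr)\bigl(\sum_{m>N}\delta_m\bigr)<\infty$, which is the first summand of \eqref{carl_2}. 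Dually, $\abs{z-\gamma_n}\asymp\abs{\gamma_n}$ yields $\int_{\Omega_m}\abs{U_m}^2\,d\mu\lesssim\mu(\Omega_m)\bigl(\sum_{n>m}\abs{a_n}v_n/\abs{\gamma_n}\bigr)^2$, and the Hardy inequality in the dual direction (summation over $n>m$) shows the sum over $m$ is controlled by $\norm a_v^2$ exactly when $\sup_N\bigl(\sum_{m\le N}\mu(\Omega_m)\bigr)\bigl(\sum_{l>N}v_l/\abs{\gamma_l}^2\bigr)<\infty$, the second summand of \eqref{carl_2}. As both summands are nonnegative, their joint finiteness is \eqref{carl_2}.

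\emph{Necessity.} Testing against $a=e_m$ forces \eqref{trivial} at once. For \eqref{carl_2} the difficulty is that the triangle inequality used above discards cancellation, so one must show the operator is genuinely large; this is the heart of the matter. Fixing $N$ and testing with $a_n\ge0$ supported on $n\le N$, I would use that for $z\in\Omega_m$ with $m>N$ every $w=\gamma_n/z$ obeys $\abs w\le 2/(1+q)<1$, whence $\re\frac{1}{1-w}\ge\frac{1-\abs w}{(1+\abs w)^2}\ge c_q>0$; therefore $\abs{f(z)}\ge\re\bigl(zf(z)\bigr)/\abs z\ge c_q\abs z^{-1}\sum_{n\le N}a_nv_n$ with no loss to cancellation. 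Integrating over $\bigcup_{m>N}\Omega_m$ and choosing $a_n\equiv1$ gives $\bigl(\sum_{l\le N}v_l\bigr)\bigl(\sum_{m>N}\delta_m\bigr)\le C$. The second half is symmetric: testing with $a_n=(\gamma_n/\abs{\gamma_n})s_n$, $s_n\ge0$, supported on $n>N$, the bound $\abs{z/\gamma_n}\le(1+q)/(2q)<1$ for $z\in\Omega_m$ with $m\le N$ and the same real-part estimate give $\abs{f(z)}\ge c_q\sum_{n>N}s_nv_n/\abs{\gamma_n}$; integrating over $\bigcup_{m\le N}\Omega_m$ and optimizing with $s_n=1/\abs{\gamma_n}$ yields $\bigl(\sum_{m\le N}\mu(\Omega_m)\bigr)\bigl(\sum_{l>N}v_l/\abs{\gamma_l}^2\bigr)\le C$.

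The main obstacle is exactly this necessity step. The crude asymptotics $\frac{1}{z-\gamma_n}\approx\frac1z$ for $n\le N$ and $\frac{1}{z-\gamma_n}\approx-\frac{1}{\gamma_n}$ for $n>N$ have a genuinely small relative error only when $q$ is large, so for $q$ close to $1$ one cannot discard the geometric tails. Passing from the modulus to a real part, combined with the sharp worst-case ratios $2/(1+q)$ and $(1+q)/(2q)$ on the neighbouring annuli $\Omega_{N+1}$ and $\Omega_N$, is what keeps the constant $c_q$ uniformly positive and makes the test-function lower bounds match the Hardy conditions above. The remaining point, that the three pieces truly decouple and that $\abs{z-\gamma_n}$ may be replaced by $\abs z$ or $\abs{\gamma_n}$ up to $q$-dependent constants in both directions, is routine from \eqref{expon}.
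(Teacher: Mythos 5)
Your proposal is correct and follows essentially the same route as the paper: the same three-way splitting of $H_{(\Gamma,v)}a$ on each annulus $\Omega_m$ (lower tail, diagonal term, upper tail) with the same geometric comparisons $|z-\gamma_n|\simeq|z|$ for $n<m$ and $|z-\gamma_n|\simeq|\gamma_n|$ for $n>m$, and the same test sequences ($a\equiv 1$ on $n\le N$ and $a_n=1/\overline{\gamma_n}$ on $n>N$) for necessity. The only differences are presentational: you invoke the classical two-weight discrete Hardy (Muckenhoupt) criterion where the paper proves the two Hardy-type inequalities by hand via duality and Cauchy--Schwarz with the $V_n^{1/2}$, $P_n^{1/2}$ weights, and you make explicit the real-part/no-cancellation lower bound that the paper leaves implicit in its ``$\gtrsim$'' estimates.
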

It should be noted that the condition is symmetric in the two
measures $\sum_n v_n \delta_{\gamma_n}$ and $\mu$. This is natural
since the theorem also gives a necessary and sufficient condition
for the adjoint transformation
\[ f\mapsto \left(\int_{\CC}
\frac{f(z) d\mu(z)}{\overline{z}-\overline{\gamma_n}}\right)_n \] to
be bounded from $L^2(\CC,\mu)$ to $\ell^2_v$. The condition
\eqref{carl_2} can be understood as a simple relative to the
classical Muckenhoupt $(A_2)$ condition.

Besides its simplicity, the main virtue of Theorem~\ref{th1} is its
role as a tool in our study of surjectivity and invertibility of
discrete Hilbert transforms. We now turn to the latter topic and
%Admittedly, the proof of
%Theorem~\ref{th1} is fairly straightforward, so that the fundamental
%difficulties inherent in the general boundedness problem are not
%really addressed under the assumption of the sparseness condition
%\eqref{expon}.
require thus that $\mu$ be a purely atomic measure. In other words,
we are interested in the case when there are a sequence of points
$\Lambda=(\lambda_j)$ in $(\Gamma,v)^{*}$ and a corresponding weight
sequence $w=(w_j)$ such that the discrete Hilbert transform
\begin{equation} (a_n)_n \mapsto \left(\sum_{n=1}^\infty \frac{a_n
v_n}{\lambda_j-\gamma_n} \right)_j \label{discreteHi}
\end{equation}
is bounded from $\ell^2_v$ to $\ell^2_w$. To stress the dependence
on the pair $(\Lambda,w)$, we will denote this transformation by
$H_{(\Gamma,v);(\Lambda,w)}$. A duality argument (see the next
section) shows that if we want $H_{(\Gamma,v);(\Lambda,w)}$ to be a
surjective map as well, then we must have \begin{equation}
\label{admissurj}
w_j=\left[\max_{\|a\|_v=1}|(H_{(\Gamma,v)}a)(\lambda_j)|\right]^{-2}=\left(\sum_{n=1}^\infty
\frac{v_n}{|\lambda_j-\gamma_n|^2}\right)^{-1},
\end{equation}
up to multiplication by a sequence of positive numbers bounded away
from $0$ and $\infty$. When $\Gamma$ and $v$ are given and $\Lambda$
is a sequence in $(\Gamma,v)^*$, we will say that the sequence given
by \eqref{admissurj} is the \emph{Bessel weight sequence for
$\Lambda$ with respect to $(\Gamma,v)$}. The translation into this
discrete setting of Theorem~\ref{th12}, to be stated in
Section~\ref{proofth12} below, is surprisingly subtle: The sequence
$\Lambda$ splits naturally into three subsequences, one that should
be viewed as a perturbation of $\Gamma$ and then two sequences
satisfying only certain ``extreme'' sparseness conditions.

Our next general question is the following: If $w=(w_j)$ is the
Bessel weight sequence for $\Lambda$ with respect to $(\Gamma,v)$
and $H_{(\Gamma,v);(\Lambda,w)}$ is a bounded transformation, is it
possible to split $\Lambda$ into a finite union of subsequences
$\Lambda'$ such that, with $w'$ denoting the subsequence of $w$
corresponding to $\Lambda'$, each of the transformations
$H_{(\Gamma,v);(\Lambda',w')}$ is surjective? As will be explained
in the next section, this question would have a positive answer
should the Feichtinger conjecture hold true. The following result
answers this question when \eqref{expon} holds.
\begin{thm} \label{th12}
Suppose the sequence $\Gamma$ satisfies the sparseness condition
\eqref{expon} and that $v$ is an admissible weight sequence for
$\Gamma$. If $\Lambda$ is a sequence in $\CC\setminus\Gamma$, $w$ is
the Bessel weight sequence for $\Lambda$ with respect to
$(\Gamma,v)$, and the transformation $H_{(\Gamma,v);(\Lambda,w)}$ is
bounded, then $\Lambda$ admits a splitting into a finite union of
subsequences such that, for each subsequence $\Lambda'$ and
corresponding subsequence $w'$ of $w$, the transformation
$H_{(\Gamma,v);(\Lambda',w')}$ is surjective.
\end{thm}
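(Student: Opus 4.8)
The plan is to pass to the reproducing-kernel formulation and then establish the required splitting as an instance of the Feichtinger conjecture made effective by the sparseness of $\Gamma$. For $\lambda\in\CC\setminus\Gamma$ let $k_\lambda\in\ell^2_v$ be the element with $n$th coordinate $(\overline\lambda-\overline{\gamma_n})^{-1}$, so that $(H_{(\Gamma,v)}a)(\lambda)=\innerprod{a}{k_\lambda}_v$ and $\norm{k_{\lambda_j}}_v^2=w_j^{-1}$; write $\kappa_j=w_j^{1/2}k_{\lambda_j}$ for the normalized kernel. Then $H_{(\Gamma,v);(\Lambda,w)}$ is unitarily equivalent to the analysis operator $a\mapsto(\innerprod{a}{\kappa_j}_v)_j$, so that boundedness is precisely the statement that $(\kappa_j)$ is a Bessel sequence, while surjectivity of $H_{(\Gamma,v);(\Lambda',w')}$ is equivalent to the synthesis operator $c\mapsto\sum_j c_j\kappa_j$ being bounded below, i.e. to $(\kappa_j)_{\lambda_j\in\Lambda'}$ being a Riesz sequence. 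The theorem thus asserts that a Bessel sequence of normalized reproducing kernels decomposes into finitely many Riesz sequences, and the task is to prove this directly using Theorem~\ref{th1}.

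The splitting is organized by the partition $(\Omega_n)$. First I would record the two consequences of boundedness furnished by Theorem~\ref{th1}: for the atomic measure $\mu=\sum_j w_j\delta_{\lambda_j}$, condition \eqref{trivial} says that the quantities $v_n w_j/|\lambda_j-\gamma_n|^2$, each of which is at most $1$, have uniformly bounded sum over $\lambda_j\in\Omega_n$, while condition \eqref{carl_2} controls the coupling of $\Lambda$-mass in high annuli with $\Gamma$-mass in low annuli and vice versa. Call $\lambda_j\in\Omega_n$ a \emph{perturbation point} if the defining sum for $w_j^{-1}$ is dominated by its $n$th term; for such points $\kappa_j$ concentrates on the $n$th coordinate. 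By \eqref{trivial} each annulus carries at most a fixed number $C$ of perturbation points, so this family splits into at most $C$ subsequences each meeting every annulus at most once. For such a subsequence the Gram matrix $\innerprod{\kappa_j}{\kappa_i}_v$ is a small perturbation of the identity, since distinct annuli give nearly orthogonal kernels because of the exponential gaps \eqref{expon}; a Gershgorin/Schur estimate then yields the Riesz lower bound, hence surjectivity.

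The remaining points are \emph{extreme}: their kernel norm is not dominated by the nearest $\gamma_n$, so the mass of $w_j^{-1}=\norm{k_{\lambda_j}}_v^2$ comes predominantly either from indices below $n$ or from indices above $n$. This dichotomy produces the two extreme subsequences, each governed by one of the two sums in \eqref{carl_2}, and that sum forces the points of a single extreme family to be extraordinarily sparse. Using \eqref{expon} and \eqref{carl_2} I would estimate the off-diagonal Gram entries
\[
|\innerprod{\kappa_j}{\kappa_i}_v|\le w_i^{1/2}w_j^{1/2}\sum_n\frac{v_n}{|\lambda_i-\gamma_n|\,|\lambda_j-\gamma_n|},
\]
showing geometric decay in the annulus-separation once the points are far apart. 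A final partition into a bounded number of subsequences — a colouring that separates the finitely many near neighbours of each point — then makes each resulting Gram matrix strictly diagonally dominant, giving the Riesz lower bound and surjectivity for each piece. Combining the at most $C$ perturbation subsequences with the bounded number of extreme subsequences yields the desired finite splitting.

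The main obstacle is the off-diagonal Gram estimate for the extreme family: the factor $(\lambda_i-\gamma_n)^{-1}(\overline{\lambda_j}-\overline{\gamma_n})^{-1}$ does not telescope through a partial-fraction identity because of the conjugation, so the decay must be extracted by splitting the sum over $n$ at the indices of $\lambda_i$ and $\lambda_j$ and balancing the three resulting ranges against the exponential gaps and the admissibility of $v$. Quantifying how the sparseness encoded in \eqref{carl_2} translates into a uniform bound on the number of colours needed — so that the total number of subsequences is finite and independent of $\Lambda$ — is the other delicate point.
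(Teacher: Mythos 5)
Your framework is the paper's: the passage to normalized reproducing kernels, the use of Lemma~\ref{duality} to convert surjectivity into a lower bound for the synthesis (adjoint) operator, and your perturbation/extreme trichotomy are exactly the splitting $\Lambda^{(0)}$, $\Lambda^{(V)}$, $\Lambda^{(P)}$ of Theorem~\ref{thm3}. The genuine gap is in the mechanism you propose for the Riesz lower bound, and it already breaks on the perturbation family, which you treat as the easy case. It is simply not true that perturbation points in distinct annuli give nearly orthogonal kernels, nor that Gram entries decay geometrically in the annulus separation. Take $v_n\equiv 1$, $\gamma_n=e^n$, $\lambda_n=\gamma_n+e^n/n$. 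Then there is exactly one point per annulus, each is a perturbation point with $w_n\simeq e^{2n}/n^2$, and $H_{(\Gamma,v);(\Lambda,w)}$ is bounded (condition \eqref{carl_3} holds, since $V_n\sum_{m\ge n}m^{-2}\lesssim 1$ and $P_n\sum_{m\le n}e^{2m}m^{-2}\lesssim n^{-2}$). Yet for $i<j$ the sum $\innerprod{\kappa_i}{\kappa_j}_{v}= w_i^{1/2}w_j^{1/2}\sum_l v_l(\lambda_i-\gamma_l)^{-1}(\lambda_j-\gamma_l)^{-1}$ is dominated by the terms $l\le i$, each of which contributes about $1/(ij)$ except $l=i$ which contributes about $1/j$, so that
\[
\innerprod{\kappa_i}{\kappa_j}_{v}\simeq \frac1{\max(i,j)}\qquad (i\neq j).
\]
There is no decay at all in $|i-j|$; all entries in column $j$ have the same size, the row sums $\sum_{i>j}1/i$ even diverge, and every pair interacts at the same strength, so neither Gershgorin/Schur dominance nor any colouring that ``separates the finitely many near neighbours of each point'' can succeed. (The Gram matrix is still bounded, being dominated by a Hilbert-type matrix $1/(i+j)$ --- which is exactly why Bessel does not self-improve to diagonal dominance.) What does work here is taking every $N$-th annulus, with $N$ determined by the Bessel constant: a global selection keyed to the weights, not a local geometric one.

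This is precisely why the paper's proof never forms a Gram matrix. It estimates the synthesis operator from below directly, as in \eqref{identical}, and then must make the two error sums \eqref{first11} and \eqref{second22} smaller than $\eps\|a\|_{w'}^2$ --- not merely bounded, which is all that \eqref{carl_2} gives. This is achieved by an algorithm sensitive to the cumulative weights $W_n$ and $Q_n$ of \eqref{WQ}: select every $N$-th point along the subsequence where $w_n>\delta W_n$; elsewhere group points into blocks with relative weight $\delta\le\sum w_{n_j}W_{n_j}^{-1}<2\delta$ and take every $N$-th block (and symmetrically for $Q_n$), with $\delta\simeq\eps$ and $N\simeq 1/(\delta\eps)$, so the number of pieces comes out of this $\eps$--$\delta$--$N$ bookkeeping rather than from any bounded ``chromatic number''. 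The extreme families require a further idea your sketch misses: for a point of $\Lambda^{(V)}$ or $\Lambda^{(P)}$ the kernel's mass is not carried by any single coordinate, so even the ``diagonal'' term must be assembled by summing over pairwise disjoint unions of annuli $\Delta_j$ chosen so that $\sum_{\gamma_n\in\Delta_j}v_n|\lambda_{n_j}-\gamma_n|^{-2}$ captures a fixed fraction of $w_{n_j}^{-1}$; only then do the quadratic-form estimates close. So the obstacles you flag at the end are real, but the main one sits where you expected none, and the tools you propose for all three families do not suffice.
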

We proceed now to our third main result, which is a general
statement about invertible discrete Hilbert transforms. The
observation that leads to this result, is that the inverse
transformation, if it exists, can be identified effectively as
another discrete Hilbert transform.

To make a precise statement, we introduce the following terminology.
We say that a sequence $\Lambda$ of distinct points in
$(\Gamma,v)^*$ is a \emph{uniqueness sequence for $H_{(\Gamma,v)}$}
if there is no nonzero vector $a$ in $\ell^2_v$ such that
$H_{(\Gamma,v)}a$ vanishes on $\Lambda$; we say that $\Lambda$ is an
\emph{exact uniqueness sequence for $H_{(\Gamma,v)}$} if it is a
uniqueness sequence for $H_{(\Gamma,v)}$, but fails to be so on the
removal of any one of the points in $\Lambda$. If $\Lambda$ is an
exact uniqueness sequence for $H_{(\Gamma,v)}$, then we say that a
nontrivial function $G$ defined on $(\Gamma,v)^*$ is a
\emph{generating function for $\Lambda$} if $G$ vanishes on
$\Lambda$ but, for every $\lambda_j$ in $\Lambda$, there is a
nonzero vector $a^{(j)}$ in $\ell^2_v$ such that
$G(z)=(z-\lambda_j)H_{(\Gamma,v)}a^{(j)}(z)$ for every $z$ in
$(\Gamma,v)^*$. It is clear that if a generating function exists,
then it is unique up to multiplication by a nonzero constant.

We note that if $\Lambda$ is an exact uniqueness sequence for
$H_{(\Gamma,v)}$, then there exists a unique element $e=(e_n)$ in
$\ell^2_v$ such that $H_{(\Gamma,v);(\Lambda,w)}e=(1,0,0,...)$. We
set $\nu=(\nu_n)$ and $\varpi=(\varpi_j)$, where \begin{equation}
\label{nu} \nu_n=v_n |\lambda_1-\gamma_n|^2|e_n|^2,  \end{equation}
$\varpi_1=w_1^{-1}$, and
\begin{equation}\label{varpi}
\varpi_j=w_j^{-1}|\lambda_j-\lambda_1|^{-2} \left| \sum_{n=1}^\infty
\frac{e_n v_n}{(\lambda_j-\gamma_n)^2}\right|^{-2},\end{equation}
presuming the series appearing in the latter expression converges
absolutely. We will see that, plainly, we have absolute convergence
of this series whenever $\Lambda$ admits a generating function.

Our next result reads as follows.

\begin{thm} \label{th13}
Suppose that every exact uniqueness sequence for $H_{(\Gamma,v)}$
admits a generating function. Let $\Lambda$ be a sequence in
$(\Gamma,v)^*$, and let $w$ be the Bessel weight sequence for
$\Lambda$ with respect to $(\Gamma,v)$. If the transformation
$H_{(\Gamma,v);(\Lambda,w)}$ is bounded, then
$H_{(\Gamma,v);(\Lambda,w)}$ is an invertible transformation if and
only if $\Lambda$ is an exact uniqueness sequence for
$H_{(\Gamma,v)}$ and the transformation
$H_{(\Lambda,\varpi);(\Gamma,\nu)}$ is bounded.
\end{thm}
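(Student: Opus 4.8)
The plan is to reformulate everything in terms of reproducing kernels and then to exhibit the inverse of $H_{(\Gamma,v);(\Lambda,w)}$ explicitly as the transform $H_{(\Lambda,\varpi);(\Gamma,\nu)}$, conjugated by two diagonal isomorphisms. Write $T=H_{(\Gamma,v);(\Lambda,w)}$ and, for $z\in(\Gamma,v)^*$, let $k_z$ be the vector with $n$th entry $1/\overline{(z-\gamma_n)}$, so that $(H_{(\Gamma,v)}a)(z)=\innerprod{a}{k_z}_v$ and, by \eqref{admissurj}, $w_j=\|k_{\lambda_j}\|_v^{-2}$. Thus $(Ta)_j=\innerprod{a}{k_{\lambda_j}}_v$, and $T$ is injective exactly when $\Lambda$ is a uniqueness sequence for $H_{(\Gamma,v)}$. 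Since $T$ is bounded, the open mapping theorem shows that $T$ is invertible if and only if it is injective and surjective; I would first record that surjectivity forces \emph{exactness}, since solving $Ta=\delta_j$ produces a nonzero $a$ with $H_{(\Gamma,v)}a$ vanishing on $\Lambda\setminus\{\lambda_j\}$, so $\Lambda$ cannot remain a uniqueness sequence after deleting $\lambda_j$. Injectivity is in turn immediate from the uniqueness hypothesis, so the whole problem reduces to matching surjectivity (with bounded inverse) to boundedness of $S:=H_{(\Lambda,\varpi);(\Gamma,\nu)}$.

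Granting exactness, the standing hypothesis supplies a generating function $G$ for $\Lambda$. For each $j$ the identity $G(z)=(z-\lambda_j)(H_{(\Gamma,v)}a^{(j)})(z)$ together with the simple-pole structure of $H_{(\Gamma,v)}a^{(j)}$ at the points of $\Gamma$ forces the residue $R_n:=\operatorname{Res}_{\gamma_n}G$ to satisfy $a^{(j)}_n v_n=R_n/(\gamma_n-\lambda_j)$; in particular $R_n$ does not depend on $j$. Evaluating $H_{(\Gamma,v)}a^{(j)}$ on $\Lambda$ gives $\innerprod{a^{(j)}}{k_{\lambda_i}}_v=\delta_{ij}\,G'(\lambda_j)$, where $G'(\lambda_j)\neq0$ because a double zero of $G$ at $\lambda_j$ would contradict uniqueness. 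Hence $h_j:=a^{(j)}/G'(\lambda_j)$ is biorthogonal to $(k_{\lambda_i})$, and the element $e$ of \eqref{nu} is precisely $h_1$. Writing $(H_{(\Gamma,v)}e)(z)=G(z)/\big((z-\lambda_1)G'(\lambda_1)\big)$ and differentiating the defining series term by term, I would evaluate the sum in \eqref{varpi} as $\sum_n e_n v_n/(\lambda_j-\gamma_n)^2=-G'(\lambda_j)/\big((\lambda_j-\lambda_1)G'(\lambda_1)\big)$. Substituting into \eqref{nu} and \eqref{varpi} then yields the clean identities $\nu_n=|R_n|^2/\big(v_n|G'(\lambda_1)|^2\big)$ and $\varpi_j=|G'(\lambda_1)|^2/\big(w_j|G'(\lambda_j)|^2\big)$.

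With these identities in hand I would define the candidate inverse to be the synthesis operator $Pc:=\sum_j c_j h_j$ against the biorthogonal system. For a finitely supported $c$, setting $b_j=c_j/\big(G'(\lambda_j)\varpi_j\big)$, I would use $a^{(j)}_n=R_n/\big(v_n(\gamma_n-\lambda_j)\big)$ to compute that the $n$th coordinate of $Pc$ equals $(R_n/v_n)(Sb)_n$. The formulas for $\nu$ and $\varpi$ are exactly what make the two changes of variable diagonal \emph{isometries up to one universal constant}: one checks $\|c\|_w=|G'(\lambda_1)|\,\|b\|_\varpi$ and $\|Pc\|_v=|G'(\lambda_1)|\,\|Sb\|_\nu$. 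Consequently $P$ extends to a bounded operator $\ell^2_w\to\ell^2_v$ if and only if $S$ is bounded, with $\|P\|=\|S\|$. The proof then closes by purely algebraic bookkeeping: $Th_j=\delta_j$ gives $TP=\id$ on finitely supported sequences and hence, by continuity, on all of $\ell^2_w$, while injectivity of $T$ upgrades this to $PT=\id$, so $P=T^{-1}$. Thus if $T$ is invertible then $T^{-1}=P$ is bounded and $S$ is bounded, while if $\Lambda$ is exact and $S$ is bounded then $P$ is bounded and furnishes a two-sided inverse of $T$.

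The main obstacle is the second paragraph: justifying rigorously that $G$ is meromorphic on $(\Gamma,v)^*$ with simple poles exactly at $\Gamma$ and residues $R_n$ tying together all the $a^{(j)}$, and that the series for $H_{(\Gamma,v)}e$ may be differentiated term by term at the points $\lambda_j$ to produce the value of the sum in \eqref{varpi}. Both rest on the absolute convergence guaranteed by the existence of a generating function, but the bookkeeping that turns \eqref{nu} and \eqref{varpi} into the two isometry relations above---so that boundedness of the abstract inverse $P$ becomes boundedness of the concrete transform $S$---is where the arithmetic must be done with care.
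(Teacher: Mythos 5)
Your proposal is correct and takes essentially the same route as the paper: both construct the biorthogonal system from the generating function $G$ (your $h_j$ are the paper's $e^{(j)}$, your $e=h_1$ is the paper's $e$), identify the inverse as the synthesis operator given on finitely supported sequences by an explicit formula, and recognize it, via the diagonal changes of variables encoded exactly in \eqref{nu} and \eqref{varpi}, as the transform $H_{(\Lambda,\varpi);(\Gamma,\nu)}$ up to isometries, so that boundedness of the inverse is equivalent to boundedness of that transform (your residue and $G'$ identities are precisely the paper's \eqref{PsiPhi}). The one obstacle you flag---meromorphy of $G$ and term-by-term differentiation---is dispatched in the paper without any analytic continuation: one subtracts the two absolutely convergent series defining $G(z)$ and $G(\lambda_j)$ to get $G(z)/(z-\lambda_j)=\sum_n e_n v_n(\gamma_n-\lambda_1)/\big((\gamma_n-\lambda_j)(z-\gamma_n)\big)$, and then the decomposition $\lambda_1-\gamma_n=(\lambda_1-\lambda_j)+(\lambda_j-\gamma_n)$ together with $H_{(\Gamma,v)}e(\lambda_j)=0$ yields your derivative identity, with absolute convergence of the series in \eqref{varpi} following from Cauchy--Schwarz and the $\ell^2_v$-membership guaranteed by the generating-function hypothesis.
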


Note that when we write `$H_{(\Lambda,\varpi);(\Gamma,\nu)}$ is
bounded', it is implicitly understood that $\Gamma\subset (\Lambda,
\varpi)^*$.

We may observe that if $\gamma_n \to\infty$ when $n\to\infty$, then
the function
\begin{equation}\label{Phi} \Phi(z)=(z-\lambda_1)\sum_{n=1}^\infty
\frac{e_n v_n}{z-\gamma_n},
\end{equation}
and its reciprocal $\Psi=1/\Phi$ are meromorphic functions in $\CC$,
and $\Phi$ is then the generating function for $\Lambda$. We may
then rewrite the expressions for $\nu$ and $\varpi$ as
\begin{equation}\label{PsiPhi} \nu_n=v_n |\Psi'(\gamma_n)|^{-2} \ \ \ \text{and} \
\ \ \varpi_j=w_j^{-1} |\Phi'(\lambda_j)|^{-2}.\end{equation}

Combining Theorem~\ref{th13} with Theorem~\ref{th1}, we will obtain
computable and geometric invertibility criteria when $\Gamma$ is a
sparse sequence as defined by \eqref{expon}. To illustrate the
nature of these criteria, we highlight the following concrete
example, where it is tacitly assumed that \eqref{expon} holds and
that $w=(w_n)$ is the Bessel weight sequence for $\Lambda$ with
respect to $(\Gamma,v)$.

\begin{example} Set $V_n=\sum_{m=1}^{n-1}v_m$ for $n>1$. Assume that
both $v_n=o(V_n)$ and $V_n\to \infty$ when $n\to \infty$, and write
$\Gamma=(\gamma_n)$ and $\Lambda=(\lambda_n)$, with both sequences
indexed by the positive integers. Moreover, assume that there exists
a positive constant $C$ such that
\begin{equation} \label{besselex}
\frac{|\gamma_n-\lambda_n|}{|\gamma_n|} \le C\, \frac{v_n}{V_n}
\end{equation}
for every positive integer $n$.
\begin{itemize}
\item[(0)] If, in addition, there is a real
constant $c<1/2$ such that
\[%\begin{equation}%\label{example}
\frac{|\gamma_n|}{|\lambda_n|} - 1 \le c \, \frac{v_n}{V_n}
\]%\end{equation}
for all sufficiently large $n$, then $H_{(\Gamma,v);(\Lambda,w)}$ is
an invertible transformation.
\item[(1)] If, on the other hand, there is a positive
constant $c >1/2$ such that
\[%\begin{equation}\label{exampledef}
\frac{|\gamma_n|}{|\lambda_n|} - 1 \ge c \, \frac{v_n}{V_n}
\]%\end{equation}
for all sufficiently large $n$, then
$H_{(\Gamma,v);(\Lambda^{(1)},w^{(1)})}$ is an invertible
transformation, where $\Lambda^{(1)}=(\lambda_2, \lambda_3,...)$ and
$w^{(1)}=(w_2,w_3,...)$.
\end{itemize}
\end{example}

It follows from \eqref{besselex} that $H_{(\Gamma,v);(\Lambda,w)}$
is a bounded transformation, while the respective conditions in (0)
and (1) imply that the inverse transformations are bounded, subject
to the proviso that, when (1) holds, one point be removed from
$\Lambda$. This rather puzzling example can be seen as an analogue
of the Kadets $1/4$ theorem for complex exponentials \cite{Kad}, cf.
the discussion in the next section. We note that if we have the
precise relation
\[
\frac{|\gamma_n|}{|\lambda_n|} - 1 = \half\, \frac{v_n}{V_n},
\]%\end{equation}
then neither $H_{(\Gamma,v);(\Lambda,w)}$ nor
$H_{(\Gamma,v);(\Lambda^{(1)},w^{(1)})}$ is an invertible
transformation. Another curious point is that if we replace the
condition that $V_n\to \infty$ by the assumption that $\sup_n
V_n<\infty$, then \eqref{besselex} automatically implies that
$H_{(\Gamma,v);(\Lambda,w)}$ is an invertible transformation.

An interesting feature of our results for sparse sequences is that
invertibility implies that $\Lambda$ is a perturbation of $\Gamma$,
in a sense to be made precise. As a consequence, we will see that
there may exist bounded transformations $H_{(\Gamma,v);(\Lambda,w)}$
such that no infinite subsequence $\Lambda'$ of $\Lambda$ is also a
subsequence of another sequence $\Lambda''$ for which the associated
Hilbert transform is invertible.

Before we turn to the proofs of our three main theorems and a
detailed discussion of further results as alluded to above, we will
place our study in context by explaining in detail how to translate
our problems and findings into statements about systems of
reproducing kernels. We are particularly interested in such systems
in the distinguished case when the underlying Hilbert space is a de
Branges spaces or, more generally, a model subspace of $H^2$. The
study of such systems has a long history, beginning with the work of
Paley and Wiener on systems of nonharmonic Fourier series, and is
related to a number of interesting applications. More recently, the
advent of the Feichtinger conjecture has given additional impetus to
the subject; the special case of the conjecture pertaining to
discrete Hilbert transforms appears as an interesting setting in
which the ramifications of the general Feichtinger conjecture could
be explored.

The present investigation originated in questions raised about
systems of reproducing kernels. During the course of our work, we
have found it both useful and conceptually appealing to transform
the subject into a study of the mapping properties of discrete
Hilbert transforms. We have learned to appreciate that the essential
difficulties thus seem to appear in a more succinct form.

We close this introduction with a few words on notation. Throughout
this paper, the notation $U(z)\lesssim V(z)$ (or equivalently
$V(z)\gtrsim U(z)$) means that there is a constant $C$ such that
$U(z)\leq CV(z)$ holds for all $z$ in the set in question, which may
be a Hilbert space, a set of complex numbers, or a suitable index
set. We write $U(z)\simeq V(z)$ if both $U(z)\lesssim V(z)$ and
$V(z)\lesssim U(z)$. As above, a sequence $\Gamma=(\gamma_n)$ of
distinct complex numbers will frequently be viewed as a subset of
$\CC$. Sometimes we will need to remove a single point, say
$\gamma_m$, from such a sequence. The sequence thus obtained will
then be written $\Gamma\setminus \{\gamma_m\}$, where $\{\gamma_m\}$
denotes the set consisting of the single point $\gamma_m$.

\section{Translation into statements about systems of reproducing kernels}

%We will now translate our problems and main results into statements
%about systems of reproducing kernels, with particular emphasis on
%applications to de Branges spaces and model subspaces of the Hardy
%space $H^2$.

\subsection{A class of Hilbert spaces}\label{Hspace}

Let $\Hg$ be a Hilbert space of complex-valued functions defined on
some set $\Omega$ in $\CC$. We will say that a sequence $\Lambda$ of
distinct points in $\Omega$ is a \emph{uniqueness sequence for}
$\Hg$ if no nonzero function in $\Hg$ vanishes on $\Lambda$; we say
that $\Lambda$ is an \emph{exact uniqueness sequence for} $\Hg$ if
it is a  \emph{uniqueness sequence for} $\Hg$, but fails to be so on
the removal of any one of the points in $\Lambda$.  If $\Lambda$ is
an exact uniqueness sequence for $\Hg$, then we say that a
nontrivial function $G$ defined on $\Omega$ is a \emph{generating
function for $\Lambda$} if $G$ vanishes on $\Lambda$ but, for every
$\lambda_j$ in $\Lambda$, there is a nonzero function $g_j$ in $\Hg$
such that $G(z)=(z-\lambda_j)g_j(z)$ for every $z$ in $\Omega$. It
is clear that if a generating function exists, it is unique up to
multiplication by a nonzero constant.

We will assume that $\Hg$ satisfies the following three axioms:
\begin{itemize}
\item[(A1)] $\Hg$ has a
reproducing kernel $\kappa_{\lambda}$ at every point $\lambda$ in
$\Omega$, i.e., the point evaluation functional $\kappa_\lambda:
f\rightarrow f(\lambda)$ is continuous in $\Hg$ for every $\lambda$
in $\Omega$.
\item[(A2)]Every exact uniqueness sequence for $\Hg$ admits a generating function.
\item[(A3)] There exists a sequence of distinct points $\Gamma=(\gamma_n)$ in
$\Omega$ such that the sequence of normalized reproducing kernels
$\big(\kappa_{\gamma_n}/\|\kappa_{\gamma_n}\|_{\Hg}\big)$
constitutes a Riesz basis for $\Hg$. In addition, there is at least
one point $z$ in $\Omega\setminus \Gamma$ for which $\kappa_{z}\neq
0$.
\end{itemize}

The second axiom (A2) may be viewed as a weak statement about the
possibility of dividing out zeros. To see this, we may observe that
(A2) holds trivially if $\Hg$ has the property that whenever
$f(\lambda)=0$ for some $f$ in $\Hg$ and $\lambda$ in $\Omega$, we
have that $f(z)/(z-\lambda)$ also belongs to $\Hg$. On the other
hand, (A2) and (A3) lead to a representation of functions in $\Hg$
(see below) which shows that if $\lambda$ is a point in
$\Omega\setminus\overline{\Gamma}$ such that $\kappa_\lambda\neq 0$,
then $f(z)/(z-\lambda)$ is in $\Hg$ whenever $f$ is in $\Hg$ and
$f(\lambda)=0$. In general, however, this division property need not
hold at the accumulation points of $\Gamma$ when we only assume
(A2).

The Riesz basis $(\kappa_{\gamma_n}/\|\kappa_{\gamma_n}\|_{\Hg})$
has a biorthogonal basis, which we will call $(g_n)$. By axiom (A2),
we may write $G(z)=c_n (z-\gamma_n)g_n(z)$ for some nonzero constant
$c_n$. We use the suggestive notation $G'(\gamma_n)$ for the value
of $G(z)/(z-\gamma_n)$ at $\gamma_n$. The sequence $g_n$ is also a
Riesz basis for $\Hg$, and therefore every vector $h$ in $\Hg$ can
be written as
\begin{equation} h(z)=\sum_{n}
h(\gamma_n)\frac{G(z)}{G'(\gamma_n)(z-\gamma_n)},
\label{lagrange}\end{equation} where the sum converges with respect
to the norm of $\Hg$ and
\[ \|h\|_{\Hg}^2\simeq \sum_n \frac{|h(\gamma_n)|^2}{\|\kappa_{\gamma_n}\|_{\Hg}^2}
<\infty; \] since point evaluation at every point $z$ is a bounded
linear functional, \eqref{lagrange} also converges pointwise in
$\Omega\setminus\Gamma$. By the assumption that $h\mapsto
(h(\gamma_n)/\|\kappa_{\gamma_n}\|_{\Hg})$ is a bijective map from
$\Hg$ to $\ell^2$, this means that \begin{equation} \sum_n
\frac{\|\kappa_{\gamma_n}\|_{\Hg}^2}{|G'(\gamma_n)|^2|z-\gamma_n|^2}<\infty
\label{pointwise}
\end{equation}
whenever $z$ is in $\Omega\setminus\Gamma$. By the last part of
axiom (A3), there is at least one such $z$ in $\Omega\setminus
\Gamma$. Therefore, \eqref{pointwise} implies that
\begin{equation} \sum_{n} \frac{v_n}{1+|\gamma_n|^2}<\infty.
\label{adm}
\end{equation}

We may now change our viewpoint: Given a sequence of distinct
complex numbers $\Gamma=(\gamma_n)$ and a weight sequence $v=(v_n)$
that satisfy the admissibility condition \eqref{adm}, we introduce
the space $\Hg(\Gamma,v)$ consisting of all functions
\[ f(z)=\sum_{n=1}^\infty\frac{a_n v_n}{z-\gamma_n} \]
for which
\[ \|f\|_{\Hg(\Gamma,v)}^2=\sum_{n=1}^\infty |a_n|^2v_n <\infty,\]
assuming that the set $(\Gamma,v)^*$ is nonempty. Thus we obtain the
value of a function $f$ in $\Hg(\Gamma, v)$ at a point $z$ in
$(\Gamma, v)^{*}$ by computing a discrete Hilbert transform.

We say that a nonnegative measure $\mu$ on $(\Gamma,v)^{*}$ is a
\emph{Carleson measure for} $\mathscr{H}(\Gamma,v)$ if there is a
positive constant $C$ such that
\[\int_{(\Gamma,v)^*}|f(z)|^2d\mu(z)\le C \|f\|^2_\mathscr{H}\]
holds for every $f$ in $\mathscr{H}(\Gamma,v)$. It is now immediate
that $\mu$ is a Carleson measure for $\mathscr{H}(\Gamma,v)$ if and
only if the map $H_{(\Gamma,v)}$ is bounded from $\ell^2_v$ to
$L^2((\Gamma,v)^*,\mu)$.

\subsection{Systems of reproducing kernels in
$\mathscr{H}(\Gamma,v)$}\label{systems}

We begin again with some generalities. Let $\mathcal{H}$ be a
complex Hilbert space and $(f_j)$ a sequence of unit vectors in
$\mathcal{H}$. We say that $(f_j)$ is a \emph{Bessel sequence in
$\mathcal{H}$} if there is a positive constant $C$ such that the
inequality
$$ \sum_j |\langle f, f_j\rangle_{\mathcal{H}}|^2 \leq C
\|f\|_{\mathcal{H}}^2$$ holds for every $f$ in $\mathcal{H}$. The
sequence $(f_j)$ is a \emph{Riesz basic sequence in $\mathcal{H}$}
if there are positive constants $A$ and $B$ such that the
inequalities
\begin{equation} \label{riesz} A \sum_j |c_j|^2 \leq \Big \| \sum_j
c_j f_j\Big \|_{\mathcal{H}}^2 \leq B \sum_n |c_n|^2 \end{equation}
hold for every finite sequence of scalars $(c_j).$ The Feichtinger
conjecture claims that \emph{every Bessel sequence of unit vectors
can be expressed as a finite union of Riesz basic sequences.}

In an intriguing series of papers \cite{PG1, PG2, PG4, PG5},  it has
been revealed that the Feichtinger conjecture is equivalent to the
long-standing Kadison--Singer conjecture \cite{KS}. We refer to the
recent paper \cite{PG3} for a historical account and an interesting
reformulation of the Feichtinger conjecture.

Before turning to the special case of normalized reproducing kernels
for $\Hg(\Gamma, v)$, we record the following consequence of the
open mapping theorem \cite[p. 73]{WR}.

\begin{lemma} \label{duality}
Suppose $T$ is a bounded linear transformation from a Hilbert space
${\mathcal H}_1$ to another Hilbert space ${\mathcal H}_2$. Then $T$
is surjective if and only if the adjoint transformation $T^*$ is
bounded below.
\end{lemma}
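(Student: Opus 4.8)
The plan is to prove the standard functional-analytic fact that a bounded linear map $T$ is surjective if and only if its adjoint $T^*$ is bounded below. Recall that $T^*$ being \emph{bounded below} means there exists a constant $c>0$ such that $\|T^* y\|_{\mathcal H_2}\ge c\,\|y\|_{\mathcal H_2}$ for every $y$ in $\mathcal H_2$ (identifying $\mathcal H_1,\mathcal H_2$ with their duals via the Riesz representation, so that $T^*\colon\mathcal H_2\to\mathcal H_1$). The two directions will be handled separately; the forward direction is where the open mapping theorem enters, and I expect that to be the only substantive step.

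For the forward direction, suppose $T$ is surjective. By the open mapping theorem \cite[p.~73]{WR}, $T$ is an open map, so the image of the open unit ball of $\mathcal H_1$ contains an open ball of some radius $r>0$ about the origin in $\mathcal H_2$. I would turn this into a quantitative lower bound for $T^*$ as follows. For any unit vector $y\in\mathcal H_2$, the vector $\tfrac{r}{2}y$ lies in $T(B_1)$, so there is $x\in\mathcal H_1$ with $\|x\|<1$ and $Tx=\tfrac r2 y$. Then
\[
\tfrac r2 = \tfrac r2\langle y,y\rangle = \langle Tx,y\rangle = \langle x,T^*y\rangle \le \|x\|\,\|T^*y\| < \|T^*y\|,
\]
which gives $\|T^*y\|\ge r/2$ for all unit $y$, i.e.\ $T^*$ is bounded below. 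Here I have used only the definition of the adjoint and the Cauchy--Schwarz inequality.

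For the converse, suppose $T^*$ is bounded below. Boundedness below forces $T^*$ to be injective with closed range, and more to the point, I claim $T$ has dense range with closed range, hence is surjective. The cleanest route is to show $T(\mathcal H_1)$ is closed and dense. Density: if $y\perp T(\mathcal H_1)$, then $\langle Tx,y\rangle=\langle x,T^*y\rangle=0$ for all $x$, so $T^*y=0$, whence $y=0$ by the lower bound; thus $T(\mathcal H_1)^\perp=\{0\}$ and the range is dense. Closedness: if $Tx_k\to z$, then $(Tx_k)$ is Cauchy, and I would like to conclude $(x_k)$ can be taken Cauchy; the standard trick is to work modulo $\ker T$, noting that $\|Tx\|\ge c\,\dist(x,\ker T)$ follows from the lower bound on $T^*$ applied to the component of the target in $\overline{T(\mathcal H_1)}$, so a Cauchy sequence of images lifts to a Cauchy sequence of coset representatives, whose limit $x$ satisfies $Tx=z$. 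Combining dense range with closed range yields $T(\mathcal H_1)=\mathcal H_2$.

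The main obstacle is the forward direction's reliance on the open mapping theorem to produce the uniform radius $r$; everything afterward is a routine pairing computation. In the converse, the only point requiring care is the closed-range argument, where one must pass to the quotient by $\ker T$ to extract the lower bound $\|Tx\|\gtrsim\dist(x,\ker T)$ from the hypothesis on $T^*$; this is standard but is the step most easily glossed over.
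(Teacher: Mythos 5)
Your proof is correct, and it follows the route the paper itself indicates: the paper states this lemma without proof, simply recording it as a consequence of the open mapping theorem with a citation to Rudin, and your argument (open mapping theorem plus the pairing computation for the forward direction, dense-plus-closed range via the quotient by $\ker T$ for the converse) is precisely the standard proof that citation stands for. The only step you gloss, namely that $\|Tx\|\ge c\,\dist(x,\ker T)$ follows from the hypothesis, does hold as claimed: since $T^*$ is bounded below its range is closed, so $(\ker T)^{\perp}=\overline{\mathrm{ran}\,T^*}=\mathrm{ran}\,T^*$, and for $x=T^*y$ one has $\|Tx\|\,\|y\|\ge\langle TT^*y,y\rangle=\|T^*y\|^2\ge c\,\|y\|\,\|x\|$, giving the lower bound on $(\ker T)^{\perp}$.
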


If we let $T$ be the map $f\mapsto (\langle f,f_j\rangle_{\mathcal
H})$, then $T^*(c_j)=\sum_j c_j f_j$. Thus it follows from
Lemma~\ref{duality} that $(f_j)$ is a Riesz basic sequence if and
only if it is a Bessel sequence for which the moment problem
$\langle f, f_j\rangle_{\mathcal{H}} = a_j$ has a solution $f$ in
$\mathcal{H}$ for every square-summable sequence $(a_j).$ We may
also set $T=H_{(\Gamma,v);(\Lambda,w)}$ and observe that
Lemma~\ref{duality} gives the necessary condition
\[
w_j\simeq \left(\sum_{n=1}^\infty
\frac{v_n}{|\lambda_j-\gamma_n|^2}\right)^{-1}\] for surjectivity of
the transformation $H_{(\Gamma,v);(\Lambda,w)}$.

We return now to the space $\Hg(\Gamma, v)$. We note that the
reproducing kernel of $\Hg(\Gamma, w)$ at a point $z$ in
$(\Gamma,v)^{*}$ is
\[ k_{z}(\zeta)=\sum_{n=1}^\infty
\frac{v_n}{(\overline{z}-\overline{\gamma_n})(\zeta-\gamma_n)}; \]
this is a direct consequence of the definition of $\Hg(\Gamma, v)$.
Given a sequence $\Lambda=(\lambda_j)$ in $(\Gamma,v)^*$, we
associate with it the corresponding sequence of normalized
reproducing kernels
$\left(k_{\lambda_j}/\|k_{\lambda_j}\|_{\Hg(\Gamma,v)}\right)$. We
observe that if $w$ is the Bessel weight sequence for $\Lambda$ with
respect to $(\Gamma,v)$, then the transformation
$H_{(\Gamma,v);(\Lambda,w)}$ is bounded if and only if the system
$(k_{\lambda_j}/\|k_{\lambda_j}\|_{\Hg(\Gamma,v)})$ is a Bessel
sequence in $\Hg(\Gamma,v)$. Moreover, this transformation is both
bounded and surjective if and only if the system
$(k_{\lambda_j}/\|k_{\lambda_j}\|_{\Hg(\Gamma,v)})$ is a Riesz basic
sequence in $\Hg(\Gamma,v)$. Thus Theorem~\ref{th12} shows that the
Feichtinger conjecture holds true for systems of normalized
reproducing kernels in $\Hg(\Gamma,v)$ whenever $\Gamma$ is a
sequence satisfying the sparseness condition \eqref{expon}.

We finally note that if $w$ is the Bessel weight sequence for
$\Lambda$ with respect to $(\Gamma,v)$, then  the transformation
$H_{(\Gamma,v);(\Lambda,w)}$ is invertible if and only if the system
$(k_{\lambda_j}/\|k_{\lambda_j}\|_{\Hg(\Gamma,v)})$ is a Riesz basis
for $\Hg(\Gamma,v)$. If $\Hg(\Gamma,v)$ is obtained from a space
$\Hg$ satisfying (A1), (A2), (A3), as described in the previous
subsection, then Theorem~\ref{th13} applies. In the special case
when $\gamma_n\to\infty$ as $n\to\infty$, we may write the
meromorphic function defined in \eqref{Phi} as $\Phi=F/G$, with $G$
again denoting the generating function for $\Gamma$ and $F$ an
entire function with a simple zero at each point $\lambda_j$. Then
the expressions appearing in \eqref{PsiPhi} can be restated as
\[\nu_n=v_n |F(\gamma_n)/G'(\gamma_n)|^{2} \ \ \ \text{and} \
\ \ \varpi_j=w_j^{-1} |G(\lambda_j)/F'(\lambda_j)|^2.\]

\subsection{First example: de Branges spaces}\label{example1}

The prime examples of Hilbert spaces belonging to the general class
described in subsection~\ref{Hspace} are found among so-called de
Branges spaces and model subspaces of $H^2$. To begin with, we note
that de Branges spaces may be defined in terms of axioms that are
very similar to those introduced above. Indeed, a Hilbert space
$\Hg$ of entire functions which contains a non-zero element is
called a \emph{de Branges space} if it satisfies the following three
axioms:
\begin{itemize}
\item[(H1)] $\Hg$ has a reproducing kernel $\kappa_{\lambda}$ at
every point $\lambda$ in $\CC$, i.e., the point evaluation
functional $\kappa_\lambda: f\rightarrow f(\lambda)$ is continuous
in $\Hg$ for every $\lambda$ in $\CC$.
\item[(H2)] If $f$ is in $\Hg$ and $f(\lambda)=0$ for some point $\lambda$ in $\CC$,
then $f(z)(z-\bar{\lambda})/(z-\lambda)$ is in $\Hg$ and has the
same norm as $f$.
\item[(H3)] The function $\overline{f(\bar{z})}$ belongs to $\Hg$
whenever $f$ belongs to $\Hg$, and it has the same norm as $f$.
\end{itemize}

The general reference for de Branges spaces is the book \cite{DB}.
The leading example of a de Branges space is the Paley--Wiener
space, which consists of those entire functions of exponential type
at most $\pi$ that are square summable when restricted to the real
line.

A space $\Hg$ that satisfies (H1), (H2), (H3), will in particular
satisfy (A1), (A2), (A3) with $\Omega=\CC$. Indeed, we observe that
then (H1) and (A1) coincide, and it is also plain that (H2) implies
(A2). One of the basic results in de Branges's theory is that a
space that satisfies (H1), (H2), (H3), will have an orthogonal basis
consisting of reproducing kernels $\kappa_{\gamma_n}$ with
$\Gamma=(\gamma_n)$ being a sequence of real points. Thus, in
particular, (H1), (H2), (H3) imply that our third general axiom (A3)
holds. In the case of the Paley--Wiener space, we have an orthogonal
basis of reproducing kernels associated with the sequence of
integers, leading to what is known as the cardinal series or the
Shannon sampling theorem.

Another way of defining de Branges spaces is as follows. We say that
an entire function $E$ belongs to the Hermite--Biehler class if it
has no real zeros and satisfies $|E(z)|>|E(\overline{z})|$ for $z$
in the upper half-plane. For a given function $E$  in the
Hermite--Biehler class, we let $H(E)$ be the Hilbert space
consisting of all entire functions $f$ such that both $f/E$ and
$f^*/E$ belong to $H^2$. Here $f^*(z)= \overline{f(\overline{z})}$
and $H^2$ is the Hardy space of the upper half-plane, viewed in the
usual way as a subspace of $L^2({\Bbb R})$. We set
\[ \|f\|_{H(E)}^2=\int_{-\infty}^{\infty}\frac{|f(x)|^2}{|E(x)|^2}
dx.\] Then $H(E)$ is a de Branges space, and every de Branges space
can be obtained in this way via a function $E$ in the
Hermite--Biehler class. We arrive at the Paley--Wiener space by
setting $E(z)=e^{-i\pi z}$.

It follows from the preceding remarks that Theorem~\ref{th1} and
Theorem~\ref{th12} apply to de Branges spaces with orthogonal bases
of reproducing kernels located at a sequence of nonzero real points
$\gamma_n$ such that $\inf_n |\gamma_{n+1}|/|\gamma_n|>1$.

\subsection{Second example: Model subspaces of
$H^2$} Given an inner function $I$ in the upper half-plane, we
define the \emph{model subspace $K^2_I$} as
\[K^2_I = H^2\ominus IH^2;\]
which is the orthogonal complement in $H^2$ of functions divisible
by the inner function $I$. These spaces are, by a classical theorem
of A. Beurling \cite{BB}, the subspaces of $H^2$ that are invariant
with respect to the backward shift. We refer to \cite{NK,NK1,NK2}
for information about the model theory related to the backward
shift. The elements of $K^2_I$ (originally defined in
$\mathbb{C}^+$) have meromorphic extensions into $\mathbb{C}$  if
the function $I$ has such an extension. In this case, we have the
relation $I=E^{*}/E$ and the map $f\mapsto f/E$ is a unitary map
from $H(E)$ to $K^2_I$. Thus de Branges spaces can be viewed as a
subclass of the collection of all model subspaces of $H^2$.

We now prove that every model subspace satisfies axiom (A2) from
Subsection~\ref{Hspace}. This is obvious if we consider $K^2_I$ as a
space of functions on the upper half-plane, but for our purposes it
is essential that we also include those points on the real line at
which point evaluation makes sense. We will need the fact that the
reproducing kernel for $K_I^2$ at some point $\zeta$ in the upper
half-plane is
\[ \kappa_{\zeta}(z)=\frac{i}{2\pi}\cdot\frac{1-\overline{I(\zeta)}
I(z)}{z-\overline{\zeta}}. \] This formula extends to each point on
the real line at which every function in $K^2_I$ has a nontangential
limit whose modulus is bounded by a constant times the $H^2$ norm of
the function. A paper  of P. Ahern and D. Clark \cite{AC} gives that
these are exactly the points $x$ at which $I'$ has a nontangential
limit.

\begin{lemma}\label{axA2}
The Hilbert space $K^2_I$, viewed as a space of functions on the set
\[\Omega=\left\{z=x+iy: \ y\ge 0 \ \text{and}\ f\mapsto f(z) \ \text{is bounded}
\right\},\]  satisfies axiom (A2) of Subsection~\ref{Hspace}.
\end{lemma}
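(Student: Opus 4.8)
We need to show that $K^2_I$, as a space of functions on $\Omega$, satisfies (A2): every exact uniqueness sequence for $K^2_I$ admits a generating function.

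Let me think about what (A2) demands. An exact uniqueness sequence $\Lambda = (\lambda_j)$ is one where no nonzero function vanishes on all of $\Lambda$, but removing any single point $\lambda_j$ allows a nonzero function to vanish on the rest. A generating function $G$ (defined on $\Omega$, not necessarily in $K^2_I$) vanishes on $\Lambda$ and factors as $G(z) = (z-\lambda_j) g_j(z)$ with $g_j \in K^2_I$ nonzero, for each $j$.

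**Key structural facts for model spaces.**

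For model subspaces, we have division-type structure. For a point $\lambda$ in the upper half-plane where $f(\lambda) = 0$, we can divide: $f(z)(z-\bar\lambda)/(z-\lambda)$ stays in $K^2_I$ (this is the standard "pseudocontinuation"/Blaschke division). But crucially, the ordinary quotient $f(z)/(z-\lambda)$ need NOT be in $K^2_I$. So the generating function must be constructed carefully.

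The natural candidate for $G$: since $\Lambda$ is exact uniqueness, for each $j$ there's a nonzero $h_j \in K^2_I$ vanishing on $\Lambda \setminus \{\lambda_j\}$. This $h_j$ must NOT vanish at $\lambda_j$ (else it'd vanish on all $\Lambda$, contradicting uniqueness).

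**How I would construct $G$.**

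The idea: pick any reference point, say use $h_1 \in K^2_I$ vanishing on $\Lambda \setminus \{\lambda_1\}$ with $h_1(\lambda_1) \neq 0$. Then $G(z) := (z - \lambda_1) h_1(z)$ is a candidate generating function — it vanishes on all of $\Lambda$ (on $\Lambda\setminus\{\lambda_1\}$ because $h_1$ does, at $\lambda_1$ because of the factor $(z-\lambda_1)$).

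Now I must verify the factorization property at every OTHER point $\lambda_k$, $k \neq 1$:
$$G(z) = (z - \lambda_k) g_k(z), \quad g_k \in K^2_I, \ g_k \neq 0.$$
We have $G(z) = (z-\lambda_1) h_1(z)$. At $\lambda_k$ ($k\neq 1$), $h_1(\lambda_k) = 0$. So we need $(z-\lambda_1) h_1(z)/(z-\lambda_k) \in K^2_I$. Using the model-space division: since $h_1(\lambda_k)=0$, the function $h_1(z)(z-\bar\lambda_k)/(z-\lambda_k)$ is in $K^2_I$.

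**The main obstacle and how to handle it.**

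The main obstacle is that dividing out a zero in $K^2_I$ produces the factor $(z-\bar\lambda_k)/(z-\lambda_k)$, not $1/(z-\lambda_k)$. So the naive quotient $h_1/(z-\lambda_k)$ may leave $K^2_I$. The trick: consistency of $G$. Whatever $g_k$ we produce must give the SAME $G$ up to the definition. Since $G$ is a fixed function, I'll show $(z-\lambda_1)h_1(z)/(z-\lambda_k)$ equals $(z-\lambda_1)/(z-\bar\lambda_k) \cdot [h_1(z)(z-\bar\lambda_k)/(z-\lambda_k)]$, where the bracket is in $K^2_I$. I need the prefactor $(z-\lambda_1)/(z-\bar\lambda_k)$ to be a bounded multiplier that keeps us in $K^2_I$ — but it's not obviously so.

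The cleaner route, which I expect works: build $G$ symmetrically using a single entire/meromorphic object. If $I$ extends meromorphically, write $I = E^*/E$ and transfer to the de Branges space $H(E)$ via $f \mapsto f/E$ (unitary). In $H(E)$, axiom (H2) holds, and the remark after (A2) in the excerpt shows that (H2) implies (A2) directly. So I would prove the lemma by transporting exact uniqueness sequences and generating functions through this unitary map $f \mapsto f/E$: a sequence is exact-uniqueness for $K^2_I$ iff it is for $H(E)$, and $G$ generates for one iff $G/E$ (suitably interpreted) generates for the other. The real content is checking this transport respects the factor $(z-\lambda_j)$ and the boundedness set $\Omega$ — I expect the points of $\Omega$ on the real line (Ahern–Clark points where $I'$ has a nontangential limit) are exactly where the construction must be verified by hand, and that will be the delicate step.
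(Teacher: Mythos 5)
Your starting point coincides with the paper's: take $h_1\in K^2_I$ vanishing on $\Lambda\setminus\{\lambda_1\}$ with $h_1(\lambda_1)\neq 0$ and set $G(z)=(z-\lambda_1)h_1(z)$. But from there the proposal has a genuine gap, in fact two. First, a misconception: for $\lambda_k$ in the \emph{open} upper half-plane the plain quotient is not an obstacle at all. Model spaces are exactly the backward-shift invariant subspaces of $H^2$ (Beurling, as cited in the paper), so the difference-quotient operator $f\mapsto (f(z)-f(\lambda_k))/(z-\lambda_k)$ maps $K^2_I$ into itself for every $\lambda_k$ in $\CC^+$; hence $h_1(\lambda_k)=0$ immediately gives $h_1(z)/(z-\lambda_k)\in K^2_I$, which is why the paper dismisses interior points as ``plain.'' The only delicate case is $\lambda_k$ on the real line, which you do flag at the end --- but you never prove it, you only announce it as ``the delicate step.'' That step is precisely the content of the paper's argument: an auxiliary continuity lemma (based on the Ahern--Clark theorem) showing $\|\kappa_{x_0+iy}-\kappa_{x_0}\|_{H^2}\to 0$ as $y\to 0$ at any real point $x_0$ where evaluation is bounded, followed by a perturbation argument that moves the real point $\lambda_j$ to $\lambda_j+i\varepsilon$, notes the perturbed sequence is still an exact uniqueness sequence, and passes to the limit $\varepsilon\to 0$ to get $g_j(z)=\alpha G(z)/(z-\lambda_j)$ with $G'_\varepsilon(\lambda_j+i\varepsilon)\to 1/\alpha$ finite. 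Without some substitute for this limiting argument the proof is incomplete exactly where the lemma has content.

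Second, the fallback route you propose --- write $I=E^*/E$, transfer to the de Branges space $H(E)$ by the unitary map $f\mapsto f/E$, and invoke (H2) $\Rightarrow$ (A2) --- does not cover the stated generality. The representation $I=E^*/E$ with $E$ in the Hermite--Biehler class exists only when $I$ admits a meromorphic continuation across $\R$ (as the paper itself notes), i.e., essentially when $I$ is a meromorphic inner function. A general inner function (one with a nontrivial singular factor, or a Blaschke product whose zeros accumulate on all of $\R$) admits no such $E$, yet the lemma is asserted for every model subspace $K^2_I$. So this reduction proves at best a special case, and even in that case the issue of which real points lie in $\Omega$ and how the factorization behaves there would still have to be checked by the kind of boundary continuity argument the paper supplies.
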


To make the proof more transparent, we single out the main technical
ingredient as a separate lemma.

\begin{lemma}\label{contin}
If $x_0$ is a point on the real line at which the point evaluation
functional for $K_I^2$ is bounded, then
$\|\kappa_{x_0+iy}-\kappa_{x_0}\|_{H^2}\to 0$ when $y\to 0$.
\end{lemma}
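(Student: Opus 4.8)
The plan is to expand the squared norm by means of the reproducing property and reduce the lemma to a pair of scalar limits. Writing $\zeta = x_0 + iy$ with $y>0$ and using $\innerprod{f}{\kappa_\lambda} = f(\lambda)$ for $f\in K_I^2$, I would first record the identity
\[
\norm{\kappa_\zeta - \kappa_{x_0}}_{H^2}^2 = \kappa_\zeta(\zeta) - 2\re\kappa_\zeta(x_0) + \kappa_{x_0}(x_0),
\]
where $\kappa_{x_0}(x_0)=\norm{\kappa_{x_0}}_{H^2}^2$ is a positive real number. Thus the lemma reduces to showing that both $\kappa_\zeta(\zeta)$ and $\re\kappa_\zeta(x_0)$ converge to $\kappa_{x_0}(x_0)$ as $y\to 0^+$.

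Next I would substitute the explicit formula $\kappa_\zeta(z)=\frac{i}{2\pi}\cdot\frac{1-\overline{I(\zeta)}I(z)}{z-\overline{\zeta}}$. Since $\overline{\zeta}=x_0-iy$, this gives
\[
\kappa_\zeta(\zeta)=\frac{1}{4\pi}\cdot\frac{1-|I(x_0+iy)|^2}{y}, \qquad \kappa_\zeta(x_0)=\frac{1}{2\pi}\cdot\frac{1-\overline{I(x_0+iy)}I(x_0)}{y}.
\]
By the Ahern--Clark theorem \cite{AC}, the hypothesis that point evaluation at $x_0$ is bounded is equivalent to $I$ having a finite angular derivative at $x_0$; in particular the vertical limits $I(x_0)$ and $I'(x_0)$ exist, $|I(x_0)|=1$, and as $y\to 0^+$ one has $\frac{1-|I(x_0+iy)|^2}{2y}\to |I'(x_0)|$ together with $\frac{I(x_0+iy)-I(x_0)}{iy}\to I'(x_0)$. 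Differentiating $|I|^2=1$ along $\R$ shows $\overline{I(x_0)}I'(x_0)$ is purely imaginary, and for an inner function on the upper half-plane it equals $i|I'(x_0)|$; hence (by L'Hospital applied to $\kappa_{x_0}(z)=\frac{i}{2\pi}\cdot\frac{1-\overline{I(x_0)}I(z)}{z-x_0}$) we get $\kappa_{x_0}(x_0)=\frac{-i}{2\pi}\overline{I(x_0)}I'(x_0)=\frac{1}{2\pi}|I'(x_0)|$. The first displayed limit then yields $\kappa_\zeta(\zeta)\to\frac{1}{2\pi}|I'(x_0)|=\kappa_{x_0}(x_0)$.

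For the remaining term I would compare $\kappa_\zeta(x_0)$ with $\kappa_\zeta(\zeta)$ through the elementary identity
\[
\frac{1-\overline{I(x_0+iy)}I(x_0)}{y}-\frac{1-|I(x_0+iy)|^2}{y}=\overline{I(x_0+iy)}\cdot\frac{I(x_0+iy)-I(x_0)}{y},
\]
whose right-hand side tends to $\overline{I(x_0)}\cdot iI'(x_0)=-|I'(x_0)|$. Combining this with $\frac{1-|I(x_0+iy)|^2}{y}\to 2|I'(x_0)|$ shows that $\kappa_\zeta(x_0)\to\frac{1}{2\pi}|I'(x_0)|=\kappa_{x_0}(x_0)$, a real limit. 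Feeding both limits into the expansion gives $\norm{\kappa_\zeta-\kappa_{x_0}}_{H^2}^2\to \kappa_{x_0}(x_0)-2\kappa_{x_0}(x_0)+\kappa_{x_0}(x_0)=0$, as desired. The main obstacle is the correct invocation of the Ahern--Clark (Julia--Carath\'eodory) machinery: one must ensure the relevant limits are genuinely attained along the vertical segments $x_0+iy$ and pin down the sign in $\overline{I(x_0)}I'(x_0)=i|I'(x_0)|$. Once these boundary-behavior facts are secured, the remainder is routine algebra with the kernel formula.
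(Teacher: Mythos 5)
Your argument is correct, but it takes a genuinely different route from the paper. The paper normalizes $I(x_0)=1$, writes out $\kappa_{x_0+iy}(t)-\kappa_{x_0}(t)$ as an explicit function of $t\in\R$, and splits it into two pieces: one of the form $(1-\overline{I(x_0+iy)})I(t)/(t-(x_0+iy))$, whose $H^2$ norm is $O(y^{1/2})$ because Ahern--Clark gives $|1-I(x_0+iy)|\lesssim y$ while $\int_\R |t-(x_0+iy)|^{-2}\,dt \simeq y^{-1}$, and a second piece which is $\kappa_{x_0}(t)$ times a multiplier bounded by $1$ and tending to $0$ pointwise, handled by dominated convergence. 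You instead expand $\|\kappa_\zeta-\kappa_{x_0}\|^2$ by the reproducing property (legitimate here, since the paper has just noted that the kernel formula extends to $x_0$ and represents the bounded evaluation functional) and reduce everything to three scalar boundary limits, which you evaluate via the half-plane Julia--Carath\'eodory theorem. I checked your scalar computations and they are all correct, including the sign fact $\overline{I(x_0)}I'(x_0)=i|I'(x_0)|$ (e.g.\ $I(z)=e^{iz}$ confirms it), the limits $\bigl(1-|I(x_0+iy)|^2\bigr)/(2y)\to|I'(x_0)|$ and $\bigl(I(x_0+iy)-I(x_0)\bigr)/(iy)\to I'(x_0)$ along the vertical segment (which is nontangential), and the consequence $\kappa_{x_0}(x_0)=|I'(x_0)|/(2\pi)$. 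The trade-off: your route buys more---it identifies the limiting norm explicitly and proves $\|\kappa_{x_0+iy}\|^2\to\|\kappa_{x_0}\|^2$ as a byproduct---but it leans on the full Julia--Carath\'eodory package, whereas the paper's proof needs only the single estimate $|1-\overline{I(x_0)}I(x_0+iy)|\lesssim y$ together with $\kappa_{x_0}\in H^2$ and dominated convergence. One small caution: your parenthetical justification ``differentiating $|I|^2=1$ along $\R$'' is only heuristic, since $|I|=1$ merely a.e.\ on $\R$ and no differentiability of $I$ along the line is available; the purely-imaginary statement with the correct sign is genuinely part of the Julia--Carath\'eodory theorem, as you yourself acknowledge at the end, so the proof stands once that citation is made precise.
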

\begin{proof}
Assuming $I(x_0)=1$, we may write
\[ \begin{split}
\frac{2\pi}{i}(\kappa_{x_0+iy}(t)-\kappa_{x_0}(t))&  =
\frac{1-\overline{I(x_0+iy)}I(t)}{t-(x_0+iy)}-\frac{1-I(t)}{t-x_0} \\
& =  \frac{(1-\overline{I(x_0+iy)}) I(t)}{t-(x_0+iy)}-
\frac{(1-I(t))iy t}{(t-x_0)(t-(x_0+iy))}.
\end{split} \]
Here the first term on the right-hand side has $H^2$ norm bounded by
a constant times $y^{\half}$ in view of the theorem of Ahern and
Clark \cite{AC}, while the second term tends to $0$ when $y\to 0$,
by Lebesgue's dominated convergence theorem.
\end{proof}
\begin{proof}[Proof of Lemma~\ref{axA2}]
Let $\Lambda$ be an exact uniqueness set for $K^2_I$ consisting of
points in $\Omega$. We will let $g_{j}$ denote the unique function
in $K^2_I$ such that $g_j(\lambda_l)$ equals $0$ when $l\neq j$ and
$1$ for $l=j$. We can choose an arbitrary point in $\Lambda$, say
$\lambda_1$, and choose $G(z)=(z-\lambda_1)g_1(z)$ as our candidate
for a generating function. It is plain that if $\lambda_j$ is a
point in the open half-plane, then
$g_j(z)=G(z)/[G'(\lambda_j)(z-\lambda_j)]$. The difficulty occurs if
$\lambda_j$ is a point on the real line. In this case, if we replace
$\lambda_j$ by $\lambda_j+i\varepsilon$, then the modified sequence
$\Lambda^{(\varepsilon)}$ will still be an exact uniqueness sequence
for $K^2_I$. In fact, by Lemma~\ref{contin}, the function $g_1$
vanishing on $\Lambda^{(\varepsilon)}\setminus\{\lambda_1\}$ will
vary continuously with $\varepsilon$. Thus the corresponding
generating function $G_{\varepsilon}(z)$ will tend to $G(z)$ for
every point in the upper half-plane when $\varepsilon \to 0$. On the
other hand, another application of Lemma~\ref{contin} gives that
$G_{\varepsilon}(z)/[G'_{\varepsilon}(\lambda_j+i\varepsilon)(z-\lambda_j+i\varepsilon)]\to
g_j(z)$ in $K^2_I$ when $\varepsilon \to 0$. Lemma~\ref{contin} also
gives that $G'_{\varepsilon}(\lambda_j+i\varepsilon)$ converges to a
finite number, say $1/\alpha$, and we may therefore conclude that
$g_j(z)=\alpha G(z)/(z-\lambda_j)$.
\end{proof}

As for axiom (A3), it remains an open problem to decide whether
every model subspace $K^2_I$ has a Riesz basis of normalized
reproducing kernels. Thus it is not known whether the class of
spaces introduced in Subsection~\ref{Hspace} includes all model
subspaces. However, there exists an interesting class of model
subspaces that actually possess orthogonal bases of reproducing
kernels associated with sequences of real points. Such bases, to be
discussed briefly below, are called Clark bases \cite{CL}. We also
note that if the inner function $I$ happens to be an interpolating
Blaschke product, then it is well-known and easy to show that
$K^2_I$ has a Riesz basis of normalized reproducing kernels
associated with the sequence of zeros of $I$.

The spaces $K^2_I$ that possess Clark bases, correspond precisely to
those spaces $\Hg(\Gamma, v)$ for which $\Gamma$ is a real sequence.
To get from $\Hg(\Gamma, v)$ to the corresponding space $K_I^2$, we
construct the Herglotz function \begin{equation}
\label{phi}\varphi(z)=\sum_{n=1}^\infty v_n
\left(\frac{1}{\gamma_n-z}-\frac{\gamma_n}{1+\gamma_n^2}\right).
\end{equation} Then
\begin{equation} \label{inner}
I(z)=\frac{\varphi(z)-i}{\varphi(z)+i} \end{equation} will be an
inner function in the upper half-plane, and the map $f\mapsto
(1-I)f$ will be a unitary map from $\Hg(\Gamma, v)$ to $K^2_I$; it
is implicit in this construction that in fact every function in
$K^2_I$ has a nontangential limit at each point $\gamma_n$ and also
that the corresponding point evaluation functional is bounded at
$\gamma_n$. We refer to our recent paper \cite{BMS}, where Clark
bases are treated in more detail and it is shown that families of
orthogonal bases of reproducing kernels can only exist in spaces of
the form $\Hg(\Gamma,v)$ when $\Gamma$ is a subset of a straight
line or a circle.

We conclude that our general discussion (including
Theorem~\ref{th13}) applies to model subspaces $K^2_I$ that possess
 Clark bases.

\subsection{Carleson measures and systems of reproducing kernels in
de Branges spaces and model subspaces of $H^2$}\label{deBmod} A
long-standing problem in the function theory of de Branges spaces
and, more generally, of model subspaces $K_I^2$ is to describe their
Carleson measures. Only a few special cases have been completely
understood. One such case is when $I$ is a so-called one-component
inner function, i.e., when there exists a positive number $
\varepsilon$ with $0 < \varepsilon < 1$ such that the set of points
$z$ in the upper half-plane satisfying $|I(z)| < 1-\varepsilon$ is
connected \cite{WC, TV}. Other partial results can be found in
\cite{Al, Al1, Ba1, WC1, DY1, TV}. The lack of a general result on
Carleson measures and, more specifically, on the geometry of Bessel
sequences of normalized reproducing kernels is an obvious challenge
when we address the Feichtinger conjecture in this setting.

Since positive results on Carleson measures are scarce, we mention
without proof the following observation: A suitable adaption of
Theorem~\ref{th1} gives a description of any Carleson measure $\mu$
restricted to a cone $\{z=x+iy: \ |z-x_0|< Cy\}$; here $x_0$ is an
arbitrary real point and $C$ a positive constant.  To arrive at this
result, one may represent the space by means of its Clark basis or
more generally as an $L^2$ space with respect to a Clark measure
\cite{CL}, and act similarly as in Section~\ref{proofth12}.

A conjecture of W. Cohn \cite{WC}, suggesting that it would suffice
to verify the Carleson measure condition for reproducing kernels,
was refuted by F. Nazarov and A. Volberg \cite{NV}. In the present
paper, we will likewise give a negative answer to the following
question raised by A. Baranov:
\begin{question}
 Suppose that $\Gamma$ and
$\Lambda$ are disjoint sequences of real numbers and that
$\gamma_n\nearrow \infty$. If $w$ is the Bessel weight sequence for
$\Lambda$ with respect to
 $(\Gamma,v)$ and $H_{(\Gamma,v);(\Lambda,w)}$ is a bounded transformation,
then is it true that there is a uniform bound on the number of
$\lambda_j$ found between two points $\gamma_n$ and $\gamma_{n+1}$?
\end{question}
A slight modification of our general approach will lead to a
suitable example, to be presented in Subsection~\ref{cluster}, with
no such uniform bound.

It was recently shown by A. Baranov and K. Dyakonov \cite{BD} that
the Feichtinger conjecture holds true for normalized reproducing
kernels for $K^2_I$ when either $I$ is a one-component inner
function or the points $\lambda_j$ associated with the sequence of
reproducing kernels $\kappa_{\lambda_j}$ satisfy
\begin{equation} \label{fromone}
\sup_j|I(\lambda_j)|<1.
\end{equation}
In the latter case, a complete description of Riesz basic sequences
exists \cite{HNP}, and this result plays an essential role in the
proof. Baranov and Dyakonov used their result for the case when
\eqref{fromone} holds to treat the general case of one-component
inner functions. Their approach was to split the half-plane into two
regions, one in which $|I(z)|$ is bounded away from $1$ and another
in which a perturbation argument for Clark bases applies.

Our results complement those of \cite{BD} and show that, in general,
one needs additional ideas to resolve the Feichtinger conjecture for
normalized reproducing kernels for model subspaces. First of all,
since $\Gamma$ satisfies the sparseness condition \eqref{expon}, the
corresponding inner function $I$, defined by \eqref{inner} via
\eqref{phi}, is not a one-component inner function. Moreover, as
will be revealed in Subsection~\ref{geometry}, there exist inner
functions $I$ and Bessel sequences of normalized reproducing kernels
$\kappa_{\lambda_j}/\| \kappa_{\lambda_j}\|_2$ for $K_I^2$ such that
$|I(\lambda_j)|\to 1$ but no infinite subsequence is a subsequence
of a Riesz basis.

By the observation made at the end of the previous subsection, the
problem of describing all Riesz bases of normalized reproducing
kernels for $K^2_I$ is part of the problem of deciding when discrete
Hilbert transforms $H_{(\Gamma,v);(\Lambda,w)}$ are bounded. The
most far-reaching result known about such bases is that found in
\cite{HNP} dealing with the case when \eqref{fromone} holds. The
general result in \cite{HNP} for this particular case leads to a
description of all Riesz bases of normalized reproducing kernels for
the Paley--Wiener space and also for a wider class of de Branges
spaces known as weighted Paley--Wiener spaces \cite{LS2}. One of the
main points of \cite{HNP} is that when \eqref{fromone} holds, one
can transform the problem into a question about invertibility of
Toeplitz operators and then apply the Devinatz--Widom theorem.
Another approach, closer in spirit to the present work, can be found
in \cite{LS1}, where the Riesz basis problem is explicitly related
to a boundedness problem for Hilbert transforms.

\subsection{Third example: ``Small'' Fock-type spaces}

It may be noted that our work gives a full description of the
Carleson measures and the Riesz bases of normalized reproducing
kernels for certain ``small'' Fock-type spaces studied recently by
A. Borichev and Yu. Lyubarskii \cite{BL}. The spaces $\Hg$
considered by these authors consist of all entire functions $f$ such
that
\[ \|f\|_{\varphi}^2 =\int_{\CC} |f(z)|^2 e^{-2\varphi(|z|)} dm(z) <\infty,\]
where $\varphi$ is a positive, increasing, and unbounded function on
$[0,\infty)$ and $m$ denotes Lebesgue area measure on $\CC$. The
main point of \cite{BL} is that if $\varphi$ grows ``at most as
fast'' as $[\log(1+r)]^2$, then the corresponding space $\Hg$ has a
Riesz basis of reproducing kernels and, conversely, if the growth of
$\varphi$ is ``faster'' than $[\log(1+r)]^2$, then no such basis
exists. It is proved that when $\varphi(r)=[\log(1+r)]^2$, we can
choose such a basis associated with a sequence $\Gamma=(\gamma_n)$
satisfying $|\gamma_n|=e^{n/2}$; if
$\varphi(r)=[\log(r+1)]^{\alpha}$ with $1<\alpha<2$, then the growth
of $|\gamma_n|$ will be super-exponential. The detailed
interpretation of our results in this setting will be presented in
the forthcoming doctoral dissertation \cite{Meng}.

In view of the observations in the previous subsections, the results
of Borichev and Lyubarskii clarify when a Fock-type space equals a
de Branges space, i.e., when the two spaces consist of the same
entire functions and have equivalent norms.

\section{The boundedness problem}\label{proofth12}

\subsection{Proof of Theorem~\ref{th1}}
In what follows, we will use the notation
\begin{equation}
 V_1=1,\ \ \  V_n=\sum_{j=1}^{n-1} v_j, \ \ \ \ \ \text{and} \ \ \ \
\ P_n=\sum_{j=n+1}^\infty \frac{v_j}{|\gamma_j|^2}. \label{not1}
\end{equation}
\begin{proof}[Proof of the necessity of the conditions in Theorem~\ref{th1}]
We observe first that the necessity of \eqref{trivial} is obvious:
Just apply $H_{(\Gamma,v)}$ to the sequence $e^{(n)}=(e_m^{(n)})$
with $e^{(n)}_n=1$ and $e_m^{(n)}=0$ for $m\neq n$.

To show that \eqref{carl_2} is also a necessary condition, we begin
by looking at the sequence $c^{(n)}=(c_m^{(n)})$ so that
$c_m^{(n)}=1$ for $m < n$ and $c_m^{(n)}=0$ otherwise. We observe
that $\|c^{(n)}\|_{v}^2=V_n$ and note that for $z$ in $\Omega_l$ and
$l\ge n$ we have
\[
|H_{(\Gamma,v)}c^{(n)}(z)|^2=\biggl{|}\sum_{m=1}^{n-1}
\frac{v_m}{z-\gamma_m}\biggr{|}^2\gtrsim \frac{V_n^2}{ |z|^{2}}.
\]
Taking into account the boundedness of $H_{(\Gamma,v)}$, we deduce
from this that
 \[
V_n \gtrsim \int_{{\mathbb{C}}} |H_{(\Gamma,v)}c^{(n)}(z)|^2 d\mu(z)
 \gtrsim V_n^2 \sum_{m\ge n}
\int_{\Omega_m} \frac{d\mu(z)}{|z|^2}.\]

On the other hand, if we set $a^{(n)}=(a_m^{(n)})$ so that
$a_m^{(n)}=1/\overline{\gamma_m}$ for $m>n$ and $a_m^{(n)}=0$
otherwise, then $\|a^{(n)}\|_{v}^2=P_n$. We note that for $z$ in
$\Omega_l$ and $l\le n$ we have
\[ |H_{(\Gamma,v)}a^{(n)}(z)|^2=\biggl{|}\sum_{m=n+1}^\infty
\frac{v_m}{\overline{\gamma_m}(z-\gamma_m)}\biggr{|}^2\gtrsim P_n^2
.\] Thus
 \[
P_n \gtrsim \int_{{\mathbb{C}}} |H_{(\Gamma,v)}a^{(n)}(z)|^2 d\mu(z)
 \gtrsim P_n^2 \sum_{m\le n}
\mu(\Omega_m).\]
\end{proof}

\begin{proof}[Proof of the sufficiency of the conditions in Theorem~\ref{th1}]
Let $a=(a_n)$ be an arbitrary sequence in $\ell_v^2$. We make first
the following estimate:
\begin{eqnarray*}
\label{4} \int_{\Omega_n} |H_{(\Gamma,v)}a(z)|^2 d\mu(z) \leq 3
\int_{\Omega_n}\bigg[ \bigg|\sum_{m=1}^{n-1}\frac{a_m v_m}{z -
\gamma_m}\bigg|^2 + \frac{|a_{n}|^2 v_n^{2}}{|z - \gamma_n|^2}
 + \bigg|\sum_{m=n+1}^\infty \frac{a_m v_m}{z -
\gamma_m}\bigg|^2\biggr{]}d\mu(z) \nonumber\\
\lesssim \int_{\Omega_n} \left[|z|^{-2}\bigg(\sum_{m=1}^{n-1}|a_m
|v_m\bigg)^2+\bigg(\sum_{m=n+1}^\infty
\frac{|a_m|v_m}{|\gamma_m|}\bigg)^2\right] d\mu(z)+ |a_n|^2 v_n;
\end{eqnarray*}
here we used the Cauchy--Schwarz inequality and \eqref{trivial}.
Hence it remains for us to show that
\begin{equation}
\label{first1}\sum_{n=1}^\infty  \bigg(\sum_{m=1}^{n-1}|a_m
|v_m\bigg)^2 \int_{\Omega_n}|z|^{-2} d\mu(z) \lesssim
\sum_{j=1}^\infty |a_j|^2 v_j. \end{equation} and
\begin{equation}
\label{second2}\sum_{n=1}^\infty
\bigg(\sum_{m=n+1}^{\infty}\frac{|a_m |v_m}{|\gamma_m|}\bigg)^2
\mu(\Omega_n) \lesssim \sum_{j=1}^\infty |a_j|^2v_j. \end{equation}

We consider first \eqref{first1}. To simplify the writing, we set
\[ \tau_n=\left(\int_{\Omega_n} |z|^{-2} d\mu(z)\right)^{\frac{1}{2}}.\] By duality,
\[ \left(\sum_{n=1}^\infty \tau_n^2 \bigg(\sum_{m=1}^{n-1}|a_m
|v_m\bigg)^2\right)^{\half} =\sup_{\|(c_n)\|_{\ell^2}=1}
\sum_{n=1}^\infty |c_n| \tau_{n}\sum_{m=1}^{n-1}|a_m |v_m.\] Since
\[ \sum_{n=1}^\infty |c_n| \tau_{n}\sum_{m=1}^{n-1}|a_m |v_m
=\sum_{m=1}^\infty |a_m| v_m \sum_{n=m+1}^\infty |c_n| \tau_n,
\] it suffices to show that the $\ell^2$-norm of
\[\alpha_m =
v_m^{\half}\sum_{n=m+1}^{\infty}|c_n| \tau_{n}\] is bounded by a
constant times the $\ell^2$-norm of $(c_n)$. To this end, we note
that the Cauchy--Schwarz inequality gives
\[ |\alpha_m|^2 \le v_m \sum_{n=m+1}^\infty |c_n|^2 V_n^{-\half}
\sum_{j=m+1}^\infty \tau_j^2 V_j^{\half}.\]

By \eqref{carl_2}, we see that
\[
\sum_{j: 2^l V_m<V_j\leq 2^{l+1}V_m}\tau_j^2 V_j^{\half} \lesssim
\frac{1}{2^{\frac{l}{2}}V_{m+1}^{\half}}\] for $\l\ge 0$. Summing
these inequalities, we get
\[\sum_{j=m+1}^\infty\tau^2_jV^\half_j\lesssim
\frac{1}{V^\half_{m+1}}.\] Hence
\[ |\alpha_n|^2 \lesssim \frac{v_m}{V_{m+1}^{\half}} \sum_{n=m+1}^\infty |c_n|^2
V_n^{-\half}.\] This gives us \[ \sum_{n=1}^\infty |\alpha_n|^2
\lesssim \sum_{n=1}^\infty |c_n|^2
V_n^{-\half}\sum_{m=1}^{n-1}\frac{v_m}{V_{m+1}^{\half}}, \] and so
\eqref{first1} follows because
\[ V_n^{-\half}\sum_{m=1}^{n-1}\frac{v_m}{V_{m+1}^{\half}}
\le V_n^{-\half}\int_0^{V_n} x^{-\half} dx = 2. \]

We next consider \eqref{second2}. We note to begin with that the
Cauchy--Schwarz inequality gives
\[ \bigg(\sum_{m=n+1}^{\infty}
\frac{|a_m| v_m}{|\gamma_m|} \bigg)^2\le \sum_{m=n+1}^\infty |a_m|^2
v_m P_{m-1}^{\half} \sum_{j=n+1}^\infty \frac{v_j}{P_{j-1}^{\half}
|\gamma_j|^2}. \]
 Since
\[ \sum_{j=n+1}^\infty
\frac{v_j}{P_{j-1}^{\half} |\gamma_j|^2}\le \int_0^{P_n}x^{-\half}
dx \le 2 P_n^{\half},\] it follows that
\[ \sum_{n=1}^\infty \mu(\Omega_n) \bigg(\sum_{m=n+1}^\infty
\frac{|a_m|v_m^{\half}}{|\gamma_m|}\bigg)^2\lesssim
\sum_{n=1}^\infty \mu(\Omega_n) P_n^{\half} \sum_{m=n+1}^\infty
|a_m|^2v_m P_m^{\half},\] which becomes \[\sum_{n=1}^\infty
\mu(\Omega_n) \bigg(\sum_{m=n+1}^\infty
\frac{|a_m|v_m}{|\gamma_m|}\bigg)^2\lesssim \sum_{m=1}^\infty
|a_m|^2v_m P_{m-1}^{\half} \sum_{n=1}^{m-1} \mu(\Omega_n)
P_n^{\half}
\] when we change the order of summation. From \eqref{carl_2} it
follows that
\[\sum_{n=1}^{m-1}
\mu(\Omega_n) P_n^{\half}\lesssim \sum_{l=0}^{\infty}\sum_{n: 2^l
P_{m-1}\leq P_n\leq 2^{l+1}P_{m-1}}\mu(\Omega_n)
P_n^{\half}\lesssim\frac{1}{P^\half_{m-1}}
\sum_{l=0}^\infty\frac{1}{2^{\frac{l}{2}}}\lesssim
\frac{1}{P^\half_{m-1}},
\]
and we get \eqref{second2}.
\end{proof}

\subsection{Special cases} Condition  \eqref{trivial} of Theorem \ref{th1}
is a condition on the local behavior of $\mu$, while condition
\eqref{carl_2} deals with its global behavior. Combining the two
conditions, we see that \eqref{trivial} may be replaced by the
stronger global condition
\[ \sup_{n\ge 1} \int_{\CC} \frac{v_n d\mu(z)}{|z-\gamma_n|^2}
<\infty.\] We single out two cases in which \eqref{carl_2} is
automatically fulfilled once either this condition or the original
one \eqref{trivial} holds.
\begin{corollary}\label{corone}
Suppose the sequence $\Gamma$ satisfies the sparseness condition
\eqref{expon} and that the numbers $v_n$ grow at least exponentially
and that the numbers $v_n/|\gamma_n|^2$ decay at least exponentially
with $n$. If $\mu$ is a nonnegative measure on $\CC$ with
$\mu(\Gamma)=0$, then the operator $H_{(\Gamma,v)}$ is bounded from
$\ell^2_v$ to $L^2(\mathbb{C},\mu)$ if and only if
\[ \sup_{n\ge 1} \int_{\Omega_n} \frac{v_n d\mu(z)}{|z-\gamma_n|^2}
<\infty.\]
\end{corollary}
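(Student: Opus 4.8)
The plan is to read the corollary off Theorem~\ref{th1}. That theorem characterizes boundedness of $H_{(\Gamma,v)}$ by the \emph{conjunction} of the local condition \eqref{trivial} and the global condition \eqref{carl_2}, and \eqref{trivial} is precisely the condition asserted here. The necessity direction is then free: boundedness forces \eqref{trivial} by the ``obvious'' part of the proof of Theorem~\ref{th1}. Thus the entire content of the corollary is the implication \eqref{trivial} $\Rightarrow$ \eqref{carl_2} under the two monotonicity hypotheses, and this is what I would concentrate on.

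First I would extract two geometric consequences of \eqref{trivial}. On $\Omega_m$ the sparseness of $\Gamma$ gives $|z|\ge(|\gamma_{m-1}|+|\gamma_m|)/2\ge|\gamma_m|/2$, so $|z-\gamma_m|\le|z|+|\gamma_m|\le 3|z|$ and hence $|z|^{-2}\lesssim|z-\gamma_m|^{-2}$ on $\Omega_m$; combined with \eqref{trivial} this yields
\[ \int_{\Omega_m}\frac{d\mu(z)}{|z|^2}\lesssim\frac{1}{v_m}. \]
Since also $|z|<(|\gamma_m|+|\gamma_{m+1}|)/2\le|\gamma_{m+1}|$ on $\Omega_m$, multiplying the integrand by $|z|^2\le|\gamma_{m+1}|^2$ gives the companion bound
\[ \mu(\Omega_m)\lesssim\frac{|\gamma_{m+1}|^2}{v_m}. \]
These two estimates are exactly what feed the two sums in \eqref{carl_2}.

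Next I would use the monotonicity hypotheses. At-least-exponential growth of $v_n$ gives $\sum_{l=1}^n v_l\lesssim v_n$ and $\sum_{m>n}v_m^{-1}\lesssim v_{n+1}^{-1}$, while at-least-exponential decay of $v_n/|\gamma_n|^2$ gives $\sum_{l>n}v_l/|\gamma_l|^2\lesssim v_{n+1}/|\gamma_{n+1}|^2$ and makes the numbers $c_k=|\gamma_k|^2/v_k$ increase geometrically, say $c_j/c_{j+1}\le\sigma<1$. The first summand in \eqref{carl_2} is then immediate:
\[ \Big(\sum_{l=1}^n v_l\Big)\sum_{m>n}\int_{\Omega_m}\frac{d\mu(z)}{|z|^2}\lesssim v_n\sum_{m>n}\frac{1}{v_m}\lesssim\frac{v_n}{v_{n+1}}\le 1. \]
For the second summand I would insert both bounds, writing the result as $\sum_{m=1}^n(|\gamma_{m+1}|^2/v_m)\,c_{n+1}^{-1}$ and telescoping $c_m/c_{n+1}=\prod_{j=m}^n(c_j/c_{j+1})$. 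Using $(|\gamma_{m+1}|^2/|\gamma_m|^2)(c_m/c_{m+1})=v_{m+1}/v_m$, each term becomes $(v_{m+1}/v_m)\prod_{j=m+1}^n(c_j/c_{j+1})$; since the growth ratios $v_{m+1}/v_m$ stay bounded (automatic whenever $\sup_m|\gamma_{m+1}|/|\gamma_m|<\infty$, as in the principal examples) and each remaining ratio is at most $\sigma$, the whole expression is dominated by a constant times $\sum_{m\le n}\sigma^{n-m}$, hence bounded uniformly in $n$. Together with the bound on the first summand this establishes \eqref{carl_2}, and Theorem~\ref{th1} completes the proof.

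The main obstacle is precisely this second summand. The estimate $\mu(\Omega_m)\lesssim|\gamma_{m+1}|^2/v_m$ is sharp---a point mass near the outer rim of $\Omega_m$ saturates it---so one cannot dominate the sum $\sum_{m\le n}\mu(\Omega_m)$ by its last term alone; the argument closes only because the \emph{product} of consecutive ratios of $c_m=|\gamma_m|^2/v_m$ decays geometrically, which is the quantitative content of exponential decay of $v_n/|\gamma_n|^2$ paired with controlled growth of $v_n$. Keeping track of these competing exponential rates through the telescoping identity, rather than relying on the individual pointwise bounds, is where the care is needed.
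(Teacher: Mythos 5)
Your argument is exactly the route the paper intends: the paper's entire proof is the sentence ``Both corollaries follow immediately from Theorem \ref{th1}'', and your computation is the natural fleshing-out of it --- necessity of \eqref{trivial} comes for free, the local bounds $\int_{\Omega_m}|z|^{-2}d\mu\lesssim v_m^{-1}$ and $\mu(\Omega_m)\lesssim |\gamma_{m+1}|^2/v_m$ are the right extraction from \eqref{trivial}, geometric summation handles the first half of \eqref{carl_2}, and the telescoping of $c_k=|\gamma_k|^2/v_k$ handles the second. One microscopic repair: on $\Omega_1$ the lower bound $|z|\gtrsim|\gamma_1|$ fails, so the chain via $\int_{\Omega_1}|z|^{-2}d\mu$ breaks there; instead get $\mu(\Omega_1)\lesssim|\gamma_2|^2/v_1$ directly from $|z-\gamma_1|\le|z|+|\gamma_1|\lesssim|\gamma_2|$ on $\Omega_1$ (the first sum in \eqref{carl_2} only involves $m\ge 2$ anyway).

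The caveat you flag in passing, however, is not cosmetic: your estimate for the second summand genuinely needs $\sup_m v_{m+1}/v_m<\infty$ (which holds in particular when $\sup_m|\gamma_{m+1}|/|\gamma_m|<\infty$, but the sparseness condition \eqref{expon} explicitly permits super-exponential gaps), and this is not a removable defect of your proof --- the corollary as literally stated fails without it. Take $v_n=e^{n^2}$ and $|\gamma_n|=e^{n^2+n}$, so that $v_n$ grows and $v_n/|\gamma_n|^2$ decays at least exponentially while $v_{n+1}/v_n=e^{2n+1}\to\infty$, and let $\mu=\sum_n \mu_n\delta_{z_n}$ be exactly the saturating measure you describe: $z_n\in\Omega_n$ near the outer rim, so that $|z_n-\gamma_n|\simeq|\gamma_{n+1}|$, with $\mu_n=|z_n-\gamma_n|^2/v_n$. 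Then \eqref{trivial} holds with constant $1$, yet the single term $m=n$ of the second sum in \eqref{carl_2} is $\mu(\Omega_n)P_n\gtrsim \bigl(|\gamma_{n+1}|^2/v_n\bigr)\bigl(v_{n+1}/|\gamma_{n+1}|^2\bigr)=v_{n+1}/v_n\to\infty$; one can also see the unboundedness directly, since the unit coordinate vector $e^{(n+1)}$ gives $\|H_{(\Gamma,v)}e^{(n+1)}\|_{L^2(\mu)}^2/\|e^{(n+1)}\|_v^2\simeq v_{n+1}/v_n$. So your proof is complete precisely in the regime where the statement is true --- when the hypotheses are read so that the consecutive ratios $v_{m+1}/v_m$ stay bounded, as they do in all the paper's examples --- and the hedge in your final paragraph in fact locates a genuine implicit assumption in the corollary rather than a gap in your own reasoning.
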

\begin{corollary}\label{cortwo}
Suppose the sequence $\Gamma$ satisfies the sparseness condition
\eqref{expon} and that $\sum_n v_n<\infty$. If $\mu$ is a
nonnegative measure on $\CC$ with $\mu(\Gamma)=0$, then the operator
$H_{(\Gamma,v)}$ is bounded from $\ell^2_v$ to $L^2(\mathbb{C},\mu)$
if and only if \[ \sup_{n\ge 1} \int_{\CC} \frac{v_n
d\mu(z)}{|z-\gamma_n|^2} <\infty.\]
\end{corollary}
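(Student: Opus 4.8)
Both corollaries are deduced from Theorem~\ref{th1}: under the stated extra hypotheses one shows that \eqref{carl_2} is automatic once \eqref{trivial}, or its global strengthening, holds. I treat Corollary~\ref{cortwo}. One implication is free of the hypothesis $\sum_n v_n<\infty$: the displayed condition $\sup_n\int_\CC v_n\,d\mu(z)/|z-\gamma_n|^2<\infty$ is necessary for boundedness, since applying $H_{(\Gamma,v)}$ to the basis vector $e^{(n)}$ gives $\|H_{(\Gamma,v)}e^{(n)}\|_{L^2(\mu)}^2/\|e^{(n)}\|_v^2=\int_\CC v_n\,d\mu(z)/|z-\gamma_n|^2$, so this supremum is at most $\|H_{(\Gamma,v)}\|^2$. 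Restricting the integral to $\Omega_n$ shows that it also implies \eqref{trivial}. Hence everything reduces to the converse: the global condition together with $\sum_l v_l<\infty$ forces \eqref{carl_2}, and then Theorem~\ref{th1} delivers boundedness.

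The plan is to expand the global integral over the annuli $\Omega_m$. Writing $q=\inf_n|\gamma_{n+1}|/|\gamma_n|>1$, the sparseness \eqref{expon} yields, for $z\in\Omega_m$, the distance estimates $|z-\gamma_n|\simeq|\gamma_n|$ when $m<n$ and $|z-\gamma_n|\simeq|z|$ when $m>n$, with implied constants depending only on $q$. Consequently
\[
\int_\CC\frac{v_n\,d\mu(z)}{|z-\gamma_n|^2}\simeq\frac{v_n}{|\gamma_n|^2}\sum_{m<n}\mu(\Omega_m)+v_n\int_{\Omega_n}\frac{d\mu(z)}{|z-\gamma_n|^2}+v_n\sum_{m>n}\int_{\Omega_m}\frac{d\mu(z)}{|z|^2},
\]
and as the three summands are nonnegative, the global condition is equivalent to the finiteness of the supremum over $n$ of each separately; call these (i) (the inner-mass term), (ii) (which is precisely \eqref{trivial}), and (iii) (the outer-tail term).

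For the first term of \eqref{carl_2}, put $A_n=\sum_{m>n}\int_{\Omega_m}d\mu/|z|^2$, so that the term equals $V_{n+1}A_n$ in the notation of \eqref{not1}. The sequence $A_n$ is nonincreasing, and (iii) at $n=1$ gives $v_1A_1\lesssim1$, whence $\sup_nA_n=A_1<\infty$. Since $\sum_l v_l<\infty$ forces $V_{n+1}\le\sum_l v_l$, the first term is bounded by $(\sum_l v_l)\,A_1<\infty$.

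The second term of \eqref{carl_2}, namely $B_nP_n$ with $B_n=\sum_{m\le n}\mu(\Omega_m)$ and $P_n$ as in \eqref{not1}, is the main obstacle. A product bound $(\sup_nB_n)(\sup_nP_n)$ is worthless, because $\mu$ need not be finite and $\sup_nB_n=\mu(\CC)$ may be infinite; unlike the first term, this one cannot be split off. The mechanism to exploit is the anti-correlation encoded in (i): rewritten as $B_{n-1}\lesssim|\gamma_n|^2/v_n$, it says that a large accumulated mass $B_n$ forces the tail weights $v_l/|\gamma_l|^2$ ($l>n$), and hence $P_n$, to be correspondingly small. Estimating $B_nP_n\le B_n\sum_{l>n}v_l/|\gamma_l|^2$ and inserting the geometric separation $|\gamma_l|^2\gtrsim q^{2(l-n)}|\gamma_n|^2$, the heart of the matter is that an upward fluctuation of $(v_l)$---which could make $P_n$ heavy at some later index $l_0$---is always compensated: applying (i) at $l_0$, where $B_{l_0-1}\ge B_n$, caps that contribution, while the hypothesis $\sum_l v_l<\infty$ is exactly what prevents such heavy fluctuations from recurring at infinitely many scales $n\to\infty$. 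Carrying out this estimate gives $\sup_nB_nP_n<\infty$; combined with the first term this establishes \eqref{carl_2}, and Theorem~\ref{th1} completes the proof. I expect this second-term bound to be the one genuine difficulty. (Corollary~\ref{corone} is handled in the same spirit: the exponential growth of $v_n$ and exponential decay of $v_n/|\gamma_n|^2$ make both terms of \eqref{carl_2} geometrically convergent once \eqref{trivial} holds.)
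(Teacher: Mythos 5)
Your reduction to Theorem~\ref{th1}, the necessity direction, the annular decomposition of the global condition into (i)--(iii), and the bound for the first term of \eqref{carl_2} are all correct. The gap is exactly where you flag ``the one genuine difficulty'': the bound $\sup_n B_nP_n<\infty$ is asserted but not carried out, and the mechanism you propose for it cannot work, because conditions (i), (ii) and $\sum_l v_l<\infty$ together do \emph{not} imply $\sup_n B_nP_n<\infty$. Here is a block construction showing this. Take $|\gamma_l|=2^l$, choose $n_k\gg K_k+k$ with $K_k\to\infty$, set $B_k=4^{n_k+K_k}2^k$, realized by a single atom of mass $\simeq B_k$ placed in $\Omega_{n_k}$ at distance $\simeq 2^{n_k}$ from $\gamma_{n_k}$; put $v_l=4^l/B_k$ for $l$ in the block $(n_k,n_k+K_k]$ and $v_l=16^{-l}$ off the blocks. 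Then $\sum_l v_l\simeq\sum_k 2^{-k}<\infty$, condition (i) holds (with equality inside the blocks), condition (ii) holds, and yet $B_{n_k}P_{n_k}\ge\sum_{l=n_k+1}^{n_k+K_k}B_k\cdot(4^l/B_k)\,4^{-l}=K_k\to\infty$. So the ``heavy fluctuations'' you invoke do recur at infinitely many scales compatibly with summability of $v$: later blocks have smaller $v$-mass but larger counts, since (i) lets $B_k$ grow without bound. What fails in this example is precisely your condition (iii): there $A_n\gtrsim\sum_{j>k}4^{K_j}2^j=\infty$. Thus (iii), which you used only for the first term, is the indispensable input for the second term as well; the anti-correlation in (i) alone is a red herring.

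Moreover, with (iii) in hand the second term does ``split off'' after all, contrary to your remark. Since $|z|\le r_m:=(|\gamma_m|+|\gamma_{m+1}|)/2$ on $\Omega_m$, we have $\mu(\Omega_m)\le r_m^2\int_{\Omega_m}|z|^{-2}d\mu(z)$, hence $B_n\le\mu(\Omega_1)+r_n^2A_1$; on the other hand $P_n\le|\gamma_{n+1}|^{-2}\sum_{l>n}v_l\le r_n^{-2}\sum_l v_l$. Multiplying and using $r_n\ge r_1$,
\begin{equation*}
B_nP_n\ \le\ \Big(\sum_l v_l\Big)\Big(\frac{\mu(\Omega_1)}{r_1^2}+A_1\Big),
\end{equation*}
where $A_1\lesssim 1/v_1$ by (iii) at $n=1$ and $\mu(\Omega_1)\lesssim r_1^2/v_1$ by the global condition at $n=1$ restricted to $\Omega_1$. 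This two-line estimate is presumably why the paper states that both corollaries ``follow immediately'' from Theorem~\ref{th1} and gives no proof; your parenthetical treatment of Corollary~\ref{corone} (both terms of \eqref{carl_2} controlled by (i) and (iii) under the exponential growth/decay hypotheses) is correct in spirit.
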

Both corollaries follow immediately from Theorem \ref{th1}.
%\begin{proof}
%We need only to verify \eqref{carl_2}
%\[V_n \sum_{V_n \le V_m \le 2 V_n}
% \mu(\Omega_m)\lesssim \sum_{V_n \le V_m \le 2 V_n}
% V_m\mu(\Omega_m)\lesssim \sum_{m\geq n}\frac{\mu(\Omega_m)}{|\gamma_{m+1}|^2}\lesssim \int_\mathbb{C}\frac{d\mu}{1+|\lambda|^2}.\]
%\end{proof}

\subsection{Bessel sequences}\label{bessel}

We now switch to discrete Hilbert transforms of the form
$H_{(\Gamma,v);(\Lambda;w)}$ with $w$ the Bessel weight sequence for
$\Lambda$ with respect to $(\Gamma,v)$. As explained in
Subsection~\ref{systems}, this means that we will be dealing with
Bessel sequences for $\Hg(\Gamma,v)$.

We want to disentangle condition \eqref{carl_2}. To this end, we
split any given sequence $\Lambda$ into three disjoint sequences:
\[ \Lambda^{(0)}=\left\{\lambda\in \Lambda: \
\text{if $\lambda$ is in $\Omega_n$, then} \
\frac{v_n}{|\lambda-\gamma_n|^2}\ge
\max\left(\frac{V_n}{|\lambda|^2}, P_n\right)\right\}.\]
\[ \Lambda^{(V)}=\left\{\lambda\in \Lambda: \
\text{if $\lambda$ is in $\Omega_n$, then} \
\frac{V_n}{|\lambda|^2}> \max\left(\frac{v_n}{|\lambda-\gamma_n|^2},
P_n\right)\right\}.\]
\[ \Lambda^{(P)}=\left\{\lambda\in \Lambda: \
\text{if $\lambda$ is in $\Omega_n$, then} \ P_n >
\max\left(\frac{v_n}{|\lambda-\gamma_n|^2},\frac{V_n}{|\lambda|^2}\right)\right\}.\]
We say that a sequence $\Lambda$ is \emph{$V$-lacunary} if \[
\sup_{n} \# \left(\Lambda \cap \bigcup_{m:\ 2^n\le V_m\le 2^{n+1}}
\Omega_m\right)< \infty\] and \emph{$P$-lacunary} if
\[ \sup_{n} \# \left(\Lambda
\cap \bigcup_{m:\ 2^{-n-1}\le P_m\le 2^{-n}} \Omega_m\right)<
\infty.\]

\begin{thm}\label{thm3}
Suppose the sequence $\Gamma$ satisfies the sparseness condition
\eqref{expon} and that $v$ is an admissible weight sequence for
$\Gamma$. Let $\Lambda$ be a sequence in $(\Gamma,v)^*$, and let $w$
be the Bessel weight sequence for $\Lambda$ with respect to
$(\Gamma,v)$. Then $H_{(\Gamma,v);(\Lambda,w)}$ is a bounded
transformation if and only if $\sup_n \# (\Lambda\cap
\Omega_n)<\infty$, $\Lambda^{(V)}$ is a $V$-lacunary sequence,
$\Lambda^{(P)}$ is a $P$-lacunary sequence, and
\begin{equation}
 \sup_{n\ge 1} \left(V_n \sum_{m\ge n} \sum_{\lambda\in \Lambda^{(0)}\cap\Omega_m}
\frac{|\lambda-\gamma_m|^2}{v_m|\lambda|^{2}} +
  P_n \sum_{m\le n}\sum_{\lambda\in \Lambda^{(0)}\cap\Omega_m}
 \frac{|\lambda-\gamma_m|^2}{v_m}\right) <\infty.
 \label{carl_3}
\end{equation}
\end{thm}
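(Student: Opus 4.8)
The plan is to read Theorem~\ref{thm3} as Theorem~\ref{th1} applied to the purely atomic measure $\mu=\sum_j w_j\delta_{\lambda_j}$ and then reorganized along the splitting $\Lambda=\Lambda^{(0)}\cup\Lambda^{(V)}\cup\Lambda^{(P)}$. First I would record the two-sided estimate
\[ w_\lambda^{-1}=\sum_m\frac{v_m}{|\lambda-\gamma_m|^2}\simeq\frac{v_n}{|\lambda-\gamma_n|^2}+\frac{V_n}{|\lambda|^2}+P_n\qquad(\lambda\in\Omega_n),\]
which is exactly where sparseness enters: since $|\gamma_m|$ grows geometrically, for $\lambda\in\Omega_n$ one has $|\lambda|\simeq|\gamma_n|$, while $|\lambda-\gamma_m|\simeq|\lambda|$ for $m<n$ and $|\lambda-\gamma_m|\simeq|\gamma_m|$ for $m>n$, so the two tails collapse to $V_n/|\lambda|^2$ and $P_n$. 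Consequently $w_\lambda$ is comparable to $|\lambda-\gamma_n|^2/v_n$, to $|\lambda|^2/V_n$, or to $1/P_n$ according as $\lambda$ lies in $\Lambda^{(0)}$, $\Lambda^{(V)}$, or $\Lambda^{(P)}$; this is precisely the content of the three membership conditions.

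Next I would rewrite the two conditions of Theorem~\ref{th1} for this $\mu$. Condition \eqref{trivial} becomes $\sup_n\sum_{\lambda\in\Lambda\cap\Omega_n}v_nw_\lambda/|\lambda-\gamma_n|^2<\infty$; by the estimate above each summand is $\lesssim1$, and is $\simeq1$ exactly when $\lambda\in\Lambda^{(0)}$, so \eqref{trivial} forces $\sup_n\#(\Lambda^{(0)}\cap\Omega_n)<\infty$, the remaining counts being controlled by $V$- and $P$-lacunarity since a single $\Omega_n$ sits inside one dyadic block; together this is $\sup_n\#(\Lambda\cap\Omega_n)<\infty$. Condition \eqref{carl_2} reads $\sup_n(A_n+B_n)<\infty$ with
\[ A_n=V_{n+1}\sum_{m>n}\sum_{\lambda\in\Omega_m}\frac{w_\lambda}{|\lambda|^2},\qquad B_n=P_n\sum_{m\le n}\sum_{\lambda\in\Omega_m}w_\lambda,\]
and I would split each inner sum over the three subsequences and insert the three values of $w_\lambda$. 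The $\Lambda^{(0)}$ part reproduces \eqref{carl_3}, up to the harmless shifts $V_{n+1}\leftrightarrow V_n$ and $m>n\leftrightarrow m\ge n$ whose extra $m=n$ term is dominated by the count condition. Writing $N_m^{(V)}=\#(\Lambda^{(V)}\cap\Omega_m)$ and $N_m^{(P)}=\#(\Lambda^{(P)}\cap\Omega_m)$, the diagonal pieces are $A_n^{(V)}=V_{n+1}\sum_{m>n}N_m^{(V)}/V_m$ and $B_n^{(P)}=P_n\sum_{m\le n}N_m^{(P)}/P_m$; a dyadic summation over the blocks $2^k\le V_m<2^{k+1}$, respectively $2^{-k-1}\le P_m\le2^{-k}$, shows that $\sup_nA_n^{(V)}<\infty$ is equivalent to $\Lambda^{(V)}$ being $V$-lacunary and $\sup_nB_n^{(P)}<\infty$ to $\Lambda^{(P)}$ being $P$-lacunary. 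Since every term in $A_n+B_n$ is nonnegative, these clean pieces already yield the necessity of all four conditions.

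For sufficiency I must in addition bound the two cross pieces $B_n^{(V)}=P_n\sum_{m\le n}N_m^{(V)}|\gamma_m|^2/V_m$ and its mirror $A_n^{(P)}$, and by the $V\leftrightarrow P$ symmetry of \eqref{carl_2} it is enough to treat $B_n^{(V)}$. This is the step I expect to be the main obstacle. The membership inequality $V_m/|\gamma_m|^2>P_m$ for $\Lambda^{(V)}$-points gives $|\gamma_m|^2/V_m<1/P_m$, but the crude bound $B_n^{(V)}\lesssim P_n\sum_{m\le n}N_m^{(V)}/P_m$ is too lossy, as it would demand $P$-lacunarity of the $\Lambda^{(V)}$-points, which need not hold. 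Instead I would group the indices into the same dyadic $V$-blocks, use sparseness to replace $\sum N_m^{(V)}|\gamma_m|^2$ on a block by $|\gamma_{m_k}|^2\sum N_m^{(V)}$ with $m_k$ the top index of the block, apply the membership bound at $m_k$, and then show that the resulting sum $P_n\sum_k1/P_{m_k}$ is $O(1)$; equivalently, that $P_{m_k}$ decays geometrically along the top indices of successive $V$-blocks. That controlled decay is what the membership restriction together with \eqref{expon} and admissibility should deliver, through the same dyadic Schur-test that produced \eqref{first1}--\eqref{second2} in the proof of Theorem~\ref{th1}. Once $B_n^{(V)}$ and $A_n^{(P)}$ are controlled, adding the $\Lambda^{(0)}$, $\Lambda^{(V)}$, and $\Lambda^{(P)}$ contributions recovers \eqref{carl_2}, and with the count condition bounding \eqref{trivial} the sufficiency follows.
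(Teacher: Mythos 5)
Your route is exactly the one the paper intends: Theorem~\ref{thm3} is stated without a separate proof precisely because it is the specialization of Theorem~\ref{th1} to the atomic measure $\mu=\sum_j w_j\delta_{\lambda_j}$, and your opening estimate $w_\lambda^{-1}\simeq v_n|\lambda-\gamma_n|^{-2}+V_n|\lambda|^{-2}+P_n$ for $\lambda\in\Omega_n$ (a direct consequence of \eqref{expon}, since $|\lambda-\gamma_m|\simeq|\lambda|$ for $m<n$ and $|\lambda-\gamma_m|\simeq|\gamma_m|$ for $m>n$) is the correct bridge. Your term-by-term matching is sound: \eqref{trivial} is equivalent to the bound on $\#(\Lambda^{(0)}\cap\Omega_n)$, the $\Lambda^{(0)}$ pieces of \eqref{carl_2} reproduce \eqref{carl_3} up to the index shift you correctly dismiss via the count condition, and the dyadic-block argument identifying the diagonal pieces $A_n^{(V)}$ and $B_n^{(P)}$ with $V$- and $P$-lacunarity is right in both directions. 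The one step you flagged as the main obstacle does go through by the mechanism you name, and it deserves to be made explicit. Let $m_k<m_{k+1}$ be the top indices of two dyadic $V$-blocks containing $\Lambda^{(V)}$-points; since $|\gamma_l|\le|\gamma_{m_{k+1}}|$ for $l\le m_{k+1}$, one has $P_{m_k}-P_{m_{k+1}}=\sum_{l=m_k+1}^{m_{k+1}}v_l|\gamma_l|^{-2}\gtrsim \bigl(V_{m_{k+1}}-V_{m_k}\bigr)|\gamma_{m_{k+1}}|^{-2}$ up to harmless index shifts, and the $\Lambda^{(V)}$ membership inequality at the top point, $P_{m_{k+1}}<V_{m_{k+1}}/|\lambda|^2\simeq V_{m_{k+1}}/|\gamma_{m_{k+1}}|^2$, converts this into $P_{m_k}-P_{m_{k+1}}\gtrsim\bigl(1-V_{m_k}/V_{m_{k+1}}\bigr)P_{m_{k+1}}$. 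Between the tops of dyadic $V$-blocks two levels apart one has $V_{m_{k+1}}\ge 2V_{m_k}$, so $P_{m_k}\ge(1+c)P_{m_{k+1}}$ with $c>0$ depending only on the ratio in \eqref{expon}; this is the geometric decay of $P_{m_k}$ along (every other) $V$-block that you postulated, and it yields $P_n\sum_k 1/P_{m_k}\lesssim 1$, hence $\sup_n B_n^{(V)}<\infty$. The mirror bound for $A_n^{(P)}$ follows symmetrically: across a halving of $P$ between $P$-blocks with $\Lambda^{(P)}$-points one gets $V_{m_{k+1}}-V_{m_k}\ge|\gamma_{m_k}|^2\bigl(P_{m_k}-P_{m_{k+1}}\bigr)\gtrsim|\gamma_{m_k}|^2P_{m_k}\gtrsim V_{m_k}$, the last step being the $\Lambda^{(P)}$ membership inequality at $m_k$, so $V_{m_k}$ grows geometrically along such blocks. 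With these two observations inserted, your proof is complete and coincides with the paper's implicit argument; note also that admissibility enters only to guarantee $P_n<\infty$, not in the decay estimate itself.
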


This splitting into a ``super-thin" sequence $\Lambda^{(V)}\bigcup
\Lambda^{(P)}$ and  a ``distorted" sequence  $\Lambda^{(0)}$
represents a phenomenon not previously recorded, as far as we know.
  Corollary~\ref{corone} and Corollary~\ref{cortwo}, when
  restricted to the case of Bessel sequences, describe
two situations in which the ``super-thin'' part does not appear, for
different reasons: Corollary~\ref{corone} covers the case when $V_n$
grows exponentially and $P_n$ decays exponentially with $n$;
$\Lambda^{(V)}$ and $\Lambda^{(P)}$ can then both be ``absorbed'' in
$\Lambda^{(0)}$. Corollary~\ref{corone} covers the case when $V_n$
is uniformly bounded so that $\Lambda^{(V)}$ can only be a finite
sequence; the sequence $\Lambda^{(P)}$ can again be ``absorbed'' in
$\Lambda^{(0)}$.

We conclude that the most interesting situation occurs when either
$v_n/|\gamma_n|^2=o(P_n)$ or $v_n=o(V_n)$ and $V_n\to\infty$ as
$n\to\infty$. These two cases will be studied in depth in
Section~\ref{invertibility}.

\subsection{An example answering Baranov's
question}\label{cluster} We will now modify our construction to
obtain an example that gives a negative answer to Baranov's question
posed in Subsection~\ref{deBmod}.

We assume that $(t_n)$ is a sequence of positive numbers such that
\[ \inf_{n\ge 1} \frac{t_{n+1}}{t_n}>1.\]
In addition, we will assume that, for each positive integer $n$, we
have the following cluster of $n$ points:
\[\gamma_{n,l}=t_n+l-1, \ \ \ 1\leq l\leq n. \]
We denote this finite sequence by $\Gamma_n$ and set
\[ \Gamma=\bigcup_{n=1}^\infty \Gamma_n.\]
We will consider the simplest case when the corresponding weight
sequence $v$ is identically $1$, i.e., $v_{n,l}=1$ for every point
$\gamma_{n,l}$ in $\Gamma$.

It may be noted that if we want to describe the measures $\mu$ for
which $H_{(\Gamma,1)}$ is bounded from $\ell^2$ to $L^2(\CC\setminus
\Gamma,\mu)$, then it suffices to consider the behavior of $\mu$ in
the Carleson squares
\[S_n=\left\{z=x+iy: \ |x-\gamma_{n,1}|\le 2n,\ \ 0\le y\le 4 n\right\}. \]
Indeed, outside these squares, each cluster $\Lambda_n$ has
basically the same effect as if a single point were located at, say,
$\lambda_{n,1}$ with weight $n$. This means that Theorem~\ref{th1}
applies to describe the behavior of $\mu$ outside the squares $S_n$.
In fact, by this observation, one may obtain a complete solution to
the boundedness problem for these particular sequences $\Gamma$ and
$v$. We omit this description here and confine the discussion to a
suitable example solving Baranov's problem.

The preceding remarks indicate that the sequence $\Lambda$ should be
placed inside the union of the squares $S_n$. We set
\[\lambda_{n,s} = \gamma_{n,1}-2^s,\ \ \  0 \leq s\leq \log_2 n\] and
then $\Lambda_n=(\lambda_{n,s})_s$ with $s$ running from $0$ to
$[\log_2 n]$ (the integer part of $\log_2 n$), and
\[ \Lambda = \bigcup_{n=1}^\infty \Lambda_n.\]
We observe that we have
\[ w_{n,s}=\left(\sum_{m=1}^{\infty}\sum_{l=1}^n
\frac{1}{|\gamma_{m,l}-\lambda_{n,s}|^2}\right)^{-1}\simeq
|\lambda_{n,s}-\gamma_{n,1}|=2^s.\] These numbers constitute the
sequence $w$, which is the Bessel weight sequence for $\Lambda$ with
respect to $(\Gamma,v)$. We now make the following claim.

\begin{claim} If the sequences $\Gamma$, $\Lambda$, and $w$ are constructed as
above, then $H_{(\Gamma,1); (\Lambda,w)}$ is a bounded
transformation. \end{claim}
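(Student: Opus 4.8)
The plan is to verify the equivalent Carleson-type inequality
$\sum_{n}\sum_{s} w_{n,s}\,|f(\lambda_{n,s})|^2 \lesssim \|a\|^2$
for every $f(z)=\sum_{m,l} a_{m,l}/(z-\gamma_{m,l})$ in $\Hg(\Gamma,1)$, where $\|a\|^2=\sum_{m,l}|a_{m,l}|^2$ and I abbreviate $\|a_m\|^2=\sum_{l=1}^m |a_{m,l}|^2$. The governing geometric fact is that the clusters are well separated: since $t_n\gtrsim \rho^n$ for some $\rho>1$ while $\Gamma_n$ has width $\le n=o(t_n)$, for $m\ne n$ and all admissible $l,s$ one has $|\lambda_{n,s}-\gamma_{m,l}|\simeq|t_n-t_m|\simeq\max(t_n,t_m)$. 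I would therefore split, for each evaluation point $\lambda_{n,s}\in S_n$, the value $f(\lambda_{n,s})=L_n(\lambda_{n,s})+R_n(\lambda_{n,s})$ into the local contribution $L_n(\lambda_{n,s})=\sum_{l=1}^n a_{n,l}/(\lambda_{n,s}-\gamma_{n,l})$ of the cluster containing $\lambda_{n,s}$ and the global contribution $R_n$ of all other clusters, and estimate the two pieces separately via $|f|^2\le 2|L_n|^2+2|R_n|^2$.

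For the global part, the crude bound $|R_n(\lambda_{n,s})|\lesssim \sum_{m\ne n}(\sum_l|a_{m,l}|)/|t_n-t_m|\le \sum_{m\ne n}\sqrt{m}\,\|a_m\|/|t_n-t_m|$, which is independent of $s$, reduces matters (using $\sum_s w_{n,s}\simeq\sum_{s\le\log_2 n}2^s\simeq n$) to the inequality $\sum_n n\big(\sum_{m\ne n}\sqrt{m}\,\|a_m\|/|t_n-t_m|\big)^2\lesssim \sum_m\|a_m\|^2$. This is precisely the $\ell^2$-boundedness of the symmetric kernel $\sqrt{nm}/|t_n-t_m|$ acting on $(\|a_m\|)_m$, which I would obtain from the Schur test with unit weights: the exponential separation gives $\sqrt{nm}/|t_n-t_m|\lesssim\sqrt{nm}\,\rho^{-\max(n,m)}$, whose row sums are $O(n^2\rho^{-n})$ and hence uniformly bounded. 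This step is the quantitative form of the remark that, outside the squares $S_n$, each cluster behaves like a single mass of weight $n$ at $t_n$; equivalently it is the off-diagonal part of Theorem~\ref{th1} applied to the collapsed data $(t_n)$, $\widetilde v_n=n$, for which only the global condition \eqref{carl_2} is needed and is readily checked for the geometric sequence $(t_n)$ (the local condition \eqref{trivial} in fact diverges, which is exactly why the cluster structure cannot be ignored).

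The heart of the matter is the local estimate, which must be uniform in $n$. Here $\lambda_{n,s}-\gamma_{n,l}=-(2^s+l-1)$ and $w_{n,s}\simeq 2^s$, so the required inequality becomes $\sum_{s=0}^{[\log_2 n]} 2^s\big|\sum_{l=1}^n a_{n,l}/(2^s+l-1)\big|^2\lesssim \sum_{l=1}^n|a_{n,l}|^2$, that is, the uniform $\ell^2$-boundedness of the kernel $K(s,l)=2^{s/2}/(2^s+l-1)$. I would prove this by the Schur test with the scale-invariant test weights $p_s=2^{s/4}$ and $q_l=l^{-1/4}$: substituting $u=2^s$ and comparing the resulting sums with the finite integral $\int_0^\infty r^{-1/4}(1+r)^{-1}\,dr<\infty$ yields $\sum_l K(s,l)q_l\lesssim 2^{s/4}=p_s$ and $\sum_s K(s,l)p_s\lesssim l^{-1/4}=q_l$, with constants independent of $n$. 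Summing the resulting uniform local bound over $n$ and combining it with the global estimate produces the Carleson inequality, hence the boundedness of $H_{(\Gamma,1);(\Lambda,w)}$.

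I expect the main obstacle to be this local Schur estimate: it is the one genuinely new computation, and the delicate point is the balance of the exponents, the choice $1/4$ reflecting the fact that $K(s,l)$ is essentially scale invariant in the transition region $l\simeq 2^s$, so that the two Schur conditions hold only with a carefully matched pair of weights. The global estimate, by contrast, is forced by the exponential separation of the clusters and requires only crude bounds, so the whole difficulty is concentrated in the single-cluster inequality.
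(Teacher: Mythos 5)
Your proposal is correct and is essentially the paper's own argument: the same split of $f(\lambda_{n,s})$ into the local contribution of the cluster $\Gamma_n$ and the remainder from the other clusters, with the far part disposed of by the exponential growth of $(t_n)$ and all the difficulty concentrated in the single-cluster inequality $\sum_{s\le \log_2 n} 2^s\bigl(\sum_{l=1}^n |a_{n,l}|/(2^s+l-1)\bigr)^2 \lesssim \sum_{l=1}^n |a_{n,l}|^2$. Your Schur test with weights $p_s=2^{s/4}$, $q_l=l^{-1/4}$ is precisely the paper's weighted Cauchy--Schwarz with weight $j^{-1/2}$ followed by the dyadic bound $\sum_s 2^{s/2}/(2^s+l-1)\lesssim l^{-1/2}$, written in equivalent form; the only cosmetic difference is in the global term, where the paper uses the cruder summable bound $n^3 t_n^{-2}\|a\|_{\ell^2}^2$ while you run a (sharper, but equally routine) Schur test on the collapsed kernel $\sqrt{nm}/|t_n-t_m|$.
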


The interesting point, giving a negative answer to Baranov's
question, is that there are more than $\log_2 n$ points from
$\Lambda$ between the neighboring clusters $\Lambda_{n-1}$ and
$\Lambda_n$.

\begin{proof}[Proof of the claim]
Let $a=(a_{n,l})$ be an arbitrary $\ell^2$-sequence associated with
$\Gamma$ and set
\[
H_{(\Gamma,1)}a(\lambda)=\sum_{n=1}^\infty\sum_{l=1}^n
\frac{a_{n,l}}{\lambda-\gamma_{n,l}}.\] An application of the
Cauchy--Schwarz inequality gives
\[\sum_{s=0}^{[\log_2 n]} |H_{(\Gamma,1)}a(\lambda_{n,s})|^2 w_{n,s}
\lesssim \frac{n^3}{t_n^2}\|a\|_{\ell^2}^2 +\sum_{s=0}^{[\log_2 n]}
2^s \biggl{(}\sum_{l=1}^n
\frac{|a_{n,l}|}{|\lambda_{n,s}-\gamma_{n,l}|} \biggr{)}^2 .\] The
summation over $n$ of the first term on the right-hand side causes
no problem because $t_n$ grows at least exponentially with respect
to $n$. We therefore concentrate on the second term
\[ A_n=\sum_{s=0}^{[\log_2
n]}2^s \biggl(\sum_{l=1}^n\frac{|a_{n,l}|}{2^s+l-1} \biggr)^2.\] The
Cauchy--Schwarz inequality gives
\[ \biggl(\sum_{l=1}^n\frac{|a_{n,l}|}{2^s+l-1} \biggr)^2
\le \sum_{j=1}^n \frac{j^{-\half}}{2^s+j-1} \sum_{l=1}^n
\frac{l^{\half} |a_{n,l}|^2}{2^s+l-1}.\] The first of the two sums
on the right-hand side is bounded by a constant times
$2^{-\frac{s}2}$, and so it follows that
\[ A_n\le \sum_{s=0}^{[\log_2
n]}2^{\frac{s}2} \sum_{l=1}^n \frac{l^{\half}
|a_{n,l}|^2}{2^s+l-1}.\] Changing the order of summation and using
that
 \[\sum_{s=0}^{[\log_2
n]}\frac{2^{\frac{s}2}}{2^s+l-1}\lesssim l^{-1\slash2},\] we finally
obtain the desired estimate:
\[ A_n\lesssim \sum_{j=1}^n |a_{n,j}|^2. \]
\end{proof}

\section{Proof of Theorem~\ref{th12}}\label{feichtinger}

We use the same splitting as in Theorem~\ref{thm3} and treat the
three sequences $\Lambda^{(0)}$, $\Lambda^{(V)}$, and
$\Lambda^{(P)}$ separately. We use Lemma~\ref{duality}, i.e., we
make a splitting so that, for each subsequence $\Lambda'$ with
associated weight sequence, the adjoint transformation
$H_{(\Lambda',w');(\Gamma,v)}$ is bounded below. From now on, we
will use the notations \begin{equation}\label{WQ}
W_n=\sum_{m=1}^{n-1} w_m \ \ \ \text{and} \ \ \
Q_n=\sum_{m=n+1}^\infty \frac{w_m}{|\lambda_m|^2}.
\end{equation}

\subsection{The splitting of $\Lambda^{(0)}$} We may assume that there
is at most one point $\lambda_n$ in $\Lambda^{(0)}$ from each
annulus $\Omega_n$; we denote the corresponding weights by $w_n$.
Let $\Lambda'=(\lambda_{n_j})$ be a subsequence of $\Lambda^{(0)}$
with corresponding weight sequence $w'=(w_{n_j})$, and let
$a=(a_{n_j})$ be an arbitrary $\ell^2_{w'}$-sequence. Since
\[ |\xi-\eta|^2\ge |\xi|^2-2|\xi| |\eta| + |\eta|^2\ge \half
|\xi|^2-|\eta|^2\] for arbitrary complex numbers $\xi$ and $\eta$,
we have \[ |H_{(\Lambda',w');(\Gamma,v)}a(\gamma_{n_j})|^2 \geq
\half \frac{|a_{n_j}|^2 w_{n_j}^{2} }{|\lambda_{n_j} -
\gamma_{n_j}|^2}-2 \bigg|\sum_{l=1}^{j-1}\frac{a_{n_l}
w_{n_l}}{\overline{\gamma_{n_j}} - \overline{\lambda_{n_l}}}\bigg|^2
-
 2 \bigg|\sum_{l=j+1}^\infty \frac{a_{n_l} w_{n_l}}
 {\overline{\gamma_{n_j}} - \overline{\lambda_{n_l}}}\bigg|^2.\]
  On the other hand,
$w_{n_j}\simeq |\lambda_{n_j}-\gamma_{n_j}|^2/v_{n_j}$. Therefore,
by the definition of $\Lambda^{(0)}$, there is a positive constant
$c$ such that
\begin{equation} \|H_{(\Lambda',w');(\Gamma,v)}a\|_v^2\ge c\|a\|_{w'}^2- 2\
\sum_{j=1}^\infty \bigg[\bigg|\sum_{l=1}^{j-1}\frac{a_{n_l}
w_{n_l}}{\overline{\gamma_{n_j}} - \overline{\lambda_{n_l}}}\bigg|^2
+
  \bigg|\sum_{l=j+1}^\infty \frac{a_{n_l} w_{n_l}}
  {\overline{\gamma_{n_j}} - \overline{\lambda_{n_l}}}\bigg|^2\bigg] v_{n_j}.
\label{identical}\end{equation} Hence it remains for us to show
that, for a given $\varepsilon>0$, we may obtain
\begin{equation}
\label{first11}\sum_{j=1}^\infty  \bigg(\sum_{l=1}^{j-1}|a_{n_l}
|w_{n_l}\bigg)^2 \frac{v_{n_j}}{|\lambda_{n_j}|^2} \le \varepsilon
\sum_{j=1}^\infty |a_{n_j}|^2 w_{n_j}
\end{equation} and
\begin{equation}
\label{second22}\sum_{j=1}^\infty
\bigg(\sum_{l=j+1}^{\infty}\frac{|a_{n_l}
|w_{n_l}}{|\lambda_{n_l}|}\bigg)^2 v_{n_j} \le \varepsilon
\sum_{j=1}^\infty |a_{n_j}|^2 w_{n_j} \end{equation} for every
subsequence $\Lambda'$ in a finite splitting of $\Lambda^{(0)}$.

We proceed as in the proof of Theorem~\ref{th1}. Thus we set
$\tau_j=v_{n_j}^{\frac{1}{2}}/|\lambda_{n_j}|$ and consider first
\eqref{first11}. By duality,
\[ \left(\sum_{j=1}^\infty \tau_j^2 \bigg(\sum_{l=1}^{j-1}|a_{n_l}|
w_{n_l}\bigg)^2\right)^{\half} =\sup_{\|(c_j)\|_{\ell^2}=1}
\sum_{j=1}^\infty |c_j| \tau_{j}\sum_{l=1}^{j-1}|a_{n_l} |w_{n_l}.\]
Since
\[ \sum_{j=1}^\infty |c_j| \tau_{j}\sum_{l=1}^{j-1}|a_{n_l} |w_{n_l}
=\sum_{l=1}^\infty |a_{n_l}| w_{n_l} \sum_{j=l+1}^\infty |c_j|
\tau_j,
\] it suffices to show that the $\ell^2$-norm of
\[\alpha_l =
w_{n_l}^{\half}\sum_{j=l+1}^{\infty}|c_j| \tau_{j}\] can be made
smaller than $\varepsilon$ times the $\ell^2$-norm of $(c_j)$. To
this end, we note that the Cauchy--Schwarz inequality gives
\[ |\alpha_l|^2 \le w_{n_l} \sum_{j=l+1}^\infty |c_j|^2 W_{n_j}^{-\half}
\sum_{m=l+1}^\infty \tau_m^2 W_{n_m}^{\half}.\] Using
\eqref{carl_2}, we get
\[\sum_{m=l+1}^\infty\tau^2_m W^\half_{n_m}\lesssim
\frac{1}{W^\half_{n_{l+1}}}.\] Hence
\[ |\alpha_l|^2 \lesssim \frac{w_{n_l}}{W_{n_{l+1}}^{\half}}
\sum_{j=l+1}^\infty |c_j|^2 W_{n_j}^{-\half} .\] This gives us \[
\sum_{l=1}^\infty |\alpha_l|^2 \lesssim \sum_{j=1}^\infty |c_j|^2
W_{n_j}^{-\half}\sum_{l=1}^{j-1}\frac{w_{n_l}}{W_{n_{l+1}}^{\half}},
\] and so \eqref{first11} would follow if we could obtain
\begin{equation}
\sum_{l=1}^{j-1}\frac{w_{n_l}}{W_{n_{l+1}}^{\half}} \le c
\varepsilon W_{n_j}^{\half} \label{small1} \end{equation} for an
absolute constant $c$.

Having singled out this goal, we proceed to consider
\eqref{second22}. We note to begin with that the Cauchy--Schwarz
inequality gives
\[ \bigg(\sum_{l=j+1}^{\infty}
\frac{|a_{n_l}| w_{n_l}}{|\lambda_{n_j}|} \bigg)^2\le
\sum_{l=j+1}^\infty |a_{n_l}|^2 w_{n_l} Q_{n_{l-1}}^{\half}
\sum_{m=j+1}^\infty \frac{w_{n_m}}{Q_{n_{m-1}}^{\half}
|\lambda_{n_m}|^2}.
\]
Now our goal will be to obtain \begin{equation} \sum_{m=j+1}^\infty
\frac{w_{n_m}}{Q_{n_{m-1}}^{\half} |\lambda_{n_m}|^2}\le c
\varepsilon Q_{n_j}^{\half}. \label{small2}\end{equation} Indeed,
this would imply
\[ \begin{split} \sum_{j=1}^\infty \bigg(\sum_{l=j+1}^\infty
\frac{|a_{n_l}|w_{n_l}}{|\lambda_{n_l}|}\bigg)^2 v_{n_j} & \lesssim
\varepsilon \sum_{j=1}^\infty v_{n_j} Q_{n_j}^{\half}
\sum_{l=j+1}^\infty |a_{n_l}|^2w_{n_l} Q_{n_{l-1}}^{\half}\\
& =  \varepsilon \sum_{l=1}^\infty |a_{n_l}|^2w_{n_l}
Q_{{n_{l-1}}}^{\half} \sum_{j=1}^{l-1} v_{n_j} Q_{n_j}^{\half}.
\end{split}
\] By \eqref{carl_2}, we have
\[\sum_{j=1}^{l-1}
v_{n_j} Q_{n_j}^{\half}\lesssim \frac{1}{Q^\half_{n_{l-1}}},
\]
and so it will suffice to have \eqref{small2}.

In order to obtain the two estimates \eqref{small1} and
\eqref{small2} for every subsequence in our finite splitting of
$\Lambda^{(0)}$, we make a splitting according to the following
algorithm:
\begin{itemize}
\item[(1)] Let $\delta$ be a small positive number to be chosen later.
Select those $n$ for which $w_n> \delta W_n$. If we choose
$\Lambda'$ to consist of every $N$-th $\lambda_n$ in the
corresponding subsequence of $\Lambda^{(0)}$, then we get
\[
\sum_{l=1}^{j-1}\frac{w_{n_l}}{W_{n_{l+1}}^{\half}} \le
\frac{2}{\delta N} W_{n_j}^{\half} \] by again comparing the sum to
the integral of the function $x^{-\half}$ over the interval from $0$
to $W_{n_j}$. Thus we achieve our goal if we choose $N$ to be of the
order of magnitude $1/(\delta \varepsilon)$.
\item[(2)] Return to those points $\lambda_{n_j}$ not selected in (1).
For these we have $w_{n_j}\le \delta W_{n_j}$. Group these points
into blocks of points with consecutive indices such that for each
block $\delta \le \sum w_{n_j} W^{-1}_{n_j} < 2\delta$. Construct
new subsequences by picking every $N$-th block from this sequence of
blocks. Then some elementary estimates, again using comparisons with
an integral, lead to the following inequality:
\[ \sum_{l=1}^{j-1}
\frac{w_{n_l}}{W_{n_{l+1}}^{\half}} \le
\frac{16\delta}{1-(1-2\delta)^N} W_{n_j}^{\half},
\]
where we sum over the new subsequence. Thus it would suffice if we
choose $N$ to be roughly $1/\delta$ and $\delta$ to be a suitable
constant times $\eps$.
\item[(3)] Take one of the subsequences selected in (1) or (2) and consider
the subsequence of this subsequence, say $\Lambda'=(\lambda_{n_j})$,
along which $ w_{n_j} |\lambda_{n_j}|^{-2} > \delta Q_{n_j}$. If we
select a new subsequence by picking every $N$-th $\lambda_{n_j}$ in
the sequence $\Lambda'$, then the sum in \eqref{small2} becomes
smaller than $2/(\delta N) Q_{n_j}$ by the same argument as in (1).
Again our goal is achieved if we choose $N$ to be of the order of
magnitude $1/(\delta \varepsilon)$.
\item[(4)] Take again one of the subsequences selected in (1) or (2) and consider
those subsequences of these for which we have $w_{n_j}
|\lambda_{n_j}|^{-2} \le \delta Q_{n_j}$. Group the points in these
subsequences into blocks of points with consecutive indices such
that for each block $\delta \le \sum w_{n_j}|\lambda_{n_j}|^{-2}
Q_{n_j}^{-1} < 2\delta$. Now construct new subsequences by picking
every $N$-th block from this sequence of blocks. Then as in point
(2) we get
\[ \sum_{m=j+1}^\infty
\frac{w_{n_m}}{Q_{n_{m-1}}^{\half} |\lambda_{n_m}|^2}\le
\frac{16\delta}{1-(1-2\delta)^N} Q_{n_j}^{\half}.
\]
(Here the summation is again over the new subsequence.) We observe
once more that it would suffice if we choose $N$ to be roughly
$1/\delta$ and $\delta$ to be a suitable constant times $\eps$.
\end{itemize}

\subsection{The splitting of $\Lambda^{(V)}$} The splitting of $\Lambda^{(V)}$ is almost
identical to that of $\Lambda^{(0)}$. We will now use the estimate
\begin{equation} |H_{(\Lambda',w');(\Gamma,v)}a(\gamma_{n})|^2 \geq \half
\frac{|a_{n_j}|^2 w_{n_j}^{2} }{|\overline{\lambda_{n_j}} -
\overline{\gamma_{n}}|^2}-2 \bigg|\sum_{l=1}^{j-1}\frac{a_{n_l}
w_{n_l}}{\overline{\lambda_{n_j}} - \overline{\gamma_{n}}}\bigg|^2 -
 2 \bigg|\sum_{l=j+1}^\infty \frac{a_{n_l} w_{n_l}}{\overline{\gamma_{n}} -
\overline{\lambda_{n_l}}}\bigg|^2.\label{fund}\end{equation} The
reason we write `$\gamma_n$' instead of `$\gamma_{n_j}$' is that we
need to sum over several annuli $\Omega_n$ in order to estimate the
norm of $\|a\|_w$. Indeed, we may assume that $\lambda_{n_j}$
belongs to a union of annuli $\Omega_n$, denoted by $\Delta_j$, such
that
\[ \sum_{\gamma_n\in \Delta_j}
\frac{v_n}{|\lambda_{n_j} - \gamma_{n}|^2} \ge
\frac{1}{10}\frac{V_{n_j}}{|\lambda_j|^2},
\] with the sets $\Delta_j$ being pairwise disjoint. Therefore, by
the definition of $\Lambda^{(V)}$, there is a constant $c$ such that
\[ \sum_{\gamma_n\in \Delta_j}
|a_{n_j}|^2 w_{n_j}^{2}\, \frac{v_n }{|\lambda_{n_j} -
\gamma_{n}|^2} \ge c |a_{n_j}|^2 w_{n_j}.\] Hence we obtain
\[ \|H_{(\Lambda',w');(\Gamma,v)}a\|_v^2\ge c\|a\|_{w'}^2-
2\ \sum_{j=1}^\infty \sum_{\gamma_n \in
\Delta_j}\bigg[\bigg|\sum_{l=1}^{j-1}\frac{a_{n_l}
w_{n_l}}{\overline{\lambda_{n_j}} - \overline{\gamma_{n}}}\bigg|^2 +
  \bigg|\sum_{l=j+1}^\infty \frac{a_{n_l} w_{n_l}}{\overline{\lambda_{n_j}} -
\overline{\gamma_{n}}}\bigg|^2\bigg] v_{n_j}. \] The splitting is
then done in essentially the same way as above, repeating the
reasoning based on the estimate \eqref{identical}.

\subsection{The splitting of $\Lambda^{(P)}$} We use once more
\eqref{fund}, but this time we may assume that $\lambda_{n_j}$
belongs to a union of annuli $\Omega_n$, again denoted by
$\Delta_j$, such that
\[ \sum_{\gamma_n\in \Delta_j}
\frac{v_n}{|\lambda_{n_j} - \gamma_{n}|^2} \ge \frac{1}{10} P_{n_j},
\] with the sets $\Delta_j$ being pairwise disjoint. Therefore, by
the definition of $\Lambda^{(P)}$, there is a constant $c$ such that
\[ \sum_{\gamma_n\in \Delta_j}
|a_{n_j}|^2 w_{n_j}^{2}\, \frac{v_n }{|\lambda_{n_j} -
\gamma_{n}|^2} \ge c |a_{n_j}|^2 w_{n_j}.\] Hence we obtain
\[ \|H_{(\Lambda',w');(\Gamma,v)}a\|_v^2\ge c\|a\|_{w'}^2-
2\ \sum_{j=1}^\infty \sum_{\gamma_n \in
\Delta_j}\bigg[\bigg|\sum_{l=1}^{j-1}\frac{a_{n_l}
w_{n_l}}{\overline{\lambda_{n_j}} - \overline{\gamma_{n}}}\bigg|^2 +
  \bigg|\sum_{l=j+1}^\infty \frac{a_{n_l} w_{n_l}}{\overline{\lambda_{n_j}} -
\overline{\gamma_{n}}}\bigg|^2\bigg] v_{n_j}, \] and proceed as
outlined in the previous paragraph.

\section{The invertibility problem}\label{invertibility}

\subsection{Proof of Theorem~\ref{th13}}

It is clear that if the mapping $H_{(\Gamma,v);(\Lambda,w)}$ is
invertible, then $\Lambda$ is an exact uniqueness sequence for
$H_{(\Gamma,v)}$, which in turn implies that there is a unique
element $e=(e_n)$ in $\ell^2_v$ such that $H_{(\Gamma,v)}e$ vanishes
on $\Gamma\setminus \{\lambda_1\}$ and takes the value $1$ at
$\lambda_1$. Then
\[
G(z)=(z-\lambda_1)\sum_{n=1}^\infty \frac{e_n v_n}{z-\gamma_n}
\] is a generating function for $\Lambda$. Since by assumption
$G(\lambda_j)=0$ for $j>1$, we may write
\[ G(z)=G(z)-G(\lambda_j)=(z-\lambda_j)\sum_{n=1}^\infty
\frac{e_n v_n(\gamma_n-\lambda_1)}{(\gamma_n-\lambda_j)
(z-\gamma_n)},\] where on the right-hand side we have just
subtracted the respective series that define $G(z)$ and
$G(\lambda_j)$. Since $G$ is a generating function for $\Lambda$, it
follows that
\[ \sum_{n=1}^\infty \frac{|e_n|^2
|\gamma_n-\lambda_1|^2v_n}{|\gamma_n-\lambda_j|^2} <\infty. \] In
particular, the sequence
\[ e^{(j)}=\left( e_n \frac{\gamma_n-\lambda_1}{\gamma_n-\lambda_j}
\left(\sum_{m=1}^\infty \frac{e_m
v_m(\lambda_1-\gamma_m)}{(\lambda_j-\gamma_m)^2}\right)^{-1}\right)_n
\] will be the unique vector in $\ell^2_v$ such that
$H_{(\Gamma,v)}e^{(j)}(\lambda_l)$ is $0$ when $l\neq j$ and $1$ for
$l=j$.

To simplify the writing, we set
\[ \alpha_j =\left(\sum_{m=1}^\infty \frac{e_m
v_m(\lambda_1-\gamma_m)}{(\lambda_j-\gamma_m)^2}\right)^{-1}; \]
thus if $b=(b_1,b_2,..., b_l,0,0, ...)$ is a sequence with only
finitely many nonzero entries, then the sequence
\begin{equation}\label{linear} a=\left( e_n
(\gamma_n-\lambda_1)\sum_{j=1}^l\frac{b_j
\alpha_j}{\gamma_n-\lambda_j}\right)_n
\end{equation}
will be the unique vector in $\ell^2_v$ such that
$H_{(\Gamma,v);(\Lambda,w)}a=b$. This means that we have identified
a linear transformation, defined on a dense subset of $\ell^2_w$,
that must be the inverse transformation to
$H_{(\Gamma,v);(\Lambda,w)}$, should it exist. Hence, under the
assumption that $\Lambda$ is an exact uniqueness sequence for
$H_{(\Gamma,v)}$, a necessary and sufficient condition for
invertibility of $H_{(\Gamma,v);(\Lambda,w)}$ is that the linear
transformation defined by \eqref{linear} extends to a bounded
transformation on $\ell^2_w$. An equivalent condition is that the
transformation $H_{(\Lambda,\varpi);(\Gamma,\nu)}$ be bounded, where
\[ \nu_n=v_n |\lambda_1-\gamma_n|^2|e_n|^2  \] and
\[
\varpi_j=w_j^{-1} \left| \sum_{n=1}^\infty \frac{e_n
v_n(\lambda_1-\gamma_n)}{(\lambda_j-\gamma_n)^2}\right|^{-2}=
w_j^{-1} |\lambda_j-\lambda_1|^{-2}\left| \sum_{n=1}^\infty
\frac{e_n v_n}{(\lambda_j-\gamma_n)^2}\right|^{-2}.\]In the final
step, we used the definition of the sequence $(e_n)$.

\subsection{Localization of $\Lambda$ when $\Gamma$ is a sparse sequence}
\label{localsub}

We will for the rest of this section consider two interesting
special cases. The main point of this subsection will be that,
although $\Lambda$ may possibly have a nontrivial splitting into
three sequences $\Lambda^{(0)}$, $\Lambda^{(V)}$, $\Lambda^{(P)}$
(cf. the discussion in Subsection~\ref{bessel}), the invertibility
of $H_{(\Gamma,v);(\Lambda,w)}$ forces the sequences $\Lambda^{(V)}$
and $\Lambda^{(P)}$ to be trivial, in a sense to be made precise.

We assume as before that $\Gamma=(\gamma_n)$ is indexed by the
positive integers, and that the sequence is sparse in the sense that
\eqref{expon} holds. We retain the notation
\[ V_n=\sum_{m=1}^{n-1} v_m \ \ \ \text{and} \ \ \
P_n=\sum_{m=n+1}^{\infty} \frac{v_m}{|\gamma_m|^2} \] from the
previous section. In the discussion below, the sets
\[ D_n(v; M)=\left\{\lambda\in\Omega_n:
\ \frac{M v_n}{|\lambda-\gamma_n|^2}\ge
\max\left(\frac{V_n}{|\lambda|^2}, P_n\right)\right\},\] defined for
every admissible weight sequence $v$ and positive number $M$, will
play an essential role. If $M$ is fixed and either $v_n=o(V_n)$ or
$v_n/|\gamma_n|^2=o(P_n)$ when $n\to\infty$, then these sets are
essentially disks centered at $\gamma_n$ with radii that are
$o(|\gamma_n|)$ when $n\to \infty$. In such situations, the
splitting of a sequence $\Lambda$ into the three sequences
$\Lambda^{(0)}$, $\Lambda^{(V)}$, $\Lambda^{(P)}$ may be nontrivial,
in the sense that $\Lambda\setminus \bigcup_n D_n(v;M)$ may be an
infinite sequence for every positive $M$.

We will assume that $\Lambda=(\lambda_n)$ is a sequence disjoint
from $\Gamma$, indexed by a sequence of integers $(n_0, n_0+1,
n_0+2, ...)$ and ordered such that the moduli $|\lambda_n|$ increase
with $n$. For convenience, we assume that $\lambda_{n_0}\neq 0$. The
choice of $n_0$ is made such that $\Lambda$ is ``aligned'' with
$\Gamma$. More precisely, we will say that $\Lambda$ is a
\emph{$v$-perturbation of} $\Gamma$ if $n_0$ can be chosen such
that, for a sufficiently large $M$, $\lambda_n$ is in $D_n(v;M)$ for
all but possibly a finite number of indices $n$. If $\Lambda$ is a
$v$-perturbation of $\Gamma$, it will be implicitly understood that
$n_0$ is chosen so that the two sequences are ``aligned'' in this
way.

A $v$-perturbation $\Lambda$ of $\Gamma$ will be said to be,
respectively
\begin{itemize}
\item[-] an \emph{exact $v$-perturbation of} $\Gamma$ if $n_0=1$;
\item[-] a \emph{$v$-perturbation of $\Gamma$ of deficiency} $n_0-1$ if $n_0>1$;
\item[-] a \emph{$v$-perturbation of $\Gamma$ of excess} $1-n_0$ if $n_0<1$.
\end{itemize}

The main results of this subsection are the following two lemmas.

\begin{lemma}\label{localize}
Suppose $w$ is the Bessel weight sequence for $\Lambda$ with respect
to $(\Gamma,v)$ and that $v_n=o(V_n)$ when $n\to\infty$. If, in
addition, the transformation $H_{(\Gamma,v);(\Lambda,w)}$ is
invertible, then $\Lambda$ is either an exact $v$-perturbation of
$\Gamma$ or a $v$-perturbation of deficiency $1$.
\end{lemma}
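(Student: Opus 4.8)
The plan is to invoke Theorem~\ref{th13} to convert invertibility of $H_{(\Gamma,v);(\Lambda,w)}$ into the simultaneous boundedness of this transform and of its inverse $H_{(\Lambda,\varpi);(\Gamma,\nu)}$, and then to read off structural information about $\Lambda$ from the two statements, which constrain $\Lambda$ from above (no clustering) and from below (no gaps). Since \eqref{expon} forces $\gamma_n\to\infty$, the generating function $\Phi$ of \eqref{Phi} is meromorphic with simple poles on $\Gamma$ and simple zeros on $\Lambda$, and by \eqref{PsiPhi} the weights are $\nu_n=v_n|\Psi'(\gamma_n)|^{-2}$ and $\varpi_j=w_j^{-1}|\Phi'(\lambda_j)|^{-2}$; I will use this analytic picture to extract the asymptotics that govern the deficiency.

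First I would exploit the forward boundedness. By Theorem~\ref{thm3}, $\sup_n\#(\Lambda\cap\Omega_n)<\infty$, the sequence $\Lambda^{(V)}$ is $V$-lacunary, and \eqref{carl_3} holds. Under the hypothesis $v_n=o(V_n)$ the disks $D_n(v;M)$ have radius $o(|\gamma_n|)$, so the distorted part $\Lambda^{(0)}$ already sits in small disks centred at the points $\gamma_n$; it remains to show that only finitely many points of $\Lambda$ escape these disks and that the indexing can be aligned. The step where invertibility genuinely enters is to use the boundedness of the inverse transform to show that the super-thin parts $\Lambda^{(V)}$ and $\Lambda^{(P)}$ are finite. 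A point $\lambda_j\in\Lambda^{(V)}\cap\Omega_n$ lies outside every disk $D_m(v;M)$ and far from all $\gamma_m$, so its Bessel weight satisfies $w_j\simeq|\lambda_j|^2/V_n$; testing the inverse transform on finitely supported sequences, as in the necessity part of the proof of Theorem~\ref{th1}, then produces for each such point a contribution to the necessary conditions that would force divergence, against $V_n\to\infty$, were there infinitely many such points.

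Hence, after discarding a finite set, every point of $\Lambda$ lies in some disk $D_m(v;M)$. The per-annulus bound $\sup_n\#(\Lambda\cap\Omega_n)<\infty$ forbids clustering, while the inverse transform forbids gaps, since a $\gamma_m$ with no nearby $\lambda$ would again make the necessary conditions fail; so eventually exactly one point of $\Lambda$ accompanies each $\gamma_n$. Re-indexing so that this point carries the label $n$ defines $n_0$ and exhibits $\Lambda$ as a $v$-perturbation of $\Gamma$ of deficiency $n_0-1$.

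It then remains to bound the deficiency $d=n_0-1$ (with $d<0$ signalling excess), and this is the main obstacle. A Jensen-type count of the zeros and poles of $\Phi$ gives that, on average, $|\Phi(r)|\simeq r^{-d}$, since the matched pairs $\lambda_n\approx\gamma_n$ cancel and only the $d$ unmatched poles among the smallest $\gamma_n$ survive. Translating this growth through \eqref{PsiPhi} into the weights $\nu_n,\varpi_j$ and inserting them into the necessary conditions \eqref{carl_2} for both $H_{(\Gamma,v);(\Lambda,w)}$ and $H_{(\Lambda,\varpi);(\Gamma,\nu)}$, one finds that the two global sums become incompatible once $|d|$ is too large: excess and deficiency $\ge 2$ each force one of the two sums to diverge, leaving only $d\in\{0,1\}$. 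The delicate point, mirroring the constant $1/2$ in Example~1, is the precise tracking of the lowest-index contributions under the scaling $v_n/V_n$, where the admissibility threshold sits; this sharp bookkeeping, rather than any single inequality, is what I expect to be the crux of the argument.
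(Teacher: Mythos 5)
Your skeleton---forward boundedness constrains the geometry of $\Lambda$, invertibility pins down the deficiency---matches the paper's proof in outline, but both decisive steps contain genuine gaps. Take the localization first. The paper does not argue through $\Lambda^{(V)}$, $\Lambda^{(P)}$ and counting at all: it proves an auxiliary lemma stating that if $H_{(\Gamma,v)}$ is bounded and bounded below from $\ell^2_v$ to $L^2(\CC,\mu)$ (here $\mu=\sum_j w_j\delta_{\lambda_j}$, with boundedness below coming from surjectivity via Lemma~\ref{duality}), then $\int_{D_n(v;M)}v_n\,d\mu(z)/|z-\gamma_n|^2\ge\delta$ for all but finitely many $n$: testing on coordinate vectors gives total mass at least some $\sigma>0$, while the boundedness conditions \eqref{trivial} and \eqref{carl_2} bound the contribution from outside $D_n(v;M)$ by a constant times $\min\bigl(v_n/V_n,\,v_n/(|\gamma_n|^2P_n)\bigr)+1/M$. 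Your version of this step (``testing \dots would force divergence, against $V_n\to\infty$'') is not an argument---note that $V_n\to\infty$ is not even a hypothesis of the lemma, only $v_n=o(V_n)$ is---and your conclusion that ``eventually exactly one point of $\Lambda$ accompanies each $\gamma_n$'' does not follow from $\sup_n\#(\Lambda\cap\Omega_n)<\infty$ together with ``no gaps''; a disk could host two points while the modulus-ordered indexing drifts. The paper upgrades ``a subsequence of $\Lambda$ is a $v$-perturbation'' to ``$\Lambda$ itself is one'' by a uniqueness argument instead: by Lemma~\ref{unique1} an exact $v$-perturbation is already a uniqueness sequence for $H_{(\Gamma,v)}$, so any surplus point in $\Lambda$ would contradict the exactness of $\Lambda$ as a uniqueness sequence, which invertibility forces.

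Second, the deficiency bound, where your central claim is wrong as stated: the matched pairs in $\Phi$ do \emph{not} cancel. They contribute the product $\varrho_n=\prod_m|\gamma_m|^2/|\lambda_m|^2$, which is in general unbounded; controlling it is precisely the content of Lemma~\ref{varrhoest}, which yields only $\log\varrho_n=o(n)$, and whose proof already requires knowing that $\Lambda$ is a $v$-perturbation with $v_n=o(V_n)$. So ``$|\Phi(r)|\simeq r^{-d}$ on average'' fails; what saves the day is that the correction $e^{o(n)}$ loses to the exponential scale $|\gamma_n|^{\pm 1}$ coming from the sparseness \eqref{expon}. Moreover, the paper's mechanism for excluding the bad values of $d$ is qualitative, not the quantitative incompatibility of two \eqref{carl_2}-type conditions that you propose and explicitly leave unexecuted (``sharp bookkeeping \dots is what I expect to be the crux''): deficiency at least $2$ is excluded by Lemma~\ref{unique3}, which constructs a nonzero kernel vector---expand $c(z-\gamma_1)^{-1}(z-\gamma_2)^{-1}\prod_{m\ge3}(1-z/\lambda_m)/(1-z/\gamma_m)$ in partial fractions, verify $\ell^2_v$-membership using the sub-exponential growth of $\varrho_n/V_n$, and kill the entire remainder $h$ by growth estimates---contradicting injectivity; excess is excluded because $\Lambda$ would then properly contain an exact $v$-perturbation, hence (Lemma~\ref{unique1}) a uniqueness sequence, contradicting exactness. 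Your worry about the constant $1/2$ is also misplaced at this level: that threshold belongs to the invertibility criteria \eqref{boundrho} and \eqref{boundrho2} of Theorem~\ref{y_int_th}, while the present lemma needs only the crude $e^{o(n)}$ versus $e^{\pm cn}$ dichotomy and does not distinguish deficiency $0$ from deficiency $1$. Finally, the paper proves Lemma~\ref{localize} without invoking Theorem~\ref{th13} at all; your opening reduction is legitimate in the sparse setting (where $\gamma_n\to\infty$ makes \eqref{Phi} a generating function) but is not needed, since only boundedness, surjectivity, and exact uniqueness enter.
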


\begin{lemma}\label{localize2}
Suppose $w$ is the Bessel weight sequence for $\Lambda$ with respect
to $(\Gamma,v)$ and that $v_n/|\gamma_n|^2=o(P_n)$ when
$n\to\infty$. If, in addition, the transformation
$H_{(\Gamma,v);(\Lambda,w)}$ is invertible, then $\Lambda$ is either
an exact $v$-perturbation of $\Gamma$ or a $v$-perturbation of
$\Gamma$ of excess $1$.
\end{lemma}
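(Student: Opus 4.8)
The plan is to deduce Lemma~\ref{localize2} from Lemma~\ref{localize} by using the symmetry between a transform and its inverse furnished by Theorem~\ref{th13}. Invertibility of $H_{(\Gamma,v);(\Lambda,w)}$ gives, through that theorem, that $\Lambda$ is an exact uniqueness sequence and that the inverse is the bounded transform $H_{(\Lambda,\varpi);(\Gamma,\nu)}$, with $\nu$ and $\varpi$ as in \eqref{nu} and \eqref{varpi}. This inverse transform is in fact invertible as well, and since it is in particular surjective, the duality in Lemma~\ref{duality} forces $\nu$ to coincide, up to bounded factors, with the Bessel weight for $\Gamma$ with respect to $(\Lambda,\varpi)$. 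Thus $\Lambda$ now plays the role of the base sequence and $\Gamma$ that of the perturbing sequence, and Lemma~\ref{localize} becomes applicable to the pair $(\Lambda,\varpi)$, $(\Gamma,\nu)$ --- provided its hypothesis can be verified. Under this role reversal the perturbation sets $D_n$ and the notions of exact, deficient, and excess perturbation are interchanged: a $\varpi$-perturbation of $\Lambda$ by $\Gamma$ of deficiency $1$ (one fewer small-modulus $\gamma_n$) is precisely an excess-$1$ $v$-perturbation of $\Gamma$ by $\Lambda$. Hence the conclusion that Lemma~\ref{localize} would yield for the inverse configuration is exactly the statement of Lemma~\ref{localize2}, and it remains only to check that the hypothesis of Lemma~\ref{localize} holds there, namely that $\varpi_j = o(W_j)$ as $j\to\infty$, with $W_j = \sum_{i<j}\varpi_i$ as in \eqref{WQ}.

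Verifying $\varpi_j = o(W_j)$ is the heart of the matter. Conceptually it should follow from the assumed $v_n/|\gamma_n|^2 = o(P_n)$ through the inversion-type duality $\gamma_n\mapsto 1/\gamma_n$, $v_n\mapsto v_n/|\gamma_n|^2$, which interchanges the head sums $V_n$ with the tail sums $P_n$ and, correspondingly, makes $\varpi$ on $\Lambda$ behave like $v_n/|\gamma_n|^2$ on $\Gamma$ and $W_j$ behave like a tail sum. One cannot, however, simply conjugate by this inversion, since it is not isometric between $\ell^2_v$ and $\ell^2_{v/|\gamma|^2}$. Instead I would estimate $\varpi_j$ and $W_j$ directly from $\varpi_j = w_j^{-1}|\Phi'(\lambda_j)|^{-2}$ together with $w_j\simeq(\sum_n v_n/|\lambda_j-\gamma_n|^2)^{-1}$. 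Here one first uses that invertibility makes both $H_{(\Gamma,v);(\Lambda,w)}$ and its inverse bounded, so that Theorem~\ref{thm3}, applied in both directions, yields $\sup_n\#(\Lambda\cap\Omega_n)<\infty$ and the analogous bound for $\Gamma$ in the annuli around the $\lambda_j$; these already force a near-matching of $\Lambda$ with $\Gamma$ in which the $P$-dominant part $\Lambda^{(P)}$ is finite. On the near-matched part the generating function $\Phi=F/G$ can be estimated term by term, and the asserted comparison $\varpi_j=o(W_j)$ should reduce to the hypothesis $v_n/|\gamma_n|^2=o(P_n)$.

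Should the hypothesis translation prove too delicate, the alternative is to run the proof of Lemma~\ref{localize} again with the head sums $V_n$ and tail sums $P_n$ interchanged throughout --- equivalently, working with $\Lambda^{(P)}$ and the second term of \eqref{carl_3} in place of $\Lambda^{(V)}$ and the first; reversing the direction of summation $\sum_{m\ge n}\leftrightarrow\sum_{m\le n}$ is exactly what converts the admissible one-point defect from a deficiency into an excess. In either route the hard part will be the sharp bookkeeping of the offset $n_0$: excluding any deficiency and bounding the excess by a single point. This is a borderline effect, since under a one-index shift the relevant tail sums sit precisely on the boundary between convergence and divergence --- the same phenomenon responsible for the $c=1/2$ threshold in Example~1 and for the Kadets $1/4$ analogy noted in the introduction --- and it is at this step that the $P$-side hypothesis $v_n/|\gamma_n|^2=o(P_n)$, rather than the $V$-side one, must be used decisively.
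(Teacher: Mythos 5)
Your reduction has a circularity that I do not see how to repair. To apply Lemma~\ref{localize} to the role-reversed pair $(\Lambda,\varpi)$, $(\Gamma,\nu)$ you need (i) the standing hypothesis of that subsection, namely that the base sequence satisfies the sparseness condition \eqref{expon}, and (ii) the hypothesis $\varpi_j=o(W_j)$ for the new weights. Neither is available: sparseness of $\Lambda$, and any usable estimate of $\varpi_j$ (via the generating function $\Phi$), presuppose that $\Lambda$ is already aligned with $\Gamma$ --- which is precisely the conclusion of Lemma~\ref{localize2}. Your own sketch betrays this: you estimate $\Phi=F/G$ ``term by term'' on ``the near-matched part'', and you claim that boundedness in both directions already forces $\Lambda^{(P)}$ to be finite. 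It does not: Theorem~\ref{thm3} only forces $\Lambda^{(P)}$ to be $P$-lacunary, and bounded transforms with infinite $P$-lacunary parts exist; eliminating them under invertibility is the content of the localization lemmas, not an input to them (this is exactly the contrast with Theorem~\ref{thm3} that the paper points out right after stating the two lemmas).

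Second, even granting the prerequisites, the translation of sides is doubtful. The swap $\Gamma\leftrightarrow\Lambda$ furnished by Theorem~\ref{th13} does not exchange head sums with tail sums: by the paper's own estimates ($\varpi_n\simeq v_n\varrho_n^{-1}$, resp.\ $v_n\varrho_n^{-1}|\gamma_n|^2$, with $\varrho_n$ sub-exponential), the condition $\varpi_j=o\bigl(\sum_{i<j}\varpi_i\bigr)$ is a $V$-type condition on $v$ twisted by $\varrho$, and the $P$-side hypothesis $v_n/|\gamma_n|^2=o(P_n)$ gives no control over it. The duality that would exchange the $V$-side and the $P$-side is the inversion $z\mapsto 1/z$, which you correctly note cannot be applied directly. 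The paper instead argues directly: a measure-theoretic lemma (boundedness plus boundedness below of $H_{(\Gamma,v)}$ into $L^2(\mu)$ with $\mu=\sum_j w_j\delta_{\lambda_j}$ forces mass in the disks $D_n(v;M)$) shows $\Lambda$ contains a $v$-perturbation of $\Gamma$; then two canonical-product growth arguments (Lemmas~\ref{unique2} and~\ref{unique4}) show that on the $P$-side an excess-$1$ perturbation is a uniqueness sequence while a deficiency-$1$ perturbation is not. Since invertibility makes $\Lambda$ an exact uniqueness sequence, excess $\ge 2$ would violate exactness and deficiency $\ge 1$ would violate injectivity, which pins $\Lambda$ down to exactness or excess $1$. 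These entire-function estimates are the substance of the proof and are absent from your proposal; your closing alternative (``run the proof of Lemma~\ref{localize} with $V_n$ and $P_n$ interchanged'') is indeed the paper's route, but it is stated, not carried out.
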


Note the contrast between these results and Theorem~\ref{thm3};
$\Lambda$ has no nontrivial $V$-lacunary or $P$-lacunary
subsequences when $H_{(\Gamma,v);(\Lambda,w)}$ is an invertible
transformation. We will see in the next section that, quite
remarkably, all three cases---exactness, deficiency $1$, and excess
$1$---may occur.

The proof of the two lemmas require several steps. We begin with a
simple estimate, to be used repeatedly in what follows. It concerns
the quantity
\[ \varrho_n =\prod_{m=\max(1,
n_0)}^{n} \frac{|\gamma_m|^2}{|\lambda_m|^2}, \] which will appear
prominently in our conditions for invertibility. We use again the
notation introduced in \eqref{WQ}, i.e., we set
\[
W_n=\sum_{m=n_0}^{n-1} w_m \ \ \ \text{and} \ \ \
Q_n=\sum_{m=n+1}^{\infty} \frac{w_m}{|\lambda_m|^2}.
\]
\begin{lemma}\label{varrhoest} If $\Lambda$ is a $v$-perturbation of $\Gamma$ and
$|\gamma_n|\simeq |\lambda_n|$, then we have both
\begin{equation}\label{lembasic1}
\left|\log\frac{\varrho_m}{\varrho_n}\right|^2\lesssim
(V_{m+1}-V_{n+1})(Q_n-Q_m) \end{equation} and
\begin{equation}\label{lembasic2}
\left|\log\frac{\varrho_m}{\varrho_n}\right|^2\lesssim
(W_{m+1}-W_{n+1})(P_n-P_m) \end{equation} when $m>n$. If, in
addition, either $v_n=o(V_n)$ or $v_n/|\gamma_n|^2=o(P_n)$ when
$n\to \infty$, then $\log \varrho_n=o(n)$ when $n\to \infty$.
\end{lemma}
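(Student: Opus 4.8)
The plan is to begin from the telescoping identity $\log(\varrho_m/\varrho_n)=\sum_{l=n+1}^{m}\log\bigl(|\gamma_l|^2/|\lambda_l|^2\bigr)$, valid for $m>n$. Since $|\gamma_l|\simeq|\lambda_l|$, the quotient $|\gamma_l|/|\lambda_l|$ lies in a fixed compact subinterval of $(0,\infty)$, so $\bigabs{\log(|\gamma_l|/|\lambda_l|)}\simeq\bigabs{|\gamma_l|-|\lambda_l|}/|\lambda_l|$, and the reverse triangle inequality gives $\bigabs{|\gamma_l|-|\lambda_l|}\le|\gamma_l-\lambda_l|$. Hence the triangle inequality reduces both \eqref{lembasic1} and \eqref{lembasic2} to bounding $\bigl(\sum_{l=n+1}^{m}|\gamma_l-\lambda_l|/|\lambda_l|\bigr)^2$ from above. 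Writing out the four quantities involved, namely $V_{m+1}-V_{n+1}=\sum_{l=n+1}^m v_l$, $Q_n-Q_m=\sum_{l=n+1}^m w_l/|\lambda_l|^2$, $W_{m+1}-W_{n+1}=\sum_{l=n+1}^m w_l$ and $P_n-P_m=\sum_{l=n+1}^m v_l/|\gamma_l|^2$, we see that each right-hand side is a product of two partial sums, so the natural tool is the Cauchy--Schwarz inequality.

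The crux is the two-sided estimate $w_l\simeq|\gamma_l-\lambda_l|^2/v_l$. The upper bound $w_l\le|\gamma_l-\lambda_l|^2/v_l$ is immediate, since retaining only the diagonal term $n=l$ in $w_l^{-1}=\sum_n v_n/|\lambda_l-\gamma_n|^2$ already gives $w_l^{-1}\ge v_l/|\gamma_l-\lambda_l|^2$. For the matching lower bound I would split the sum into the ranges $n<l$, $n=l$ and $n>l$; the standard separation estimates following from \eqref{expon} and $\lambda_l\in\Omega_l$, namely $|\lambda_l-\gamma_n|\gtrsim|\lambda_l|$ for $n<l$ and $|\lambda_l-\gamma_n|\gtrsim|\gamma_n|$ for $n>l$, yield $\sum_{n<l}v_n/|\lambda_l-\gamma_n|^2\lesssim V_l/|\lambda_l|^2$ and $\sum_{n>l}v_n/|\lambda_l-\gamma_n|^2\lesssim P_l$. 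Because $\Lambda$ is a $v$-perturbation, $\lambda_l\in D_l(v;M)$ for all but finitely many $l$, so both $V_l/|\lambda_l|^2$ and $P_l$ are $\lesssim v_l/|\gamma_l-\lambda_l|^2$; adding the three pieces gives $w_l^{-1}\lesssim v_l/|\gamma_l-\lambda_l|^2$, as needed. I expect this geometric step to be the main obstacle, since it is where the sparseness and the defining inequality of $D_l(v;M)$ must be combined, and where the finitely many exceptional indices---which I would absorb into the implied constants---have to be accounted for.

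With $w_l\simeq|\gamma_l-\lambda_l|^2/v_l$ in hand, the two displayed inequalities follow by applying Cauchy--Schwarz to $\sum_{l=n+1}^m|\gamma_l-\lambda_l|/|\lambda_l|$ in two complementary ways. For \eqref{lembasic1} I would pair $v_l^{1/2}$ with $|\gamma_l-\lambda_l|/(v_l^{1/2}|\lambda_l|)$, which produces the factor $\sum_{l=n+1}^m v_l=V_{m+1}-V_{n+1}$ together with $\sum_{l=n+1}^m|\gamma_l-\lambda_l|^2/(v_l|\lambda_l|^2)\lesssim\sum_{l=n+1}^m w_l/|\lambda_l|^2=Q_n-Q_m$. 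For \eqref{lembasic2} I would instead pair $w_l^{1/2}$ with $|\gamma_l-\lambda_l|/(w_l^{1/2}|\lambda_l|)$, producing $\sum_{l=n+1}^m w_l=W_{m+1}-W_{n+1}$ together with $\sum_{l=n+1}^m|\gamma_l-\lambda_l|^2/(w_l|\lambda_l|^2)$; here $|\gamma_l-\lambda_l|^2/w_l\simeq v_l$ and $|\lambda_l|\simeq|\gamma_l|$ turn this second factor into $\lesssim\sum_{l=n+1}^m v_l/|\gamma_l|^2=P_n-P_m$.

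Finally, for the asymptotic assertion I would estimate each summand directly. The membership $\lambda_l\in D_l(v;M)$ gives $|\gamma_l-\lambda_l|^2\le M v_l|\lambda_l|^2/V_l$ and $|\gamma_l-\lambda_l|^2\le M v_l/P_l$, hence $|\gamma_l-\lambda_l|/|\lambda_l|\lesssim(v_l/V_l)^{1/2}$ and, using $|\lambda_l|\simeq|\gamma_l|$, also $|\gamma_l-\lambda_l|/|\lambda_l|\lesssim\bigl(v_l/(|\gamma_l|^2 P_l)\bigr)^{1/2}$. Under either hypothesis $v_l=o(V_l)$ or $v_l/|\gamma_l|^2=o(P_l)$, the corresponding bound shows that the general term of $\sum_l|\gamma_l-\lambda_l|/|\lambda_l|$ tends to $0$; a Ces\`aro averaging argument then gives $\bigabs{\log\varrho_n}\lesssim\sum_{l=\max(1,n_0)}^{n}|\gamma_l-\lambda_l|/|\lambda_l|=o(n)$, which is the desired conclusion.
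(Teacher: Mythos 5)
Your proposal is correct and follows essentially the same route as the paper: telescope the logarithm, linearize via $|\log x|\simeq |1-x|$ for $x$ in a compact subset of $(0,\infty)$, and apply the Cauchy--Schwarz inequality in two complementary ways, using the perturbation property to compare $w_l$ with $|\gamma_l-\lambda_l|^2/v_l$. The only differences are cosmetic: you prove explicitly the two-sided estimate $w_l\simeq|\gamma_l-\lambda_l|^2/v_l$, which the paper uses silently (it is recorded elsewhere, e.g.\ in \eqref{simplew}), and for the final claim you use a Ces\`aro averaging argument where the paper applies Cauchy--Schwarz a third time.
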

\begin{proof}Since $|\gamma_n|\simeq |\lambda_n|$, we have
\begin{equation}\label{taylor}
\biggl{|}\log\frac{\varrho_m}{\varrho_n}\biggr{|}=2
\biggl{|}\sum_{l=n+1}^m\log\frac{|\gamma_l|}{|\lambda_l|}\biggr{|}\lesssim
\sum_{l=n+1}^m\biggl{|}1-\frac{|\gamma_l|}{|\lambda_l|}\biggr{|}.\end{equation}
Hence, by the Cauchy--Schwarz inequality, we get
\[ \biggl{|}\log\frac{\varrho_m}{\varrho_n}\biggr{|}^2\lesssim
\sum_{l=n+1}^m v_l \sum_{j=n+1}^m\frac{|\gamma_j-\lambda_j|^2}{v_j
|\lambda_j|^2}, \] which is the desired estimate \eqref{lembasic1}.
Another application of the Cauchy--Schwarz inequality to
\eqref{taylor} gives  \[
\biggl{|}\log\frac{\varrho_m}{\varrho_n}\biggr{|}^2\lesssim
\sum_{l=n+1}^m \frac{|\gamma_l-\lambda_l|^2}{v_l} \sum_{j=n+1}^m
\frac{v_j}{|\gamma_j|^2},
\]
which is the second estimate \eqref{lembasic2}.

Finally, starting again from \eqref{taylor} and using the
Cauchy--Schwarz inequality a third time, we get
\[ |\log \varrho_n|^2 \lesssim n \, \sum_{l=\max(1,n_0)}^n
\frac{|\gamma_l-\lambda_l|^2}{|\lambda_l|^2} \lesssim n\
\sum_{l=\max (1,n_0)}^n
\min\left(\frac{v_l}{V_l},\frac{v_l}{|\gamma_l|^2 P_l}\right),
\]
where in the last step we used that $\Lambda$ is a $v$-perturbation
of $\Gamma$. This relation gives the last statement in the lemma,
namely  that $\log \varrho_n =o(n)$ when  either $v_n=o(V_n)$ or
$v_n/|\gamma_n|^2=o(P_n)$ as $n\to \infty$.
\end{proof}

We next prove the following lemma, which is really a corollary to
Theorem~\ref{th1}.

\begin{lemma}
Suppose that either $v_n=o(V_n)$ or $v_n/|\gamma_n|^2=o(P_n)$ when
$n\to\infty$. If, in addition, $\mu$ is a nonnegative measure on
$\CC$ with $\mu(\Gamma)=0$ and the map $H_{(\Gamma,v)}$ is both
bounded and bounded below from $\ell^2_v$ to $L^2(\CC, \mu)$, then
there exist positive numbers $M$ and $\delta$ such that
\[\int_{D_n(v;M)} \frac{v_n d\mu(z)}{|z-\gamma_n|^2} \ge \delta\]
for all but finitely many indices $n$.
\end{lemma}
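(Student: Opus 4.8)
The plan is to reduce everything to the single-vector lower bound coming from bounded-belowness and then to show that, of the mass this detects, all but a definite fraction must sit in the core $D_n(v;M)$. First I would feed the standard unit vectors $e^{(n)}$ (with $e^{(n)}_m=\delta_{mn}$) into the hypothesis that $H_{(\Gamma,v)}$ is bounded below: since $\|e^{(n)}\|_v^2=v_n$ and $H_{(\Gamma,v)}e^{(n)}(z)=v_n/(z-\gamma_n)$, there is a fixed $c>0$ with $\int_{\CC}v_n\,d\mu(z)/|z-\gamma_n|^2\ge c$ for every $n$. The quantity to be bounded from below, $\int_{D_n(v;M)}v_n\,d\mu/|z-\gamma_n|^2$, is obtained from this by discarding three pieces: $\Omega_n\setminus D_n(v;M)$ and the two families of far annuli $\Omega_m$ with $m>n$ and with $m<n$. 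It therefore suffices to show that each discarded piece is small.

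For the local error $\Omega_n\setminus D_n(v;M)$, the definition of $D_n(v;M)$ gives directly that $v_n/|z-\gamma_n|^2<M^{-1}\max(V_n/|z|^2,P_n)\le M^{-1}(V_n/|z|^2+P_n)$ there, so the piece is at most $M^{-1}(V_n\int_{\Omega_n}d\mu/|z|^2+P_n\mu(\Omega_n))$. Both summands are controlled by the boundedness of $H_{(\Gamma,v)}$: applying \eqref{carl_2} at indices $n-1$ and $n$ respectively yields $V_n\int_{\Omega_n}d\mu/|z|^2\le C$ and $P_n\mu(\Omega_n)\le C$, so this piece is $\le 2C/M$. Choosing $M$ large makes it smaller than $c/4$ uniformly in $n$.

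For the far annuli I would first record the geometric consequences of \eqref{expon}: writing $\rho=\inf_n|\gamma_{n+1}|/|\gamma_n|>1$, one has $|z-\gamma_n|\simeq|z|$ for $z\in\Omega_m$ with $m>n$, and $|z-\gamma_n|\simeq|\gamma_n|$ for $z\in\Omega_m$ with $m<n$. On the side selected by the hypothesis this already suffices: if $v_n=o(V_n)$, then the $m>n$ piece is $\lesssim v_n\sum_{m>n}\int_{\Omega_m}d\mu/|z|^2\lesssim v_n/V_{n+1}$ by \eqref{carl_2}, which is $o(1)$; symmetrically, if $v_n/|\gamma_n|^2=o(P_n)$, then the $m<n$ piece is $\lesssim (v_n/|\gamma_n|^2)\sum_{m<n}\mu(\Omega_m)\lesssim (v_n/|\gamma_n|^2)/P_{n-1}$, which is $o(1)$ since $P_{n-1}\ge P_n$.

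The main obstacle is the remaining far side—the $m<n$ annuli in the case $v_n=o(V_n)$, and the $m>n$ annuli in the case $v_n/|\gamma_n|^2=o(P_n)$—because the naive estimate only bounds it by the constant $C$ of \eqref{carl_2}, not by $o(1)$. Here I would use sparseness a second time. In the first case, converting $\mu(\Omega_m)\le|\gamma_{m+1}|^2\int_{\Omega_m}d\mu/|z|^2$ and invoking $|\gamma_{m+1}|^2/|\gamma_n|^2\le\rho^{-2(n-m-1)}$ turns the $m<n$ piece into $\lesssim v_n\sum_{m<n}\rho^{-2(n-m-1)}\int_{\Omega_m}d\mu/|z|^2\lesssim v_n\sum_{m<n}\rho^{-2(n-m-1)}/V_m$, after bounding $\int_{\Omega_m}d\mu/|z|^2\le C/V_m$ by \eqref{carl_2}. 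The geometric weight concentrates this sum at $m\approx n$, where $v_n=o(V_n)$ forces $V_{m+1}/V_m\to1$ and hence $V_m\simeq V_n$, so the series is dominated by its diagonal term and the whole piece is $\lesssim v_n/V_n=o(1)$. The case $v_n/|\gamma_n|^2=o(P_n)$ is handled in the mirror-image way, bounding $\int_{\Omega_m}d\mu/|z|^2\lesssim\mu(\Omega_m)/|\gamma_m|^2$, using $|\gamma_n|^2/|\gamma_m|^2\le\rho^{-2(m-n)}$ together with $\mu(\Omega_m)\le C/P_m$, and exploiting $P_m/P_{m+1}\to1$ to sum the geometric series to $\lesssim (v_n/|\gamma_n|^2)/P_n=o(1)$. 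Putting the three estimates together gives $\int_{D_n(v;M)}v_n\,d\mu/|z-\gamma_n|^2\ge c-c/4-o(1)\ge c/4$ for all but finitely many $n$, which is the assertion with $\delta=c/4$.
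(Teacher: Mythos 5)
Your proof is correct, and its skeleton is the same as the paper's: the uniform lower bound $c$ from single unit vectors, the $O(1/M)$ estimate on $\Omega_n\setminus D_n(v;M)$ via the defining inequality of $D_n(v;M)$ and \eqref{carl_2}, and $o(1)$ bounds on the two families of far annuli. Where you genuinely add something is in the far-annuli terms. The paper simply asserts that both cross sums are $\lesssim\min\bigl(v_n/V_n,\,v_n/(|\gamma_n|^2P_n)\bigr)$ as a consequence of sparseness and boundedness, with no further argument; you instead prove exactly the half of each minimum that the standing hypothesis requires, and your diagnosis that the one-line application of \eqref{carl_2} fails on one side is accurate. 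For instance, with $|\gamma_m|=2^m$, $v_m=1$ for odd $m$ and $v_m=4^m/m^2$ for even $m$, one has $v_n/|\gamma_n|^2=o(P_n)$ while $v_n/V_{n+1}\simeq 1$ along even $n$, so for the $m>n$ sum the naive bound $C v_n/V_{n+1}$ is genuinely not $o(1)$ and your finer estimate is needed; a symmetric phenomenon occurs for the $m<n$ sum when $v_n=o(V_n)$ but $V_n$ stays bounded while $|\gamma_n|^2P_{n-1}\to 0$. Your remedy---converting $\int_{\Omega_m}d\mu/|z|^2$ and $\mu(\Omega_m)$ into one another at the price of a factor $|\gamma_m|^{2}$, applying \eqref{carl_2} index by index, and then summing the resulting geometric series using the near-constancy of consecutive ratios ($V_{m+1}/V_m\to 1$, resp. $P_m/P_{m+1}\to 1$, which is exactly what the hypothesis provides) against the factor $\rho^{-2|m-n|}$ coming from \eqref{expon}---is precisely the justification the paper's terse ``$\lesssim\min(\cdot,\cdot)$'' needs, so in this respect your write-up is more complete than the paper's own proof. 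Only one routine point deserves a line: the finitely many indices $m$ below the threshold where the ratio bound $V_{m+1}\le(1+\eps)V_m$ (resp. $P_{m-1}\le(1+\eps)P_m$) becomes valid contribute $O(v_n\rho^{-2n})=o(1)$ by admissibility, which is harmless since the conclusion tolerates finitely many exceptional $n$.
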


\begin{proof}
Applying the assumption about boundedness below to any sequence with
only one nonzero entry, we find that there is a positive number
$\sigma$ independent of $n$ such that
\[ \int_\mathbb{C}\frac{v_n d\mu(z)}
{|z-\gamma_n|^2} \geq\sigma \] for every $n$.  On the other hand,
since $|\gamma_n|$ grows at least exponentially and $H_{(\Gamma,v)}$
is bounded from $\ell^2_v$ to $L^2(\CC,\mu)$, we have
\[\sum_{m=n+1}^{\infty}\int_{\Omega_m}\frac{v_n d\mu(z)}
{|z-\gamma_n|^2}\lesssim v_n\, \sum_{m=n+1}^{\infty}\int_{\Omega_m}
\frac{d\mu(z)}{|z|^2}\lesssim \min\left(\frac{v_n}{V_n},
\frac{v_n}{|\gamma_n|^2 P_n}\right)\] and
\[\sum_{m=1}^{n-1}\int_{\Omega_m}\frac{v_n d\mu(z)}{|z-\gamma_n|^2}
\lesssim \frac{v_n}{|\gamma_n|^2}\sum_{m=1}^{n-1} \int_{\Omega_m}
d\mu(z)\lesssim \min\left(\frac{v_n}{V_n}, \frac{v_n}{|\gamma_n|^2
P_n}\right).\] We also have
\[\int_{\Omega_n\setminus D_n(v;M)}
\frac{v_n d\mu(z)}{|z-\gamma_n|^2} \le \frac{1}{M} \int_{\Omega_n}
\max \left(\frac{V_n}{|\lambda|^2}, P_n \right) d\mu(z)\lesssim
\frac{1}{M}, \] again using the condition for boundedness of  the
map $H_{(\Gamma,v)}: \ell^2_v\to L^2(\CC, \mu)$. The result follows
with $\delta=\sigma/2$ if we choose a sufficiently large $M$.
\end{proof}

The preceding lemma shows that if the transformation
$H_{(\Gamma,v);(\Lambda,w)}$ is invertible, then $\Lambda$ must
contain a subsequence that is a $v$-perturbation of $\Gamma$. The
next two lemmas show that $\Lambda$ itself must be a
$v$-perturbation of $\Gamma$.
\begin{lemma}\label{unique1}
Suppose that $v_n=o(V_n)$ when $n\to\infty$. If, in addition,
$\Lambda$ is an exact $v$-perturbation of $\Gamma$, then $\Lambda$
is a uniqueness sequence for $H_{(\Gamma,v)}$.
\end{lemma}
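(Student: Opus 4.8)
The plan is to prove the contrapositive: assuming $a=(a_n)\in\ell^2_v$ is such that $f=H_{(\Gamma,v)}a$ vanishes on all of $\Lambda=(\lambda_n)_{n\ge1}$ (recall $n_0=1$ because the perturbation is exact), I will show that $a=0$. First I would record the geometry forced by the hypotheses. Since $\Lambda$ is a $v$-perturbation and $v_n=o(V_n)$, the defining inequality of $D_n(v;M)$ gives $|\lambda_n-\gamma_n|^2\le Mv_n|\lambda_n|^2/V_n$ and $|\lambda_n-\gamma_n|^2\le Mv_n/P_n$ for all large $n$; the first, together with $v_n=o(V_n)$, forces $|\lambda_n-\gamma_n|=o(|\lambda_n|)$ and hence $|\gamma_n|\simeq|\lambda_n|$, so that Lemma~\ref{varrhoest} applies and in particular $\log\varrho_n=o(n)$.

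The heart of the argument is a recursion for the partial sums $T_n=\sum_{m<n}a_mv_m$ (note $T_1=0$). Writing the vanishing condition $f(\lambda_n)=0$ as $a_nv_n/(\lambda_n-\gamma_n)=-\sum_{m\ne n}a_mv_m/(\lambda_n-\gamma_m)$, and expanding $1/(\lambda_n-\gamma_m)=1/\lambda_n+\gamma_m/[\lambda_n(\lambda_n-\gamma_m)]$ in the backward part $m<n$, I isolate the coherent leading term $T_n/\lambda_n$ and arrive at
\[
T_{n+1}=\frac{\gamma_n}{\lambda_n}\,T_n+E_n,\qquad E_n=-(\lambda_n-\gamma_n)(B_n+F_n),
\]
where $B_n=\lambda_n^{-1}\sum_{m<n}a_mv_m\gamma_m/(\lambda_n-\gamma_m)$ is the higher-order backward correction and $F_n=\sum_{m>n}a_mv_m/(\lambda_n-\gamma_m)$ is the forward tail. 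Because $\Gamma$ obeys \eqref{expon} with ratio $q>1$, one has $|\gamma_m/(\lambda_n-\gamma_m)|\lesssim q^{-(n-m)}$ for $m<n$, so $B_n$ is a geometrically weighted average of the earlier coefficients; the forward tail is controlled by Cauchy--Schwarz, $|F_n|\lesssim(\sum_{m>n}|a_m|^2v_m)^{1/2}P_n^{1/2}$, and the prefactors $|\lambda_n-\gamma_n|$ are estimated by the two $D_n(v;M)$ bounds above. The step I expect to be routine but tedious is to combine these with $a\in\ell^2_v$ to show that the series $\sum_kE_k/\Pi_k$ converges, where $\Pi_n=\prod_{m\le n}(\gamma_m/\lambda_m)$, so $|\Pi_n|=\varrho_n^{1/2}$.

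Solving the recursion with the initial value $T_1=0$ gives the closed form $T_{n+1}=\Pi_n\sum_{k=1}^{n}E_k/\Pi_k$. Since $\log\varrho_n=o(n)$, the factor $|\Pi_n|=\varrho_n^{1/2}$ grows and decays subexponentially. If $a\ne0$ I would track the renormalised sums $\sigma_n=\sum_{k\le n}E_k/\Pi_k$ and show they stabilise, so that $|T_n|\simeq\varrho_{n-1}^{1/2}|\sigma_\infty|$ with $\sigma_\infty\ne0$ (otherwise a downward induction using $T_1=0$ propagates vanishing to all the $a_m$). This growth of $|T_n|$ is then confronted with two constraints coming from $a\in\ell^2_v$: the elementary bound $|T_n|^2\le\|a\|_v^2\,V_n$ from Cauchy--Schwarz, and the summability $\sum_n|a_n|^2v_n<\infty$ fed back through $a_nv_n=T_{n+1}-T_n\simeq-(\lambda_n-\gamma_n)T_n/\lambda_n$. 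In the borderline regime these cooperate exactly as in the Example after Theorem~\ref{th13}: when the perturbation saturates $D_n(v;M)$ one has $\varrho_{n-1}^{1/2}\simeq V_n^{1/2}$, whence $|T_n|\simeq V_n^{1/2}$ and $\sum_n|a_n|^2v_n\gtrsim\sum_n v_n/V_n=\infty$, a contradiction; the general case is handled by comparing $\varrho_{n-1}$ with $V_n$ using $\log\varrho_n=o(n)$ to rule out the surviving possibilities.

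The main obstacle is the two-sided coupling: the error $E_n$ depends on the \emph{future} coefficients through $F_n$, so the recursion is not genuinely first order and one cannot simply push $T_1=0$ forward. Handling this coupling, and turning the subexponential behaviour of $\varrho_n$ into a genuine contradiction with $\ell^2_v$-membership, is where the hypotheses do the real work: it is the little-$o$ condition $v_n=o(V_n)$ (rather than merely $O(V_n)$) and the vanishing Lyapunov exponent $\log\varrho_n=o(n)$ supplied by Lemma~\ref{varrhoest} that make the front-determined growth $|T_n|\simeq\varrho_{n-1}^{1/2}$ irreconcilable with $a\in\ell^2_v$. This is the uniqueness-side counterpart of the sharp constant $c=\half$ in the Kadec-type Example, where precisely the little-$o$ fails.
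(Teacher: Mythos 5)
Your reduction to the recursion $T_{n+1}=(\gamma_n/\lambda_n)T_n+E_n$ is algebraically correct, but the endgame has a genuine gap: the contradiction you want to extract by confronting $|T_n|\simeq\varrho_{n-1}^{1/2}|\sigma_\infty|$ with $|T_n|^2\le\|a\|_v^2V_n$ and with $\sum_n|a_n|^2v_n<\infty$ does not exist in all regimes the lemma covers. Consider a purely angular perturbation, $|\lambda_n|=|\gamma_n|$ for all $n$, so that $\varrho_n\equiv1$ and $\Pi_n$ is unimodular, and let $|\lambda_n-\gamma_n|$ shrink much faster than the permitted scale $(v_n/V_n)^{1/2}|\lambda_n|$. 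Then the putative asymptotics $|T_n|\simeq|\sigma_\infty|$ and $a_nv_n\simeq-(\lambda_n-\gamma_n)T_n/\lambda_n$ satisfy both of your constraints: the Cauchy--Schwarz bound trivially, and $\sum_n|a_n|^2v_n\simeq|\sigma_\infty|^2\sum_n|\lambda_n-\gamma_n|^2/(v_n|\lambda_n|^2)$, which can be made convergent. So the ``surviving possibilities'' you propose to rule out by comparing $\varrho_{n-1}$ with $V_n$ via $\log\varrho_n=o(n)$ are not ruled out by the data you retain; the recursion, which keeps only the moduli $|\gamma_n/\lambda_n|$ and the leading backward term, has discarded the rigidity that actually forces $a=0$. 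Two subsidiary claims are also unsupported: (i) that $a\neq0$ forces $\sigma_\infty\neq0$ --- if $\sigma_\infty=0$ you only get $T_{n+1}=-\Pi_n\sum_{k>n}E_k/\Pi_k$, and no ``downward induction'' is available because each $E_k$ contains the forward tail $F_k$; and (ii) that saturation of $D_n(v;M)$ gives $\varrho_{n-1}\simeq V_n$ --- membership in $D_n(v;M)$ allows $|\lambda_n-\gamma_n|\simeq(Mv_n/V_n)^{1/2}|\lambda_n|$, a full square root larger than the radial scale $|\gamma_n|/|\lambda_n|-1\simeq\half\, v_n/V_n$ of the Example, so $\log\varrho_n$ may be as large as $\sum_{m\le n}(v_m/V_m)^{1/2}\gg\log V_n$.

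The paper's proof is instead complex-analytic and very short. Since the perturbation is exact, the quotient
\[
J(z)=\Big(\sum_{n=1}^\infty\frac{a_nv_n}{z-\gamma_n}\Big)\Big/\prod_{m=1}^\infty\frac{1-z/\lambda_m}{1-z/\gamma_m}
\]
is entire: the poles at the $\gamma_m$ cancel, and the zeros of the product at the $\lambda_m$ are matched by the hypothesis that $H_{(\Gamma,v)}a$ vanishes on $\Lambda$. For $z$ in $\Omega_n\setminus D_n(v;M)$ with $M$ large, Cauchy--Schwarz gives $|H_{(\Gamma,v)}a(z)|^2\lesssim V_n/|z|^2$, while the product has modulus comparable to $\varrho_n^{1/2}$ there, so $|J(z)|^2\varrho_n\lesssim V_n/|z|^2$. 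Now $v_n=o(V_n)$ makes $V_n$ grow subexponentially while $|z|\gtrsim|\gamma_n|$ grows at least exponentially by \eqref{expon}, so $V_n/|z|^2\le e^{-\delta n}$, and Lemma~\ref{varrhoest} gives $\varrho_n=e^{o(n)}$; hence the maximum of $|J|$ on $\Omega_n\setminus D_n(v;M)$ tends to $0$, so $J\equiv0$ by the maximum principle, and computing residues at the $\gamma_n$ gives $a=0$. Note that $\log\varrho_n=o(n)$ enters here not as a Lyapunov exponent for a scalar recursion but as a growth bound certifying that an entire function vanishes identically --- this global analytic input is exactly what your discrete scheme lacks, and without a substitute for it the proposal does not close.
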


\begin{proof}
We argue by contradiction. So suppose there is a nonzero vector
$a=(a_n)$ in $\ell^2_v$ such that $H_{(\Gamma,v)}a$ vanishes on
$\Lambda$. This means that there is a nonzero entire function $J(z)$
such that
\[ \sum_{n=1}^\infty \frac{a_n v_n} {z-\gamma_n}= J(z)
 \prod_{m=1}^\infty
\frac{1-z/\lambda_m}{1-z/\gamma_m} \] for every $z$ in $\CC\setminus
\Gamma$. If we now choose $M$ sufficiently large, then we have
\[ \frac{V_n}{|z|^2} \gtrsim |J(z)|^2 \varrho_n \]
for $z$ in $\Omega_n\setminus D_n(v;M)$. Since $v_n=o(V_n)$ when
$n\to\infty$, the left-hand side is bounded by $e^{-\delta n}$ for
some positive $\delta$, while, by Lemma~\ref{varrhoest},
$\varrho_n=e^{o(n)}$ when $n\to\infty$. Thus the maximum of $|J(z)|$
in $\Omega_n\setminus D_n(v;M)$ tends to $0$ when $n\to \infty$,
which is a contradiction unless $J(z)\equiv 0$.
\end{proof}

\begin{lemma}\label{unique2}
Suppose that $v_n/|\gamma_n|^2=o(P_n)$ when $n\to\infty$. If, in
addition, $\Lambda$ is a $v$-perturbation of $\Gamma$ of excess $1$,
then $\Lambda$ is a uniqueness sequence for $H_{(\Gamma,v)}$.
\end{lemma}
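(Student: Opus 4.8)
The plan is to argue by contradiction, following the scheme of Lemma~\ref{unique1} but using the excess to manufacture the geometric decay that $P_n$ alone cannot supply. Suppose there is a nonzero $a=(a_n)$ in $\ell^2_v$ such that $F:=H_{(\Gamma,v)}a$ vanishes at every point of $\Lambda=(\lambda_m)_{m\ge 0}$. Since $|\gamma_n|\to\infty$ at least exponentially, $F$ is meromorphic in $\CC$ with simple poles contained in $\Gamma$, and both products $\prod_{m\ge 0}(1-z/\lambda_m)$ and $\prod_{n\ge 1}(1-z/\gamma_n)$ converge because $\sum_n|\gamma_n|^{-1}<\infty$ and, $\Lambda$ being a $v$-perturbation, $|\lambda_n|\simeq|\gamma_n|$. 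Hence
\[ B(z)=\frac{\prod_{m\ge 0}(1-z/\lambda_m)}{\prod_{n\ge 1}(1-z/\gamma_n)} \]
is meromorphic with zeros exactly on $\Lambda$ and poles exactly on $\Gamma$, so $J:=F/B$ extends to a nonzero entire function. The decisive structural feature is that, $\Lambda$ having excess $1$, the numerator of $B$ carries one factor more than the denominator; consequently $B$ grows like an extra power of $z$, and it is this extra power that will rescue the argument.

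I would next quantify $B$ and $F$ on the annuli $\Omega_n$. Splitting the defining products of $B$ into the ranges $m=0$, $1\le m<n$, $m=n$, and $m>n$, and using $|\lambda_m|\simeq|\gamma_m|$ together with the sparseness \eqref{expon}, one gets the comparison
\[ |B(z)|\simeq \frac{|z|}{|\lambda_0|}\,\varrho_n^{\half}\,\frac{|z-\lambda_n|}{|z-\gamma_n|} \qquad (z\in\Omega_n). \]
A parallel splitting of $F$ and two applications of the Cauchy--Schwarz inequality, controlling the diagonal term $m=n$ by the defining inequality of $D_n(v;M)$, give $|F(z)|^2\lesssim V_n/|z|^2+P_n$ on $\Omega_n\setminus D_n(v;M)$. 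Combining the two, at points of $\Omega_n\setminus D_n(v;M)$ lying at distance $\gtrsim|\gamma_n|$ from $\gamma_n$ (where $|z-\lambda_n|\simeq|z-\gamma_n|$, since $\lambda_n\in D_n(v;M)$) we obtain
\[ |J(z)|^2\simeq |F(z)|^2\,\frac{|\lambda_0|^2}{|z|^2\varrho_n}\,\frac{|z-\gamma_n|^2}{|z-\lambda_n|^2}\lesssim \frac{V_n}{|\gamma_n|^4\varrho_n}+\frac{P_n}{|\gamma_n|^2\varrho_n}. \]

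To finish I would evaluate this on a circle $C_n=\{|z|=r_n\}$ chosen inside $\Omega_n$ at distance $\gtrsim|\gamma_n|$ from $\gamma_n$, which is possible because consecutive moduli are separated by a fixed factor by \eqref{expon}; this choice also keeps $C_n$ away from the near-cancelling zero of $B$ at $\lambda_n$. By admissibility \eqref{admiss} one has $V_n\le|\gamma_{n-1}|^2\sum_{m<n}v_m/|\gamma_m|^2\lesssim|\gamma_n|^2$, while \eqref{expon} gives $|\gamma_n|^{-2}\le Ce^{-cn}$ for some $c>0$; and, crucially, the hypothesis $v_n/|\gamma_n|^2=o(P_n)$ lets us invoke the last assertion of Lemma~\ref{varrhoest} to get $\varrho_n=e^{o(n)}$. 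Since $P_n\to 0$, both terms above are $\lesssim e^{-(c-\eps)n}\to 0$, so $\max_{C_n}|J|\to 0$ while $r_n\to\infty$; the maximum modulus principle then forces $J\equiv 0$, contradicting $J\ne 0$. This contradiction shows that no such $a$ exists, i.e., $\Lambda$ is a uniqueness sequence for $H_{(\Gamma,v)}$.

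The step I expect to be the main obstacle is securing the geometric decay. Unlike the $V$-case of Lemma~\ref{unique1}, where $V_n/|\gamma_n|^2$ is already exponentially small, here $P_n$ decays only subexponentially and therefore cannot by itself dominate the factor $\varrho_n=e^{o(n)}$; the whole argument hinges on the extra power of $z$ contributed by the excess factor $1-z/\lambda_0$, which produces the genuinely geometric gain $|\gamma_n|^{-2}$. Making this gain rigorous---pinning down the comparison for $|B|$ including the innermost factor, and choosing the circles $C_n$ so as to avoid the coincident zeros of $F$ and $B$ at $\lambda_n$, where the crude bound on $|F|$ would otherwise be divided by a vanishing $|B|$---is the delicate part.
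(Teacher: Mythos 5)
Your proof is correct and follows essentially the same route as the paper's: argue by contradiction, factor $H_{(\Gamma,v)}a$ as a nonzero entire function $J$ times the quotient product carrying the extra zero at $\lambda_0$ (this is exactly the paper's identity $\sum_n a_nv_n/(z-\gamma_n)=J(z)(z-\lambda_0)\prod_{m\ge1}(1-z/\lambda_m)/(1-z/\gamma_m)$), bound $|H_{(\Gamma,v)}a(z)|^2\lesssim V_n/|z|^2+P_n$ on $\Omega_n\setminus D_n(v;M)$, and use $\varrho_n=e^{o(n)}$ from Lemma~\ref{varrhoest} together with the exponential growth of $|\gamma_n|$ to force $J\equiv 0$. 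The only presentational differences are that the paper absorbs the term $V_n/|z|^2$ into $P_n$ and concludes directly from the maximum of $|J|$ on $\Omega_n\setminus D_n(v;M)$ tending to zero, whereas you keep the two terms separate (handling $V_n\lesssim|\gamma_n|^2$ via admissibility) and restrict to explicit circles $C_n$ before applying the maximum principle.
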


\begin{proof}
We argue again by contradiction and assume that there is a nonzero
vector $a=(a_n)$ in $\ell^2_v$ such that $H_{(\Gamma,v)}a$ vanishes
on $\Lambda$. In this case, it follows that there is a nonzero
entire function $J(z)$ such that
\[ \sum_{n=1}^\infty \frac{a_n v_n} {z-\gamma_n}= J(z)(z-\lambda_0)
 \prod_{m=1}^\infty
\frac{1-z/\lambda_m}{1-z/\gamma_m} \] for every $z$ in $\CC\setminus
\Gamma$. If we now choose $M$ sufficiently large, then we have
\[ P_n \gtrsim |J(z)|^2 |z|^2 \varrho_n \]
for $z$ in $\Omega_n\setminus D_n(v;M)$. Since
$v_n/|\gamma_n|^2=o(P_n)$ when $n\to\infty$, we have that
$P_n/|z|^2$ is bounded by $e^{-\delta n}$ for some positive number
$\delta$, while, by Lemma~\ref{varrhoest}, $\varrho_n=e^{o(n)}$ when
$n\to\infty$. Thus the maximum of $|J(z)|$ in $\Omega_n\setminus
D_n(v;M)$ tends to $0$ when $n\to \infty$, which is a contradiction
unless $J(z)\equiv 0$.
\end{proof}
We finally prove two lemmas that, together with the previous three
lemmas, give the precise restrictions stated in Lemma~\ref{localize}
and Lemma~\ref{localize2}.
\begin{lemma}\label{unique3}
Suppose that $v_n=o(V_n)$ when $n\to\infty$. If, in addition,
$\Lambda$ is a $v$-perturbation of $\Gamma$ of deficiency $2$, then
$\Lambda$ is not a uniqueness sequence for $H_{(\Gamma,v)}$.
\end{lemma}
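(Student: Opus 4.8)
The plan is to produce explicitly a nonzero element of $\Hg(\Gamma,v)$ that vanishes on all of $\Lambda$; by the correspondence recorded in Subsection~\ref{Hspace} this is the same as exhibiting a nonzero $a=(a_n)$ in $\ell^2_v$ for which $H_{(\Gamma,v)}a$ vanishes on $\Lambda$. Deficiency $2$ means $n_0=3$, so I write $\Lambda=(\lambda_n)_{n\ge 3}$ with $\lambda_n$ matched to $\gamma_n$, and set
\[ f(z)=\frac{1}{(1-z/\gamma_1)(1-z/\gamma_2)}\prod_{m=3}^{\infty}\frac{1-z/\lambda_m}{1-z/\gamma_m}, \]
both products converging because \eqref{expon} forces $\sum_n|\gamma_n|^{-1}<\infty$ and $\sum_n|\lambda_n|^{-1}<\infty$. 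This $f$ is meromorphic with simple poles exactly at the points of $\Gamma$ and zeros exactly at the points of $\Lambda$; the two unmatched poles $\gamma_1,\gamma_2$ are precisely the extra freedom supplied by the deficiency. Since the prefactor decays like $|\gamma_1\gamma_2|/|z|^2$ while the matched product stays bounded, $f(z)=O(|z|^{-2})$ as $z\to\infty$ away from $\Gamma$, so a routine Mittag--Leffler/Liouville argument identifies $f$ with the sum of its principal parts, $f(z)=\sum_n a_nv_n/(z-\gamma_n)$ with $a_nv_n=\mathrm{Res}_{\gamma_n}f$. As the residues at $\gamma_1$ and $\gamma_2$ are plainly nonzero, $a$ is not the zero sequence.

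The heart of the matter is to check that $a$ lies in $\ell^2_v$. Computing $\mathrm{Res}_{\gamma_n}f$ as a ratio of the two products and using $|\gamma_k|\simeq|\lambda_k|$ (a consequence of $\lambda_n\in D_n(v;M)$ together with $v_n=o(V_n)$) along with the sparseness \eqref{expon}, the factors with $m<n$ contribute $\prod_{m<n}|\gamma_m/\lambda_m|\simeq\varrho_n^{\half}$, the factor with $m=n$ contributes $|\gamma_n-\lambda_n|/|\gamma_n|$, the tail $m>n$ is harmless, and the two extra poles yield an overall $|\gamma_n|^{-2}$, so that
\[ |a_n v_n|\simeq \frac{|\gamma_n-\lambda_n|}{|\gamma_n|^2}\,\varrho_n^{\half}, \qquad |a_n|^2 v_n\simeq \frac{|\gamma_n-\lambda_n|^2}{v_n\,|\gamma_n|^4}\,\varrho_n, \]
with $\varrho_n=\prod_{m=3}^{n}|\gamma_m|^2/|\lambda_m|^2$ exactly the quantity of Lemma~\ref{varrhoest}. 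Since $\lambda_n\in D_n(v;M)$ for all but finitely many $n$, the defining inequality of $D_n(v;M)$ gives $|\gamma_n-\lambda_n|^2/v_n\lesssim |\gamma_n|^2/V_n$, whence
\[ |a_n|^2 v_n\lesssim \frac{\varrho_n}{V_n\,|\gamma_n|^2}\lesssim \frac{\varrho_n}{|\gamma_n|^2}. \]
By Lemma~\ref{varrhoest} the hypothesis $v_n=o(V_n)$ yields $\varrho_n=e^{o(n)}$, while \eqref{expon} gives $|\gamma_n|^2\gtrsim\rho^{2n}$ for some $\rho>1$; hence $\varrho_n/|\gamma_n|^2$ decays geometrically and $\sum_n|a_n|^2v_n<\infty$.

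This delivers the desired nonzero $a\in\ell^2_v$ with $H_{(\Gamma,v)}a$ vanishing on $\Lambda$, so $\Lambda$ is not a uniqueness sequence for $H_{(\Gamma,v)}$. I expect the residue identity of the second paragraph to be the main obstacle: one must control the ratio of the two infinite products uniformly in $n$ and keep careful track of the role of the two unmatched poles $\gamma_1,\gamma_2$ in producing the decisive $|\gamma_n|^{-2}$. Once that identity is in hand, summability is immediate from $\varrho_n=e^{o(n)}$ and the exponential growth \eqref{expon}. A secondary technical point is the Mittag--Leffler step identifying $f$ with $\sum_n a_nv_n/(z-\gamma_n)$, which is exactly where the decay $f=O(|z|^{-2})$ --- guaranteed by the deficiency being precisely $2$ --- is needed; with deficiency $1$ or $0$ this decay would fail and the construction would collapse, consistent with Lemmas~\ref{unique1} and \ref{unique3} pointing in opposite directions.
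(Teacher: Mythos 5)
Your construction is in essence identical to the paper's proof of this lemma: the same meromorphic function with the two unmatched poles at $\gamma_1,\gamma_2$, the same residue asymptotics $|a_nv_n|\simeq |\gamma_n-\lambda_n|\,\varrho_n^{\half}/|\gamma_n|^2$, the same use of the defining inequality of $D_n(v;M)$ to obtain $\sum_n|a_n|^2v_n\lesssim\sum_n\varrho_n/(V_n|\gamma_n|^2)<\infty$, and the same Liouville-type identification of $f$ with the sum of its principal parts.

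There is, however, one claim in your Mittag--Leffler step that is not true in general: the matched product does \emph{not} stay bounded, so the asserted decay $f(z)=O(|z|^{-2})$ is not justified. For $z$ in $\Omega_n$ away from the poles, the modulus of the matched product is comparable to $\prod_{3\le m\le n}|\gamma_m|/|\lambda_m|=\varrho_n^{\half}$, and under the sole hypothesis $v_n=o(V_n)$ this can tend to infinity: taking $v_m\equiv 1$ (so $V_m\simeq m$), the points $\lambda_m$ may satisfy $|\gamma_m|/|\lambda_m|-1\simeq m^{-\half}$ while still lying in $D_m(v;M)$, whence $\log\varrho_n\simeq\sqrt{n}\to\infty$. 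The correct bound is $|f(z)|\lesssim\varrho_n^{\half}/|z|^2$ on $\Omega_n\setminus D_n(v;M)$, and the repair is immediate from the very fact you invoke two paragraphs later: Lemma~\ref{varrhoest} gives $\varrho_n=e^{o(n)}$, while \eqref{expon} gives $|z|\gtrsim\rho^n$ there, with $\rho=\inf_n|\gamma_{n+1}|/|\gamma_n|>1$; hence $f$ still tends to $0$ on a sequence of expanding circles, which is all the contour-integral/Liouville argument needs. Note that the paper carries this factor explicitly---its bound on the entire remainder is $|h(z)|^2\lesssim\varrho_n/|z|^4+V_n/|z|^2$. For the same reason, your closing heuristic is slightly off: for deficiency $1$ the decay does \emph{not} fail (the analogous $f$ still decays like $\varrho_n^{\half}/|z|$); what breaks down there is membership of the residue sequence in $\ell^2_v$, the relevant sum becoming $\sum_n\varrho_n/V_n$, which need not converge. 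So the dichotomy between deficiencies $1$ and $2$ is governed by the summability step, not by the decay step.
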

\begin{proof}
We may write
\[
\frac{c}{(z-\gamma_1)(z-\gamma_2)} \prod_{m=3}^\infty
\frac{1-z/\lambda_m}{1-z/\gamma_n}=\sum_{n=1}^\infty \frac{a_n
v_n}{z-\gamma_n}+ h(z),
\]
where $h$ is an entire function and \[ |a_n|^2 v_n^2 \simeq
\frac{|\gamma_n-\lambda_n|^2}{|\gamma_n|^4} \varrho_n. \] Since
$\Lambda$ is a $v$-perturbation, we therefore get
\[ \sum_{n=1}^\infty |a_n|^2 v_n \lesssim \sum_{n=1}^\infty
\frac{\varrho_{n}}{|\gamma_n|^2 V_{n}} <\infty, \] where in the
final step we used that the ratio $\varrho_n/V_n$ grows at most
sub-exponentially. We then get
\[ |h(z)|^2 \lesssim \frac{\varrho_n}{|z|^4} + \frac{V_n}{|z|^2}
\]
when $z$ is in $D_n(v; M)$ with $M$ sufficiently large. Using again
that both $\varrho_n$ and $V_n$ grow at most sub-exponentially, we
have that $h(z)\to 0$ when $z\to \infty$, which means that $h\equiv
0$.
\end{proof}

\begin{lemma}\label{unique4}
Suppose that $v_n/|\gamma_n|^2=o(P_n)$ when $n\to\infty$. If, in
addition, $\Lambda$ is a $v$-perturbation of $\Gamma$ of deficiency
$1$, then $\Lambda$ is not a uniqueness sequence for
$H_{(\Gamma,v)}$.
\end{lemma}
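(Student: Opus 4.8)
The plan is to imitate the proof of Lemma~\ref{unique3}, but now spending only a \emph{single} extra pole to absorb the deficiency $1$ and using the hypothesis $v_n/|\gamma_n|^2=o(P_n)$ in place of $v_n=o(V_n)$. Since $\Lambda$ is a $v$-perturbation of $\Gamma$ of deficiency $1$, we have $n_0=2$ and $\lambda_n\in D_n(v;M)$ for all but finitely many $n$; in particular $|\lambda_n-\gamma_n|=o(|\gamma_n|)$, so that $|\gamma_n|\simeq|\lambda_n|$ and Lemma~\ref{varrhoest} applies. I would exhibit a witness for non-uniqueness by decomposing the meromorphic function
\[
\frac{c}{z-\gamma_1}\prod_{m=2}^\infty\frac{1-z/\lambda_m}{1-z/\gamma_m}=\sum_{n=1}^\infty\frac{a_n v_n}{z-\gamma_n}+h(z),
\]
where $h$ is entire. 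The factor $1/(z-\gamma_1)$ supplies the pole at $\gamma_1$ missing from the product and forces the left-hand side to decay like $1/z$ at infinity; the product converges because $|\lambda_m-\gamma_m|/|\gamma_m|^2\lesssim |\gamma_m|^{-1}\bigl(v_m/(|\gamma_m|^2P_m)\bigr)^{1/2}=o(|\gamma_m|^{-1})$ is summable. The goal is then to show that $(a_n)\in\ell^2_v$ is nonzero and that $h\equiv 0$, for then $H_{(\Gamma,v)}a$ agrees with the left-hand side and hence vanishes on $\Lambda$.

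The residue computation at $\gamma_n$ is identical to the one in Lemma~\ref{unique3} except that only one factor $(\gamma_n-\gamma_1)^{-1}\simeq|\gamma_n|^{-1}$ now occurs in place of two, so the estimate recorded there becomes
\[
|a_n|^2 v_n^2\simeq \frac{|\gamma_n-\lambda_n|^2}{|\gamma_n|^2}\,\varrho_n.
\]
Dividing by $v_n$ and using that the $v$-perturbation condition in the present regime reads $|\gamma_n-\lambda_n|^2/v_n\lesssim 1/P_n$ (this is precisely the inequality $Mv_n/|\lambda_n-\gamma_n|^2\ge P_n$ built into the definition of $D_n(v;M)$), I obtain $\sum_n |a_n|^2 v_n\lesssim \sum_n \varrho_n/(|\gamma_n|^2P_n)$. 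Here $|\gamma_n|^2$ grows exponentially, whereas both $\varrho_n=e^{o(n)}$ (Lemma~\ref{varrhoest}) and $1/P_n=e^{o(n)}$ grow only sub-exponentially; the latter holds because $v_n/|\gamma_n|^2=o(P_n)$ gives $P_{n-1}/P_n\to 1$ and hence $-\log P_n=o(n)$. Thus the series converges and $(a_n)\in\ell^2_v$. This is exactly the place where deficiency $1$ already suffices in the $P$-regime, in contrast to the $V$-regime of Lemma~\ref{unique3}: there the weaker radius bound $|\gamma_n-\lambda_n|^2/v_n\lesssim|\gamma_n|^2/V_n$ forces one to spend \emph{two} poles in order to recover the decisive factor $|\gamma_n|^{-2}$.

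The main obstacle will be showing that $h\equiv 0$. I would estimate $h=f-\sum_n a_nv_n/(z-\gamma_n)$ on the circles $|z|=\rho_n$ with $\rho_n=\bigl(|\gamma_n|+|\gamma_{n+1}|\bigr)/2$, on which every $|z-\gamma_m|$ is $\gtrsim|z|$ for $m\le n$ and $\gtrsim|\gamma_m|$ for $m>n$ by the sparseness of $\Gamma$. On such a circle $f(z)=O\bigl(\varrho_n^{1/2}/|z|\bigr)$, the segment $\sum_{m\le n}$ contributes $O\bigl((V_n/|z|^2)^{1/2}\bigr)$ by Cauchy--Schwarz, and the tail $\sum_{m>n}$ contributes $O\bigl(P_n^{1/2}\bigr)$, so that
\[
|h(z)|^2\lesssim \frac{\varrho_n+V_n}{|z|^2}+P_n,\qquad |z|=\rho_n.
\]
Since $|z|\simeq|\gamma_n|$ grows exponentially while $\varrho_n$ and $V_n$ grow only sub-exponentially (that $V_n/|\gamma_n|^2\to 0$ follows from admissibility together with the sparseness of $\Gamma$) and $P_n\to 0$, the right-hand side tends to $0$ as $n\to\infty$. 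The maximum principle applied on the disks $|z|\le\rho_n$ then forces $h\equiv 0$. Finally $c\neq 0$ gives $f\not\equiv 0$, hence $(a_n)\neq 0$; as $H_{(\Gamma,v)}a=f$ vanishes on $\Lambda$, the sequence $\Lambda$ fails to be a uniqueness sequence for $H_{(\Gamma,v)}$.
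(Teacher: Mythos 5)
Your proof is correct and takes essentially the same route as the paper's: the same decomposition $\frac{c}{z-\gamma_1}\prod_{m\ge 2}\frac{1-z/\lambda_m}{1-z/\gamma_m}=\sum_n \frac{a_nv_n}{z-\gamma_n}+h(z)$, the same residue estimate $|a_n|^2v_n^2\simeq |\gamma_n-\lambda_n|^2\varrho_n/|\gamma_n|^2$, and the same use of the sub-exponential growth of $\varrho_n/P_n$ against the exponential growth of $|\gamma_n|^2$ to put $(a_n)$ in $\ell^2_v$. Your only deviation is in the final Liouville step, where you bound $h$ on the circles $|z|=\rho_n$ and invoke the maximum principle---a slightly more carefully executed version of the paper's estimate $|h(z)|^2\lesssim \varrho_n/|z|^2+P_n$ on the annuli---together with supporting details (convergence of the infinite product, $1/P_n=e^{o(n)}$, $V_n/|\gamma_n|^2\to 0$) that the paper leaves implicit.
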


\begin{proof}
In this case, we may write
\[
\frac{c}{z-\gamma_1} \prod_{m=2}^\infty
\frac{1-z/\lambda_m}{1-z/\gamma_m}=\sum_{n=1}^\infty \frac{a_n
v_n}{z-\gamma_n}+ h(z),
\]
where $h$ is an entire function and \[ |a_n|^2 v_n^2 \simeq
\frac{|\gamma_n-\lambda_n|^2}{|\gamma_n|^2} \varrho_n. \] Since
$\Lambda$ is a $v$-perturbation, we get
\[ \sum_{n=1}^\infty |a_n|^2 v_n \lesssim \sum_{n=1}^\infty
\frac{\varrho_{n}}{|\gamma_n|^2 P_{n}} <\infty, \] where we now used
that the ratio $\varrho_n/P_n$ grows at most sub-exponentially. It
follows that
\[ |h(z)|^2 \lesssim \frac{\varrho_n}{|z|^2} + P_n
\]
when $z$ is in $D_n(v; M)$ with $M$ sufficiently large. We conclude
that $h(z)\to 0$ when $z\to \infty$, which means that $h\equiv 0$.
\end{proof}
\subsection{Geometric criteria for invertibility of $H_{(\Gamma,v);(\Lambda,w)}$ when
$\Gamma$ is a sparse sequence}\label{geometry}

After the preliminary results of the previous subsection, we may now
state our geometric conditions for invertibility. We begin with the
case when $v_n=o(V_n)$ as $n\to\infty$.

\begin{theorem}
Suppose $w$ is the Bessel weight sequence for $\Lambda$ with respect
to $(\Gamma,v)$ and that $V_n\to \infty$ and $v_n=o(V_n)$ when
$n\to\infty$. Then the transformation $H_{(\Gamma,v);(\Lambda,w)}$
is invertible if and only if $\sup_n V_n Q_n <\infty$ and one of the
following conditions holds:
\begin{itemize}
\item[(0)] $\Lambda$ is an exact $v$-transformation of $\Gamma$
and there are positive constants $C$ and $\delta$ such that
\begin{equation} \label{boundrho} \frac{\varrho_m}{\varrho_n}\le C
\left(\frac{V_m}{V_n}\right)^{1-\delta}\end{equation} whenever
$m>n$.
\item [(1)] $\Lambda$ is a $v$-transformation of $\Gamma$ of defect $1$
and there are positive constants $C$ and $\delta$ such that
\begin{equation} \label{boundrho2} \frac{\varrho_m}{\varrho_n}\ge C
\left(\frac{V_m}{V_n}\right)^{1+\delta}
\end{equation} whenever
$m>n$. \end{itemize} \label{y_int_th}
\end{theorem}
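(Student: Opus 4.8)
The plan is to derive the criterion by feeding Theorem~\ref{th1} into Theorem~\ref{th13}, with the localization results of Subsection~\ref{localsub} supplying the geometric backbone. Since the spaces under consideration admit generating functions, Theorem~\ref{th13} reduces invertibility of $H_{(\Gamma,v);(\Lambda,w)}$ to the conjunction of two statements: that $\Lambda$ be an exact uniqueness sequence for $H_{(\Gamma,v)}$, and that the companion transform $H_{(\Lambda,\varpi);(\Gamma,\nu)}$ be bounded. Lemma~\ref{localize} shows that, under the standing hypothesis $v_n=o(V_n)$, invertibility forces $\Lambda$ to be either an exact $v$-perturbation of $\Gamma$ (this will be case~(0)) or a $v$-perturbation of deficiency~$1$ (case~(1)); in either case $|\gamma_n|\simeq|\lambda_n|$, so $\Lambda$ itself satisfies the sparseness condition \eqref{expon} and Theorem~\ref{th1} is applicable to the companion transform. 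The whole argument therefore decomposes into a weight computation, an application of Theorem~\ref{th1}, and a verification of exactness.

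First I would compute $\nu$ and $\varpi$. Writing the generating function in product form $G(z)=K\prod_m(1-z/\lambda_m)/(1-z/\gamma_m)$ and reading off its residues at the $\gamma_n$ and its values at the $\lambda_n$ via \eqref{PsiPhi}, and then using $w_n\simeq|\lambda_n-\gamma_n|^2/v_n$ together with $|\gamma_n|\simeq|\lambda_n|$, I expect the clean estimates $\nu_n\simeq w_n\varrho_n$ and $\varpi_n\simeq v_n/\varrho_n$, with $\varrho$ as in Lemma~\ref{varrhoest}. A welcome consequence is that the local condition \eqref{trivial} for the companion transform, namely $\sup_n\varpi_n\nu_n/|\gamma_n-\lambda_n|^2$, collapses to $\sup_n 1<\infty$; thus all the information sits in the global condition \eqref{carl_2}.

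Next I would insert these weights into \eqref{carl_2} for $H_{(\Lambda,\varpi);(\Gamma,\nu)}$, whose underlying atomic measure is $\sum_n\nu_n\delta_{\gamma_n}$, and combine it with the boundedness condition for $H_{(\Gamma,v);(\Lambda,w)}$ itself, which for a perturbation reduces by Theorem~\ref{thm3} to $\sup_n(V_nQ_n+W_nP_n)<\infty$. Writing $\tilde V_n=\sum_{l<n}\varpi_l$ and $\tilde Q_n=\sum_{l>n}\varpi_l/|\lambda_l|^2$, the companion condition is the boundedness of $\tilde V_n\sum_{m>n}\nu_m/|\gamma_m|^2$ and of $(\sum_{m\le n}\nu_m)\,\tilde Q_n$. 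The core of the proof is to show, using the perturbation bound $|\gamma_n-\lambda_n|^2/|\gamma_n|^2\lesssim v_n/V_n$, the estimates of Lemma~\ref{varrhoest}, and the comparison of sums with integrals of $x^{-1/2}$ as in the proof of Theorem~\ref{th1}, that the conjunction of these requirements is equivalent to $\sup_n V_nQ_n<\infty$ together with the one-sided power bound \eqref{boundrho} in case~(0), respectively \eqref{boundrho2} in case~(1). The bound on $\varrho_m/\varrho_n$ controls the off-diagonal decay of the double sums, and the exponent $1$ is the exact threshold at which the governing geometric series passes from convergence to divergence; the remaining lacunary and $W_nP_n$ requirements turn out to be automatic once $\sup_n V_nQ_n<\infty$ and the growth bound hold.

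It remains to settle exactness, and this is where the two cases diverge. In case~(0), $\Lambda$ is a uniqueness sequence by Lemma~\ref{unique1}; removing a single point yields a deficiency-one configuration whose natural kernel candidate, built from $\prod(1-z/\lambda_m)/(1-z/\gamma_m)$ with one factor deleted, lies in $\ell^2_v$ exactly when $\sum_n\nu_n/|\gamma_n|^2\simeq\sum_n|\gamma_n-\lambda_n|^2\varrho_n/(v_n|\gamma_n|^2)$ converges, which is equivalent to \eqref{boundrho}; thus \eqref{boundrho} is precisely what makes $\Lambda$ an exact uniqueness sequence. In case~(1) the same membership computation shows that the deficiency-one sequence $\Lambda$ is itself a uniqueness sequence exactly under \eqref{boundrho2}, while removing any point produces a deficiency-two sequence that fails uniqueness by Lemma~\ref{unique3}; so $\Lambda$ is again exact. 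I expect the main obstacle to be the dovetailing in the previous paragraph: one must check that a single quantitative growth hypothesis, with uniform constants $C$ and $\delta>0$, simultaneously yields the uniform bound on the Muckenhoupt-type double sum and the convergence of the membership series, and that the two alignment cases are genuinely controlled by the opposite inequalities --- this is the point at which the asymmetry between deficiency and excess, and the borderline role of the exponent $1$, must be tracked with care.
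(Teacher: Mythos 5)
Your overall architecture coincides with the paper's: Theorem~\ref{th13} reduces invertibility to exactness plus boundedness of the companion transform $H_{(\Lambda,\varpi);(\Gamma,\nu)}$; Lemma~\ref{localize} pins down the two alignment cases; the weight computation via the product form of the generating function gives $\nu_n\simeq w_n\varrho_n$ and $\varpi_n\simeq v_n\varrho_n^{-1}$; the local condition \eqref{trivial} for the companion transform indeed collapses to a constant since $\varpi_n\nu_n/|\gamma_n-\lambda_n|^2\simeq 1$; and exactness is settled via the membership series together with Lemma~\ref{unique1} and Lemma~\ref{unique3}, exactly as in the paper. One small inaccuracy: your single generic formula for the weights is only correct in case (0). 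In the deficiency-one case the generating function carries the factor $((z-\gamma_1)(z-\gamma_2))^{-1}$, so $\nu_n\simeq w_n\varrho_n|\gamma_n|^{-2}$ and $\varpi_n\simeq v_n\varrho_n^{-1}|\gamma_n|^{2}$, and consequently the operative Muckenhoupt-type quantity in case (1) is $W_n^{(\varrho,1)}P_n^{(\varrho,1)}$ rather than the analogue of $V_nQ_n$; this asymmetry is what produces the reversed inequality \eqref{boundrho2} via part (ii) rather than part (i) of Lemma~\ref{simplefacts}.

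There are two genuine soft spots. First, the pivotal necessity step --- that $\sup_n V_n^{(\varrho,0)}Q_n^{(\varrho,0)}<\infty$ forces the \emph{uniform two-index} power bound \eqref{boundrho} --- is asserted with a ``geometric series threshold'' heuristic, but it is not a threshold computation. The paper's argument blocks the indices by doubling of $V_n$ (the sequence $(n_j)$), runs a dichotomy on whether $Q_{n_j}-Q_{n_{j+1}}\ge\varepsilon/V_{n_{j+1}}$, uses \eqref{lembasic1} of Lemma~\ref{varrhoest} to show $\varrho$ is nearly constant when the $Q$-increment is small (so that $V_{n_j}\varrho_{n_j}^{-1}$ grows geometrically on such stretches), and then invokes the self-improvement Lemma~\ref{simplefacts} to upgrade the resulting cumulative inequality $\sum_{l\le j}V_{n_l}\varrho_{n_l}^{-1}\lesssim V_{n_{j+1}}\varrho_{n_{j+1}}^{-1}$ to \eqref{boundrho}; without some such device your plan stalls at precisely the point you flag as the ``main obstacle.'' Second, your claim that convergence of the membership series $\sum_n\nu_n/|\gamma_n|^2$ is \emph{equivalent} to \eqref{boundrho} is false: nothing prevents, say, $\varrho_{n_j}/V_{n_j}\simeq j^{-2}$ along the doubling blocks, which makes the series converge while the bound $\varrho_m/\varrho_n\le C(V_m/V_n)^{1-\delta}$ fails between distant blocks, since its right-hand side decays geometrically in the block distance. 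Only the implication \eqref{boundrho} $\Rightarrow$ convergence holds, and that is the direction both you and the paper actually use in the sufficiency argument; but it means exactness cannot be a source of the necessity of \eqref{boundrho}, and the same caveat applies to your ``exactly under \eqref{boundrho2}'' formulation in case (1). The necessity of the power bounds must come, as in the paper, from the boundedness of the companion transform alone.
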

It is quite remarkable that the essential quantitative conditions
for invertibility, found in (0) and (1),  only depend on the moduli
of the complex numbers $\gamma_n/\lambda_n$.

We note that in the case when
\[ \sum_{n=1}^\infty v_n <\infty, \]
the result is much simpler and less delicate. Then, as can be seen
from the proof of part (0) of Theorem~\ref{y_int_th}, the
transformation $H_{(\Gamma,v);(\Lambda;w)}$ is invertible if and
only if $\Lambda$ is an exact $v$-transformation of $\Gamma$ and
$\sup_n Q_n < \infty$. This result can be viewed as a special case
of part (0) of the theorem.

To arrive at the results stated in Example 1 (see the introduction),
we note that if
\[ \frac{|\gamma_n-\lambda_n|}{|\lambda_n|} \lesssim \frac{v_n}{V_n}, \]
then
\[ Q_n=\sum_{m=n+1}^\infty \frac{w_m}{|\lambda_m|^2}\lesssim
\sum_{m=n+1}^\infty \frac{v_m}{V_{m+1}^2}\le \frac{1}{V_{n+1}}, \]
where in the last step we compared the sum with the integral of
$1/x^2$ from $V_{n+1}$ to $\infty$. We also have, assuming
$|\gamma_n|/|\lambda_n|-1\le c v_n/V_n$, that
\begin{equation}\label{integraltest} \log \frac{\varrho_m}{\varrho_n}\le 2c\,
(1+o(1))\sum_{j=n+1}^m \frac{v_l}{V_{l}}=2c(1+o(1)) \log
\frac{V_m}{V_n} \end{equation} when $m>n$ and $n\to \infty$. In view
of Theorem~\ref{y_int_th}, this gives part (0) of the example; part
(1) follows by the same argument, with the inequality in
\eqref{integraltest} reversed.

In the case when $v_n/|\gamma_n|^2=o(P_n)$, we have the following
counterpart to Theorem~\ref{y_int_th}.
\begin{theorem}
Suppose $w$ is the Bessel weight sequence for $\Lambda$ with respect
to $(\Gamma,v)$ and that $v_n/|\gamma_n|^2=o(P_n)$ when
$n\to\infty$. Then the transformation $H_{(\Gamma,v);(\Lambda,w)}$
is invertible if and only if we have $\sup_n W_n P_n <\infty$ and
one of the following two conditions holds:
\begin{itemize}
\item [(0)] $\Lambda$ is an exact $v$-transformation of $\Gamma$
and there are positive constants $C$ and $\delta$ such that
\begin{equation} \label{case0} \frac{\varrho_m}{\varrho_n}\ge C
\left(\frac{P_m}{P_n}\right)^{1-\delta} \end{equation} whenever
$m>n$.
\item[(1)]
$\Lambda$ is a $v$-transformation of $\Gamma$ of excess $1$ and
there are positive constants $C$ and $\delta$ such that
\begin{equation}\label{case1} \frac{\varrho_m}{\varrho_n}\le C
\left(\frac{P_m}{P_n}\right)^{1+\delta} \end{equation} whenever
$m>n$.
\end{itemize}
\label{y_int_th2}
\end{theorem}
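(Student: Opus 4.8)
The plan is to mirror the program used for Theorem~\ref{y_int_th}, exploiting the duality between the increasing sequence $V_n$ and the decreasing sequence $P_n$; under this duality the roles of deficiency and excess are interchanged and the inequalities between the ratios $\varrho_m/\varrho_n$ and the weight ratios are reversed. First I would invoke Theorem~\ref{th13}; it applies because the sparseness of $\Gamma$ forces $\gamma_n\to\infty$, so that the meromorphic function \eqref{Phi} is available as a generating function for every exact uniqueness sequence. Thus invertibility of $H_{(\Gamma,v);(\Lambda,w)}$ is equivalent to $\Lambda$ being an exact uniqueness sequence together with boundedness of the companion transform $H_{(\Lambda,\varpi);(\Gamma,\nu)}$, and invertibility forces $H_{(\Gamma,v);(\Lambda,w)}$ itself to be bounded. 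Feeding the latter into Theorem~\ref{thm3} and using that a $v$-perturbation meets each annulus $\Omega_n$ in a single point with $w_n\simeq|\lambda_n-\gamma_n|^2/v_n$ (so that $\Lambda$ is essentially $\Lambda^{(0)}$), condition \eqref{carl_3} collapses in the present regime to $\sup_n W_nP_n<\infty$, the complementary term being controlled by the hypothesis $v_n/|\gamma_n|^2=o(P_n)$. Finally Lemma~\ref{localize2} localizes $\Lambda$ to be either an exact $v$-perturbation or a $v$-perturbation of excess $1$, which is precisely the split between cases (0) and (1).

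The next step is to decide which of these two alignments is the exact uniqueness sequence, and this is where the power laws \eqref{case0} and \eqref{case1} first enter. Lemma~\ref{unique2} already shows that a $v$-perturbation of excess $1$ is always a uniqueness sequence and Lemma~\ref{unique4} that one of deficiency $1$ never is; the borderline object in the present regime is the exact perturbation, whose status I would settle by an analogue of Lemmas~\ref{unique1}--\ref{unique4}. Assuming $H_{(\Gamma,v)}a$ vanishes on an exact $\Lambda$, one writes $\sum_n a_nv_n/(z-\gamma_n)=J(z)\prod_m(1-z/\lambda_m)/(1-z/\gamma_m)$ and shows, using Lemma~\ref{varrhoest} and the growth of $\varrho_n$, that $J\equiv0$ exactly when $\varrho_n$ is large in the sense of \eqref{case0}, whereas the product itself lies in $\Hg(\Gamma,v)$ and furnishes a nonzero annihilating vector when $\varrho_n$ is small in the sense of \eqref{case1}. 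Thus in case (0) the exact $\Lambda$ is a uniqueness sequence and its reduction by $\lambda_1$ is a deficiency-$1$ perturbation, hence non-unique by Lemma~\ref{unique4}, so $\Lambda$ is an exact uniqueness sequence; in case (1) the exact perturbation fails uniqueness, so one passes to excess $1$, whose reduction by the surplus point returns the non-unique exact perturbation. This reproduces the dichotomy of (0) and (1).

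It then remains to match boundedness of $H_{(\Lambda,\varpi);(\Gamma,\nu)}$ to the quantitative content of \eqref{case0} and \eqref{case1}. Writing $\Phi$ for the generating function \eqref{Phi} and using the residue/product representation together with $|\gamma_n|\simeq|\lambda_n|$, I would derive from \eqref{PsiPhi} the weight asymptotics $\nu_n\simeq w_n\varrho_n$ and $\varpi_j\simeq v_j/\varrho_j$; the crux is that the correction product $\prod_{m\neq n}(1-\gamma_n/\lambda_m)/(1-\gamma_n/\gamma_m)$ has modulus $\simeq\sqrt{\varrho_{n-1}}$, since the factors with $m>n$ are $\simeq1$ after cancellation against $\gamma_m/\lambda_m$ while those with $m<n$ contribute $|\gamma_m/\lambda_m|$. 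Applying Theorem~\ref{th1} to $H_{(\Lambda,\varpi);(\Gamma,\nu)}$, with $\Lambda$ now the sparse source and $\nu$ the atomic target on $\Gamma$, condition \eqref{carl_2} becomes
\[
\sup_n\Bigl(\sum_{l\le n}\frac{v_l}{\varrho_l}\sum_{m>n}\frac{w_m\varrho_m}{|\gamma_m|^2}+\sum_{m\le n}w_m\varrho_m\sum_{l>n}\frac{v_l}{\varrho_l|\lambda_l|^2}\Bigr)<\infty .
\]
The last step is the summation-by-integration estimate dual to the one in the proof of Theorem~\ref{th1}: comparing the $\varrho$-weighted tails with integrals of $x^{-\half}$ and using $\sup_n W_nP_n<\infty$, this supremum is finite precisely when $\varrho_m/\varrho_n$ is comparable to $(P_m/P_n)^{1-\delta}$ in the exact case and to $(P_m/P_n)^{1+\delta}$ in the excess-$1$ case.

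I expect the principal obstacle to be this last translation together with the weight asymptotics. The residue estimate for $\nu_n$ and $\varpi_j$ must control the entire factor $J$, equivalently confirm that the infinite product reproduces $\varrho_{n-1}$ up to a bounded multiplicative error, uniformly in $n$, which relies on the $v$-perturbation hypothesis and the bound $\log\varrho_n=o(n)$ from Lemma~\ref{varrhoest}. Moreover, matching the two endpoint exponents $1\mp\delta$ to the exact and excess-$1$ cases, and verifying that the borderline relation $\varrho_m/\varrho_n\simeq P_m/P_n$ destroys invertibility, requires tracking the direction of each inequality carefully; this is the $P$-analogue of the computation behind Example~1 and is most safely carried out by the same integral comparison used in \eqref{integraltest}.
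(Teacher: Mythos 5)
Your proposal is correct and follows essentially the same route as the paper, which in fact omits the proof of Theorem~\ref{y_int_th2} precisely because it is ``completely analogous'' to that of Theorem~\ref{y_int_th}: you invoke Theorem~\ref{th13} together with Theorem~\ref{th1}, localize $\Lambda$ via Lemma~\ref{localize2}, correctly mirror the roles of Lemmas~\ref{unique1}--\ref{unique4} (so that in the $P$-regime the exact perturbation becomes the borderline case settled by the quantitative condition \eqref{case0}, while excess $1$ is unconditionally a uniqueness sequence by Lemma~\ref{unique2} and deficiency $1$ is excluded by Lemma~\ref{unique4}), and you derive the weight asymptotics $\nu_n\simeq w_n\varrho_n$, $\varpi_n\simeq v_n\varrho_n^{-1}$ feeding into the $\varrho$-weighted form of \eqref{carl_2}, exactly as the paper does in the $V$-regime. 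The only caveat, which does not affect the architecture, is that in the excess-$1$ case the generating function carries one extra zero, so the asymptotics for $\nu$ and $\varpi$ acquire additional factors $|\gamma_n|^{\pm 2}$ (mirroring \eqref{nuest2} and \eqref{varpiest2} in case (1) of the $V$-regime proof); these are absorbed into the $\varrho$-weighted sums just as in the paper, and the block decomposition along indices $n_j$ with $P_{n_j}/P_{n_{j+1}}\simeq 2$ together with Lemma~\ref{simplefacts} then upgrades the boundedness conditions to the one-sided power bounds \eqref{case0} and \eqref{case1}, as you indicate.
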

There is a slight lack of symmetry between the two theorems; while
it may happen that $\sup_n V_n <\infty$, we will always have that
$P_n\to 0$. Therefore, no precaution is needed concerning the decay
of $P_n$.

We have the following statement, in complete analogy with Example 1
and with the same proof:

\begin{example2}Suppose that $v_n/|\gamma_n|^2=o(P_n)$ when $n\to\infty$ and
that $\sup_n W_n P_n <\infty$.
\begin{itemize}
\item[(0)] If, in addition, $\Lambda$ is an exact $v$-perturbation of $\Gamma$
and there is a real constant $c<1/2$ such that
\[%\begin{equation}%\label{example}
\frac{|\lambda_n|}{|\gamma_n|} - 1 \le c \, \frac{v_n}{|\gamma_n|^2
P_n}
\]%\end{equation}
for all sufficiently large $n$, then $H_{(\Gamma,v);(\Lambda,w)}$ is
an invertible transformation.
\item[(1)] If, on the other hand, $\Lambda$ is a $v$-perturbation of $\Gamma$ of excess
$1$ and there is a positive constant $c >1/2$ such that
\[%\begin{equation}\label{exampledef}
\frac{|\lambda_n|}{|\gamma_n|} - 1 \ge c \,
\frac{v_n}{|\gamma_n|^2P_n}
\]%\end{equation}
for all sufficiently large $n$, then $H_{(\Gamma,v);(\Lambda,w)}$ is
an invertible transformation.
\end{itemize}
\end{example2}

The two final subsections of this paper will present the proof of
Theorem~\ref{y_int_th}; the proof of Theorem~\ref{y_int_th2} is
completely analogous and will therefore be omitted.

\subsection{Proof of the necessity of the conditions in Theorem~\ref{y_int_th}}

In addition to the results of Subsection~\ref{localsub}, we will
need the following simple facts.

\begin{lemma}\label{simplefacts}
Let $c=(c_n)$ be a sequence of positive numbers.
\begin{itemize}
\item [(i)] If there is a constant $C$ such that $\sum_{m=1}^{n-1} c_m \le C c_n$
for $n>1$, then there is a positive constant $\delta$ such that $
c_m/c_n \ge C 2^{\delta (m-n)}$ whenever $m>n$.
\item [(ii)] If there is a constant $C$ such that $\sum_{m=n+1}^{\infty} c_m
\leq C c_n$ for every positive integer $n$, then there is a positive
constant $\delta$ such that $c_m/c_n \le C 2^{-\delta (m-n)}$
whenever $m>n$.\end{itemize}
\end{lemma}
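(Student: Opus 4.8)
The plan is to reduce both assertions to elementary geometric estimates for the partial sums of $(c_n)$, reading each hypothesis as a reverse doubling condition on those sums and then transferring the resulting growth or decay to the individual terms.

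For part (i), I would work with the partial sums $S_n=\sum_{m=1}^{n}c_m$. The hypothesis $\sum_{m=1}^{n-1}c_m\le Cc_n$ can be written as $S_{n-1}\le C(S_n-S_{n-1})$, which rearranges to $S_n\ge\rho\,S_{n-1}$ with $\rho=1+1/C>1$; iterating yields $S_m\ge\rho^{\,m-n}S_n$ for $m>n$. To pass from this growth of the sums to the terms themselves, I would observe that the same hypothesis gives $S_{n-1}\le Cc_n$, hence $S_n=S_{n-1}+c_n\le(C+1)c_n$, i.e.\ $c_n\ge S_n/(C+1)$, while trivially $c_n\le S_n$. Combining the two bounds with the geometric growth of $S_n$ gives $c_m/c_n\gtrsim\rho^{\,m-n}$, and the choice $\delta=\log_2\rho=\log_2(1+1/C)>0$ rewrites $\rho^{\,m-n}$ as $2^{\delta(m-n)}$, which is the claimed lower bound.

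Part (ii) is the mirror image, carried out with the tail sums $T_n=\sum_{m=n}^{\infty}c_m$, which are finite by hypothesis. Here $\sum_{m=n+1}^{\infty}c_m\le Cc_n$ becomes $T_{n+1}\le C(T_n-T_{n+1})$, i.e.\ $T_{n+1}\le\theta\,T_n$ with $\theta=C/(C+1)=1/\rho<1$, so $T_m\le\theta^{\,m-n}T_n$ for $m>n$. The same hypothesis yields $T_n=c_n+T_{n+1}\le(C+1)c_n$, hence $c_n\ge T_n/(C+1)$, while $c_m\le T_m$; therefore $c_m/c_n\lesssim\theta^{\,m-n}=2^{-\delta(m-n)}$ with the same $\delta=\log_2(1+1/C)$.

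I do not expect any genuine obstacle here: the two parts are proved by essentially the same argument, with partial sums replaced by tail sums, and are formally dual through the relation $\theta=1/\rho$. The only points demanding a little care are the off-by-one shifts when relating $c_n$ to $S_n$ or to $T_n$, and the bookkeeping of multiplicative constants. In particular, the constant displayed in the conclusion is best read as an admissible constant in the sense of the paper's $\gtrsim$ and $\lesssim$ notation rather than literally the hypothesis constant $C$, since already at $m=n+1$ the inequality $c_{n+1}/c_n\ge C\,2^{\delta}$ with the literal $C$ would be incompatible with a positive $\delta$ whenever $C\ge1$.
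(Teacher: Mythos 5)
Your proof is correct, and it takes a genuinely different route from the paper's. The paper stays with the terms themselves: summing the hypothesis over the block $m=n,\dots,n+N-1$ yields $Nc_{n-1}\le N\sum_{m=1}^{n-1}c_m\le C\sum_{m=n}^{n+N-1}c_m\le C^2c_{n+N}$, so choosing $N>2C^2$ forces doubling at stride $N+1$, namely $c_{n+j(N+1)}\ge 2^jc_n$, and the exponential bound follows with $\delta=1/(N+2)$, which is of order $1/C^2$; part (ii) is merely declared to be ``similar''. You instead read the hypothesis as strict geometric growth of the partial sums, $S_n\ge(1+1/C)S_{n-1}$, and transfer it back to the terms through the two-sided comparison $S_n/(C+1)\le c_n\le S_n$, a comparability the paper never makes explicit; and you actually write out the dual tail-sum argument for (ii) rather than waving at it. Your version buys a one-line induction and a sharper explicit exponent $\delta=\log_2(1+1/C)$, of order $1/C$ rather than $1/C^2$, at the modest cost of introducing $S_n$ and $T_n$. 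Your closing caveat about the constant is also correct and matches how the lemma is actually used downstream: the hypothesis only forces $c_{n+1}/c_n\ge 1/C$, so the conclusion with the literal prefactor $C$ fails already at $m=n+1$ once $C\ge 1$; the paper's own proof likewise delivers the estimate only up to an adjusted multiplicative constant, exactly like your factor $(C+1)^{-1}$, and the later applications (the bounds on $\varrho_{n_{j+l}}/\varrho_{n_j}$ in the proof of Theorem~\ref{y_int_th}) invoke it only in that form.
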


\begin{proof}
We consider (i). The assumption implies that
\[ Nc_{n-1}\le N \sum_{m=1}^{n-1} c_m \le C \sum_{m=n}^{n+N-1} c_m \le C^2 c_{n+N}. \]
which means that if we choose $N>2C^2$, then $c_{n+j(N+1)}\ge 2^{j}
c_n$. The result follows if we choose $\delta=1/(N+2)$. The proof of
(ii) can be performed in a similar way.
\end{proof}

We turn to the proof of the necessity of the conditions in
Theorem~\ref{y_int_th}. Thus we begin by assuming that
$H_{(\Gamma,v);(\Lambda,w)}$ is an invertible transformation. Since
this means that, in particular, $H_{(\Gamma,v);(\Lambda,w)}$ is a
bounded transformation, we must have $\sup_n V_n Q_n <\infty$. Also,
in view of Lemma~\ref{localize}, we already know that $\Lambda$ is
either an exact $v$-perturbation of $\Gamma$ or a $v$-perturbation
of $\Gamma$ of deficiency $1$. Thus it remains only to establish the
necessity of the conditions in parts (0) and (1), under the
respective assumptions of exactness and deficiency $1$.

We treat the two cases separately:

(0) \emph{$\Lambda$ is an exact $v$-perturbation of $\Gamma$.} Since
$v_n=o(V_n)$, the weight sequence $w=(w_n)$ defined by
\eqref{admissurj} satisfies
\begin{equation}\label{simplew} w_n\simeq \frac{|\gamma_n-\lambda_n|^2}{v_n}.
\end{equation} As a consequence, we now obtain simple estimates for
the weight sequences $\nu=(\nu_n)$ and $\varpi=(\varpi_j)$ appearing
in Theorem~\ref{th13}.

We begin by noting that if $\Lambda$ is a $v$-perturbation of
$\Gamma$ and an exact uniqueness sequence for $H_{(\Gamma,v)}$, then
there is a constant $c$ such that
\begin{equation}\label{partfrac} \sum_{n=1}^\infty \frac{e_n v_n}{z-\gamma_n}
=\frac{c}{z-\gamma_1} \prod_{m=2}^\infty
\frac{1-z/\lambda_m}{1-z/\gamma_n} \end{equation} for every $z$ in
$\CC\setminus \Gamma$, where again $e=(e_n)$ is the vector such that
$H_{(\Gamma,v);(\Lambda,w)}e=(1,0,0,...)$. Indeed, the expression on
the left-hand side can have zeros only at the points $\lambda_m$ for
$m>1$, since $\Lambda$ is assumed to be an exact uniqueness sequence
for $H_{(\Gamma,v)}$. From \eqref{partfrac} we obtain
\[ |e_n|^2v_n^2 \simeq
\frac{|\lambda_n-\gamma_n|^2}{|\lambda_n|^2}\varrho_n,
\] and, therefore, using
\eqref{nu} and \eqref{simplew}, we obtain
\begin{equation}\label{nuest}\nu_n\simeq w_n \varrho_n.
\end{equation}

On the other hand, differentiating \eqref{partfrac} at
$z=\lambda_n$, we get
\[
\left| \sum_{l=1}^\infty \frac{e_l
v_l}{(\lambda_n-\gamma_l)^2}\right| \simeq
\frac{|\gamma_n|}{|\lambda_n|^2 |\lambda_n-\gamma_n|}
\prod_{m=1}^{n-1} \frac{|\gamma_m|}{|\lambda_m|}.\] Thus using
\eqref{varpi} and again \eqref{simplew}, we obtain
\begin{equation}\label{varpiest} \varpi_n\simeq v_n \varrho_n^{-1}.\end{equation}

To simplify the writing,  we set
\[ V^{(\varrho,0)}_n=\sum_{m=1}^{n-1} v_m \varrho^{-1}_m \ \ \
\text{and} \ \ \ P^{(\varrho,0)}_n=\sum_{m=n+1}^{\infty} v_m
|\lambda_m|^{-2}\varrho_m{-1} \] as well as
\[ W^{(\varrho,0)}_n=\sum_{m=1}^{n-1} w_n \varrho_n \ \ \
\text{and} \ \ \ Q^{(\varrho,0)}_n=\sum_{m=n+1}^{\infty} w_n
\varrho_n|\lambda_n|^{-2}.
\]
By Theorem~\ref{th13} and Theorem~\ref{th1}, we must have $\sup_n
V_n^{(\varrho,0)} Q_n^{(\varrho,0)} <\infty$; we will now show that
the estimate in part (0) is a consequence of this condition.

We set $n_1=2$ and define $n_j$ inductively by requiring
$V_{n_{j+1}-1}/V_{n_j}<2 \le V_{n_{j+1}}/V_{n_j}$. By
\eqref{lembasic1} of Lemma~\ref{varrhoest} and the uniform
boundedness of $V_nQ_n$, it follows that there are constants $c$ and
$C$ such that $c<\varrho_{n}/\varrho_{m} \le C$ when $n$ and $m$
both lie in the interval $[n_j, n_{j+1}]$. Hence we have
\begin{equation}\label{Vrhoest}
V_{n_j}^{(\varrho,0)}\simeq \sum_{l=1}^{j}V_{n_l}
\varrho_{n_l}^{-1}.
\end{equation}
Now if \begin{equation} \label{onone} Q_{n_j}-Q_{n_{j+1}} \ge
\frac{\varepsilon}{V_{n_{j+1}}},
\end{equation} then our condition $\sup_n
V_n^{(\varrho,0)} Q_n^{(\varrho,0)} <\infty$ and \eqref{Vrhoest}
imply that there exists a constant $C$ such that
\begin{equation}\label{keyest} \sum_{l=1}^{j} V_{n_l} \varrho_{n_l}^{-1}
\le C V_{n_{j+1}} \varrho_{n_{j+1}}^{-1}. \end{equation} If, on the
other hand, we have
\[ Q_{n_j}-Q_{n_{j+1}} < \frac{\varepsilon}{V_{n_{j+1}}}, \]
then an application of \eqref{localize} of Lemma~\ref{varrhoest}
gives $ \varrho_{n_{j+1}}/\varrho_{n_j} \le 5/4$ if $\varepsilon$ is
sufficiently small. Hence we have
\[ \frac{V_{n_{j+1}}\varrho_{n_{j}}}{V_{n_j}\varrho_{n_{j+1}}} \ge
\frac{8}{5},
\]
which means that $V_{n_j}\varrho_{n_j}^{-1}$ increases exponentially
on any set of consecutive integers $j$ for which \eqref{onone}
fails. Combining \eqref{keyest} with the latter estimate, we
therefore get that \[ \sum_{l=1}^{j} V_{n_l} \varrho_{n_l}^{-1} \le
\left(\frac{5}{8}C+\frac{8}{3}\right) V_{n_{j+1}}
\varrho_{n_{j+1}}^{-1}\] when \eqref{onone} fails and $\varepsilon$
is sufficiently small. Thus \eqref{keyest} holds for every index $j$
if the constant $C$ is suitably adjusted. Hence, by part (i) of
Lemma~\ref{simplefacts}, there exists a constant $C$ such that
\[ \frac{\varrho_{n_{j+l}}}{\varrho_{n_j}} \le C
\frac{V_{n_{j+l}}}{V_{n_j}} 2^{-\delta l} \le C
\left(\frac{V_{n_{j+l}}}{V_{n_j}}\right)^{1-\delta/2},\] where in
the last step we used that $V_{n_{j+1}}/V_{n_j}\le 4$ for
sufficiently large $j$. We are done since it suffices to establish
\eqref{boundrho} for $n=n_j$ and $m=n_{j+l}$.

(1) \emph{$\Lambda$ is a $v$-perturbation of $\Gamma$ of deficiency
$1$.} As in the previous case, we begin by finding estimates for the
weight sequences $\nu=(\nu_n)$ and $\varpi=(\varpi_j)$ appearing in
Theorem~\ref{th13}. If $\Lambda$ is a $v$-perturbation of $\Gamma$
of deficiency $1$ and an exact uniqueness sequence for
$H_{(\Gamma,v)}$, then there is a constant $c$ such that
\[ \sum_{n=1}^\infty \frac{e_n v_n}{z-\gamma_n}
=\frac{c}{(z-\gamma_1)(z-\gamma_2)} \prod_{m=3}^\infty
\frac{1-z/\lambda_m}{1-z/\gamma_n} \] for every $z$ in $\CC\setminus
\Gamma$, where again $e=(e_n)$ is the vector such that
$H_{(\Gamma,v);(\Lambda,w)}e=(1,0,0,...)$. Arguing in the same way
as in the preceding case, we obtain from this relation the estimates
\begin{equation}\label{nuest2}\nu_n\simeq w_n \varrho_n |\gamma_n|^{-2}
\end{equation}
and
\begin{equation}\label{varpiest2} \varpi_n\simeq v_n \varrho_n^{-1}|\gamma_n|^{2}.
\end{equation}

We now set
\[ V^{(\varrho,1)}_n=\sum_{m=1}^{n-1} v_n |\gamma_n|^{2} \varrho^{-1}_n \ \ \
\text{and} \ \ \ P^{(\varrho,1)}_n=\sum_{m=n+1}^{\infty}
v_n\varrho_n \] as well as
\[ W^{(\varrho,1)}_n=\sum_{m=1}^{n-1} w_n |\gamma_n|^{-2} \varrho_n \ \ \
\text{and} \ \ \ Q^{(\varrho,1)}_n=\sum_{m=n+1}^{\infty}
w_n|\gamma_n|^{-4} \varrho_n.
\]
By Theorem~\ref{th13} and Theorem~\ref{th1}, we must have $\sup_n
W_n^{(\varrho,1)} P_n^{(\varrho,1)} <\infty$; we will now show that
also the estimate in part (1) is a consequence of this condition.

We let the sequence $(n_j)_j$ be as above and find that
\begin{equation}\label{dualest} P_{n_j}^{(\varrho,1)} \simeq
 \sum_{l=j+1}^{\infty} V_{n_l} \varrho_{n_l}^{-1}
\end{equation}
whenever $j\ge 1$. Now if
\begin{equation} \label{onone2} Q_{n_{j+1}}-Q_{n_{j}} \ge
\frac{\varepsilon}{V_{n_{j+1}}},
\end{equation} then it follows from the condition $\sup_n
W_n^{(\varrho,1)} P_n^{(\varrho,1)} <\infty$ and \eqref{dualest}
that
\begin{equation}\label{keyest2}
\sum_{l=j+1}^{\infty} V_{n_l} \varrho_{n_l}^{-1} \lesssim V_{n_j}
\varrho_{n_j}^{-1}. \end{equation} As in the preceding case, we find
that, if $\varepsilon$ is sufficiently small, then
$V_{n_j}\varrho_{n_j}^{-1}$ increases exponentially on any set of
consecutive integers $j$ for which \eqref{onone2} fails. The
relation \eqref{dualest} implies that no such set is infinite; thus
there is an infinite sequence of indices $n_j$ for which
\eqref{keyest2} holds, and there must in fact be a uniform bound on
the number of points found in any set of consecutive integers $j$
for which \eqref{onone2} fails. We may infer from this argument that
in fact \eqref{keyest2} holds for every index $n_j\ge 1$. Finally,
we invoke part (ii) of Lemma~\ref{simplefacts}, which implies that
there is a constant $C$ such that
\[ \frac{\varrho_{n_{j+l}}}{\varrho_{n_j}} \ge C
\frac{V_{n_{j+l}}}{V_{n_j}} 2^{\delta l} \ge C
\left(\frac{V_{n_{j+l}}}{V_{n_j}}\right)^{1+\delta},\] and we are
done since it suffices to establish \eqref{boundrho2} for $n=n_j$
and $m=n_{j+l}$.

\subsection{Proof of the sufficiency of the conditions in
Theorem~\ref{y_int_th}}

We begin by noting that the condition $\sup_n V_n Q_n<\infty$
implies that $H_{(\Gamma,v);(\Lambda,w)}$ is a bounded
transformation. Indeed, \eqref{trivial} in Theorem~\ref{th1} holds
trivially when $\mu=\sum_n w_n\delta_{\lambda_n}$. We also have
\[ W_n \lesssim \frac{|\gamma_n|^2}{V_n} \ \ \ \text{and} \ \ \
P_n \lesssim \frac{v_n}{|\gamma_n|^2} \] by the assumptions that
$v_n=o(V_n)$ and $\sup_n V_n Q_n <\infty$. Therefore,
Theorem~\ref{th1} allows us to conclude that
$H_{(\Gamma,v);(\Lambda,w)}$ is a bounded transformation.

We will now use Theorem~\ref{th13} and show that the respective
conditions in part (0) and part (1) in Theorem~\ref{y_int_th} imply
those in Theorem~\ref{th13}. The sequence $(n_j)_j$ will be the same
as in the previous subsection.

(0) \emph{$\Lambda$ is an exact $v$-perturbation of $\Gamma$.} We
already know from Lemma~\ref{unique1} that if $\Lambda$ is an exact
$v$-perturbation of $\Gamma$, then $\Lambda$ is a uniqueness
sequence for $H_{(\Gamma,v)}$. To check that $\Lambda$ is in fact an
exact uniqueness sequence for $H_{(\Gamma,v)}$, we note that we may
write
\[
\frac{c}{z-\gamma_1} \prod_{m=2}^\infty
\frac{1-z/\lambda_m}{1-z/\gamma_n}=\sum_{n=1}^\infty \frac{a_n
v_n}{z-\gamma_n}+ h(z),
\]
where $h$ is an entire function and \[ |a_n|^2 v_n \simeq
\frac{w_n}{|\gamma_n|^2} \varrho_n. \] By the assumption that
$\sup_n V_n Q_n <\infty$, we have
\[ \sum_{n=1}^\infty |a_n|^2 v_n \lesssim \sum_{j=1}^\infty
\frac{\varrho_{n_j}}{V_{n_j}}, \] which, in view of
\eqref{boundrho}, implies that $(a_n)$ is in $\ell^2_v$. In
particular, we then have
\[ |h(z)|^2 \lesssim \frac{\varrho_n}{|z|^2} + \frac{V_n}{|z|^2}
\]
when $z$ is in $D_n(v; M)$ with $M$ sufficiently large. Thus
$h(z)\to 0$ when $z\to \infty$ which means that $h\equiv 0$.

It remains only to verify that $H_{(\Lambda,\varpi);(\Gamma,\nu)}$
is a bounded transformation. By Theorem~\ref{th1}, we need to show
that we have both $\sup_n W_n^{(\varrho,0)}P_n^{(\varrho,0)} <\infty
$ and $\sup_n V_n^{(\varrho,0)}Q_n^{(\varrho,0)}<\infty$. To this
end, we note that since $\varrho_n$ can only grow sub-exponentially,
we have $\sup_n W_n^{(\varrho,0)}P_n^{(\varrho,0)} <\infty $ by the
same argument that gave $\sup_n W_n P_n <\infty$. Since $\sup_n V_n
Q_n <\infty$, we have
\[ V_n^{(\varrho,0)}Q_n^{(\varrho,0)} \lesssim \sum_{n_j<n}
\frac{V_{n_j}}{\varrho_{n_j}} \frac{\varrho_n}{V_n}; \] here the
right-hand side is uniformly bounded whenever \eqref{boundrho}
holds.

(1) \emph{$\Lambda$ is a $v$-perturbation of $\Gamma$ of deficiency
$1$.} In view of Lemma~\ref{unique3}, we will have that $\Lambda$ is
an exact uniqueness sequence for $H_{(\Gamma,v)}$ if we can show
that there is no nonzero $a$ in $\ell^2_v$ such that
$H_{(\Gamma,v)}a$ vanishes on $\Lambda$. To show this, we assume to
the contrary that such a sequence $a$ exists. Then there is a
constant $c$ such that
\begin{equation} \label{formula}\sum_{n=1}^\infty \frac{a_n v_n}
{z-\gamma_n}= \frac{c}{z-\gamma_1}
 \prod_{m=2}^\infty
\frac{1-z/\lambda_m}{1-z/\gamma_n}.  \end{equation} By estimating
each side of \eqref{formula} for $z$ in $D_n(v; M)$ with $M$
sufficiently large, we get \[ V_n \sum_{m=1}^\infty |a_m|^2 \gtrsim
\varrho_n.
\] But this is a contradiction, because \eqref{boundrho2} implies that
$\varrho_n/V_n$ is an increasing sequence.

It remains only to verify that $H_{(\Lambda,\varpi);(\Gamma,\nu)}$
is a bounded transformation. To this end, we note that $\sup_n
V_n^{(\varrho,1)}Q_n^{(\varrho,1)} <\infty $ holds trivially because
$1/\varrho_n$ can only grow sub-exponentially, while
\[ W_n^{(\varrho,1)}P_n^{(\varrho,1)} \lesssim \sum_{n_j<n}
\frac{P_{n_j}}{\varrho_{n_j}} \frac{\varrho_n}{P_n}, \] which is
uniformly bounded when \eqref{boundrho2} holds.

\end{document}